\tikzset{curve/.style={settings={#1},to path={(\tikztostart)
    .. controls ($(\tikztostart)!\pv{pos}!(\tikztotarget)!\pv{height}!270:(\tikztotarget)$)
    and ($(\tikztostart)!1-\pv{pos}!(\tikztotarget)!\pv{height}!270:(\tikztotarget)$)
    .. (\tikztotarget)\tikztonodes}},
    settings/.code={\tikzset{quiver/.cd,#1}
        \def\pv##1{\pgfkeysvalueof{/tikz/quiver/##1}}},
    quiver/.cd,pos/.initial=0.35,height/.initial=0}
\tikzset{between/.style n args={2}{/tikz/spath/at end path construction={
    \tikzset{spath/split at keep middle={current}{#1}{#2}}
}}}
\tikzset{tail reversed/.code={\pgfsetarrowsstart{tikzcd to}}}
\tikzset{2tail/.code={\pgfsetarrowsstart{Implies[reversed]}}}
\tikzset{2tail reversed/.code={\pgfsetarrowsstart{Implies}}}
\tikzset{no body/.style={/tikz/dash pattern=on 0 off 1mm}}
\numberwithin{equation}{section} 
\numberwithin{figure}{section} 
\newcommand{\K}{\mathcal{K}}
\newcommand{\talg}{T\mbox{-}alg}
\newcommand{\N}{\mathcal{N}}
\newcommand{\ob}{\text{Ob}}
\newcommand{\M}{\mathcal{M}}
\newcommand{\Nn}{\mathcal{N}}
\newcommand{\Cat}{\mathbf{Cat}}
\newcommand{\id}{\text{id}}
\newcommand{\la}{\langle}
\newcommand{\ra}{\rangle}
\newcommand{\clevertheorem}[3]{
  \newtheorem{#1}[thm]{#2}
  \crefname{#1}{#2}{#3}
}
\theoremstyle{plain} 
\newtheorem{thm}{Theorem}[section]
\crefname{thm}{Theorem}{Theorems}
\newtheorem*{thm*}{Theorem}
\newtheorem*{prop*}{Proposition}
\theoremstyle{definition} 
\theoremstyle{remark} 
\let\c@equation\c@thm\makeatother
\let\c@figure\c@thm\makeatother
\crefname{figure}{Figure}{Figures}
\crefname{equation}{Diagram}{Diagrams} 
\crefname{eq}{Display}{Displays}
\crefname{eqn}{Display}{Displays}
\subjclass[2020]{18N10}
\title{Coherence for pseudo commutative monads}
\author{Diego Manco}
\address{Department of Mathematics, University  of Western Ontario,
       London, ON, N6A 3K7, Canada.}
\email[]{dmanco@uwo.ca} 
\subjclass[2020]{Primary 18N15, 18M65; Secondary 18N10, 18F25.}
\date{\today}
\begin{document}

\begin{abstract}
We prove that the free algebra functor associated to a symmetric, pseudo commutative 2-monad, from the underlying symmetric monoidal 2-category to the 2-category of algebras and pseudo maps over the 2-monad can be enhanced to a multifunctor. Furthermore, we prove that this multifunctor is pseudo symmetric. Our proof implies coherence results for both symmetric and non-symmetric pseudo commutative 2-monads conjectured by Hyland and Power.
\end{abstract}
\maketitle
\tableofcontents
\section{Introduction}
In category theory, the theory of monads \cite{ML98,BW85,S72} was developed  as a way to describe and study algebraic structures in the spirit of universal algebra, with a monad having an associated category of algebras that we want to understand. Several interesting enhancements have been proposed since they appeared. On the one hand, one can consider monads $T\colon \mathcal{V}\to \mathcal{V}$ on a symmetric monoidal closed category $\mathcal{V}$. When the monad $T$ is strong, it can be considered as a monoidal functor in two ways. If these two ways agree, the monad is said to be commutative (equivalently it is a symmetric monoidal monad) \cite{K70,K71,K72}. Under mild conditions on  $\mathcal{V}$ and $T$ \cite[p. 349]{K74}, \cite[p. 419]{K71}, the category of $T$-algebras (and strict maps), $\talg_s$ is symmetric monoidal closed. Thus, in this case, one has internal homs, a tensor product, and as a biproduct of this one has a multicategory structure which will be our focus. On the other hand, one can consider an enrichment in a 2-category which leads to the concept of 2-monads \cite{L02,BKP89,S07}. In addition to algebras, 2-monads also have pseudo algebras, and between these we can have strict maps and pseudo maps, which are of special interest to us. We will call $\talg$ the category of algebras and pseudo maps of a 2-monad $T$. 

By merging the previous points of view one is led to the definition of a strictly commutative 2-monad over a 2-symmetric monoidal category. At least when $T\colon\Cat\to \Cat$, such a monad generates a symmetric monoidal 2-category $\talg_s$ \cite{K74}. However, to include some important examples of 2-monads $T:\Cat\to \Cat$, such as the monad for symmetric monoidal categories, and because usually one wants to focus in $\talg$ as opposed to $\talg_s,$ Hyland and Power \cite{HP02} defined, after \cite{K74}, the concept of a strong, pseudo commutative 2-monad in a monoidal 2-category $T:\K\to\K.$ These are 2-monads which are commutative only up to coherent isomorphisms in a precise sense. Just like Hyland and Power in \cite{HP02}, we will take $\K$ to be a 2-category with finite products to avoid writing any associators, but we claim that what we do also holds for a general monoidal 2-category. Among other things, it is proven in \cite{HP02} that $\talg$ is a $\Cat$-enriched multicategory, and that it is pseudo closed as a  2-category. In the case that $T$ satisfies the further condition of being symmetric, our first result is that the free algebra construction $T\colon \K\to\talg$ preserves multilinear maps. 
\begin{thm}\normalfont{(\Cref{maintheoremchapter1})}
If $T$ is a symmetric, pseudo commutative, strong 2-monad, $T\colon \K\to \talg$ is a multifunctor.
\end{thm}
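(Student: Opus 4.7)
The plan is to construct the action of the multifunctor on multi-homs via iterated strengths of $T$, and then verify the multifunctor axioms. Since $\K$ has finite products, its multicategory structure is cartesian, with $\K(X_1,\ldots,X_n;Y) = \K(X_1 \times \cdots \times X_n, Y)$. On $\talg$, the Hyland--Power multicategory structure has as 1-cells the pseudo-multilinear maps, i.e.\ 1-cells $A_1 \times \cdots \times A_n \to B$ equipped with a coherent pseudo-algebra structure in each variable.

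On objects the assignment is $X \mapsto TX$. To a multimap $f \colon X_1 \times \cdots \times X_n \to Y$ in $\K$ I would associate the 1-cell $T(f) \circ \psi_n$, where $\psi_n \colon TX_1 \times \cdots \times TX_n \to T(X_1 \times \cdots \times X_n)$ is the canonical ``lax monoidal'' map built by iterating the strength of $T$ in a fixed bracketing, and the 2-cell action is simply given by $T$. The multilinear structure (pseudo-algebra data in each variable) is then extracted from the strength 2-cells together with the pseudo-commutativity isomorphisms; one must check that the resulting coherence 2-cells satisfy the axioms for a pseudo-multilinear map, which amounts to unpacking the definitions of a strong, pseudo commutative 2-monad.

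Next I would verify the multifunctor axioms. Preservation of identities is immediate since $\psi_1 = \id_{TX}$, so $T(\id_X) \circ \psi_1 = \id_{TX}$ with trivial coherence. For composition one must compare the image of a composed multimap $f \circ_i (g_1,\ldots,g_k)$ with the multicategorical composite in $\talg$ of the separate images. Composition in $\talg$ involves glueing the underlying 1-cells together, postcomposing with the multiplication $\mu$, and using the pseudo-algebra structures of the inputs; since for free algebras these structures come from the strength, the comparison reduces to a diagram of iterated strengths, $\mu$, and $T$ applied to the given maps.

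The main obstacle is exactly this composition coherence check: showing that two canonical maps $TX_{1,1} \times \cdots \times TX_{k,n_k} \to TY$ built from the strength, multiplication, and $T$ of the given data coincide together with their multilinear 2-cells. This relies essentially on the pseudo commutativity axioms, which control the interaction of $\mu$ with iterated strengths. I expect the relevant diagrams to decompose into instances of the pseudo commutativity hexagons of Hyland--Power, with the symmetry hypothesis on $T$ justifying that the construction is independent of the chosen bracketing. The unit axioms for the strength and for the monad unit should handle the boundary cases.
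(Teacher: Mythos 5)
Your proposal follows essentially the same route as the paper: your $\psi_n$ is the paper's $\omega_n$ (built by iterating the strength, with $\omega_2=\mu\circ Tt_2\circ t_1$ and the multilinear 2-cells supplied by $\Gamma$), the multifunctor is $f\mapsto Tf\circ\omega_n$ on hom-categories, and the composition axiom reduces, as you predict, to the pseudo commutativity and modification axioms via the key lemmas that $\omega$ is a multilinear 1-cell, is associative, and is 2-natural. The only small correction is that symmetry is not merely about independence of bracketing: in the paper it is what identifies $\omega_2$ with a whiskering by $\Gamma^{-1}$ (Lemma~\ref{omega2}), and that identification is what makes the $\eta$-, $\mu$-, and coherence-axiom checks for $\omega$ go through.
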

When $T$ is a symmetric, pseudo commutative, strong 2-monad, $\talg$ is a symmetric $\Cat$-enriched multicategory \cite{HP02}. The free algebra multifunctor $T:\K\to\talg $ is not symmetric since it doesn't preserve the action of the symmetric group on multilinear maps by swapping inputs. However, we prove that it does so up to coherent isomorphisms. Multifunctors with this property are called pseudo symmetric and they were defined by Yau in his study of inverse $K$-theory \cite{Y23}. A coherence result for these was proven by the author \cite{M23}. So far, the only example in the literature of a pseudo symmetric multifunctor is provided by Mandell's inverse $K$-theory \cite{Mandell10,Y23}.  The following is the main theorem in this paper.
\begin{thm}\normalfont{(\Cref{maintheoremchapter2})}
If $T$ is a symmetric pseudo commutative strong 2-monad, $T\colon \K\to\talg$ is a pseudo symmetric multifunctor.
\end{thm}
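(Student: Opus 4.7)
The plan is to construct the pseudo-symmetry 2-cells directly from the pseudo commutativity structure of $T$, and then to reduce the coherence axioms for a pseudo symmetric multifunctor (in the sense of \cite{Y23,M23}) to the coherence axioms for pseudo commutativity of \cite{HP02}. The starting point is the explicit description of the multifunctor structure used in \Cref{maintheoremchapter1}: an $n$-linear map $f\colon X_1,\ldots,X_n\to Y$ in $\K$ is sent to the composite of an $n$-ary double strength $\hat t_n\colon TX_1\times\cdots\times TX_n\to T(X_1\times\cdots\times X_n)$, built by a fixed iteration of the left and right strengths of $T$, followed by $Tf$. Pseudo commutativity of $T$ provides canonical invertible 2-cells between any two such iterations.

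Concretely, for $\sigma\in\Sigma_n$, the $n$-linear maps $T(\sigma^*f)$ and $\sigma^*T(f)$ differ only in the order in which the strengths have been iterated (and in a permutation of the factors in $\K$, which is harmless because $\K$ is symmetric monoidal). I would define the pseudo symmetry constraint $T_\sigma$ as the pasting of pseudo commutativity 2-cells that reassociates one iteration of strengths into the other. For $\sigma$ an adjacent transposition this is a single instance of the pseudo commutativity 2-cell; for a general $\sigma$ one writes $\sigma$ as a product of adjacent transpositions and pastes the resulting 2-cells. The fact that $T_\sigma$ does not depend on the chosen word in adjacent transpositions already requires coherence of pseudo commutativity, and will follow from axiom (c) of \cite{HP02}, which is precisely the braid-type compatibility between two applications of $\gamma$.

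Next I would verify the three axioms of a pseudo symmetric multifunctor: the unit axiom $T_{\op{id}}=\op{id}$, which is immediate from the construction; the multiplicativity axiom $T_{\sigma\tau}=(\sigma^*T_\tau)\cdot T_\sigma$, which reduces to a comparison of two pastings of $\gamma$'s and follows from the well-definedness discussed above; and the top and bottom equivariance axioms controlling the interaction of $T_\sigma$ with multicategorical composition. Top equivariance, which handles postcomposition with an outer multilinear map, reduces quickly to naturality of the strengths and of $\gamma$ together with Hyland--Power axiom (a). Bottom equivariance, which handles substitution of multilinear maps into individual slots and block permutations, is the combinatorially heaviest step and will be the main obstacle.

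The hard part will be precisely this bottom equivariance axiom. Unpacking the definition of composition in the multicategory $\talg$ given in \cite{HP02}, both sides are long pastings of iterated strengths, pseudo commutativity 2-cells, and the algebra structure 2-cells of $TY$; one needs that the two pastings agree. I would attack this by induction on the arities of the inner multilinear maps, reducing each inductive step to Hyland--Power axioms (a) and (b) (compatibility of $\gamma$ with the strengths and with the monad multiplication) together with naturality. An alternative, cleaner route, is to bypass these pasting diagrams by invoking a coherence principle for pseudo commutative 2-monads: any two 2-cells between parallel 1-cells that are built from strengths, multiplications, units and $\gamma$ are equal, which is the content of the coherence conjecture of \cite{HP02} whose proof is implied by the present results. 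Either path yields the axiom and hence the theorem.
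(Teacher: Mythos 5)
Your overall architecture is the same as the paper's: define $T_{\sigma_i}$ for adjacent transpositions by whiskering with the pseudo commutativity $\Gamma$, extend to all of $\Sigma_n$ via words in the generators, and reduce the unit, product, and equivariance axioms to the Hyland--Power axioms by induction on arities. One step, however, would fail as stated. Well-definedness of $T_\sigma$ over the presentation of $\Sigma_n$ does \emph{not} follow from the braid relation alone, as you claim: you must also verify the far-commutation relation $\sigma_i\sigma_j=\sigma_j\sigma_i$ for $|i-j|>1$ and, crucially, the involution relation $\sigma_i^2=\id$. The latter is exactly where the symmetry axiom $\Gamma_{A,B}=\Gamma_{B,A}^{-1}$ of the 2-monad enters, and it cannot be derived from the pseudo commutativity axioms: the paper points out that the braid and far-commutation relations hold even for non-symmetric pseudo commutative monads, which is why in the non-symmetric case one only gets a functor out of the weak Bruhat order rather than out of $E\Sigma_n$. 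As written, your $T_\sigma$ is well-defined on words in the generators but not on permutations, and the single place where the symmetry hypothesis is indispensable goes unused.

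Two smaller remarks. You have the relative difficulty of the two equivariance axioms reversed compared with how the paper organizes things: it is the \emph{top} equivariance axiom (for the block permutation $\sigma\la\id_{k_{\sigma(j)}}\ra$) that requires the double induction on the arities of adjacent blocks, while bottom equivariance is comparatively immediate; your proposed induction-on-arities technique is the right one, just attached to the wrong axiom. Also, your ``alternative, cleaner route'' of invoking the Hyland--Power coherence statement is circular, as you half-acknowledge: that coherence result is a consequence of the present theorem, not an available input.
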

Our proof implies a coherence result for pseudo commutative monads originally conjectured by Hyland and Power \cite[Theorem 4]{HP02},  even in the absence of symmetry as we explain in Remark \ref{hpcoherence}.  

Our results apply to all of the following. Examples of 2-monads $T\colon\Cat\to\Cat$ that are symmetric pseudo commutative but not commutative include the monad for symmetric monoidal categories, the monad for permutative categories, the monads associated to the symmetric pseudo commutative operads defined in \cite{CG13,GMMO21}, and  also considered in  \cite{Y24},  chaotic $\Cat$-operads \cite{GMMO21}, and $KZ$-monads \cite{L11}, also known as lax idempotent 2-monads, which include monads whose algebras are categories equipped with a given class of colimits (or limits). Examples of 2-monads that are not symmetric but are pseudo commutative include the monad for braided stric monoidal categories, which has two pseudocommutative structures, neither of which are symmetric \cite{CG13}, as well as the operads in \cite[Theorem 4.4]{CG13}. Although the main theorems don't apply to these, our proofs, and in particular \Cref{hpcoherence} do.

In their definition of the multiplicative equivariant $K$-theory multifunctor in \cite{GMMO21} the free algebra multifunctor for certain symmetric, pseudo commutative, strong 2-monads is considered. Our result can be understood as a step towards proving that this version of equivariant $K$-theory is pseudo symmetric and thus, preserves multiplicative structures in the sense of \cite{M23}. On the other hand, another multiplicative $K$-theory multifunctor was defined by Yau in \cite{Y24}, and our result can play a part in proving that these two machines are equivalent. Notice that one could view our result as a coherence theorem for pseudo symmetric monoidal 2-monads, so our result suggests  the use of the underlying multicategorical structure to express coherence results about general lax symmetric monoidal functors or similar gadgets (where one relaxes symmetry). It is desirable to have a graphic calculus for symmetric pseudo commutative 2-monads that also works for pseudo and lax morphisms of symmetric monoidal 2-categories, our work can contribute to this end.

It is worth mentioning the work of Bourke which explores the question of when one can recover the multicategorical structure in $\talg$ from a symmetric monoidal 2-categorical structure. Specializing to the case $T:\Cat\to \Cat$, Bourke proves that the 2-multicategory $\talg$ is skew \cite{BL20}, and so it admits a skew monoidal 2-category structure \cite{BL18}. In the case that $T\colon \Cat \to \Cat$ is accessible, the multicategory structure in $\talg$ can be seen to arise from a symmetric bicategorical structure on $\talg$ \cite{B17,BL20}. 

\textbf{Outline}: In \Cref{Section3.1}, we define symmetric, pseudo commutative, strong 2-monads $T\colon \to \K$ following \cite{HP02} as well as the $\Cat$-enriched multicategory $\talg.$ When  $T$ is symmetric we define $\talg$ as a symmetric $\Cat$-enriched multicateogry.  In \Cref{Section3.2} we extend the free $T$-algebra 2-functor $T:\K\to \K$ to a non-symmetric multifunctor $T:\K\to \talg$ . We finish by proving that this multifunctor $T:\K\to \talg$ is pseudo symmetric in \Cref{Section3.3}.  The full definitions of pseudo symmetric $\Cat$-enriched multifunctor and symmetric $\Cat$-enriched multicategory will be deferred to \Cref{appendix}.
\section{Symmetric pseudo commutative 2-monads}\label{Section3.1}
We will prove a coherence result for symmetric, pseudo commutative, strong 2-monads. We will assume that $\K$ is a 2-category with finite products which we will denote by $\times$, with $1$ denoting the empty product in $\K$.  We do this following \cite{HP02} as a way of justifying our suppression of the associators from our notation. However, we believe what we do to hold as well in any monoidal 2-category. We will denote by $\rho\colon 1\times {-}\to 1_\K$ and $\lambda\colon {-}\times 1\to 1_\K$ the natural isomorphisms comming from the monoidal structure in $\K$ induced by products.
\begin{defi}{\cite{K70}}
Suppose that $T\colon \K\to\K$ is a 2-functor. A \textit{strength} $t$ on $T$ is the data of a (strict) 2-natural transformation (see \cite{JY23}) with source
\begin{center}
    \begin{tikzcd}
        \K\times \K\ar[r,"1_\K\times T"]&\K\times \K\ar[r,"\times"]&\K,
    \end{tikzcd}
\end{center}
and target
\begin{center}
    \begin{tikzcd}
        \K\times \K\ar[r,"\times"]&\K\ar[r,"T"]&\K.
    \end{tikzcd}
\end{center}
The component of $t$ at $(A,B)\in\ob(\K\times \K),$ will be denoted by $t_{A,B}\colon A\times TB\to T(A\times B)$ or  just $t$ when there is no room for confusion. These data are required to satisfy the following axioms:
\begin{itemize}
    \item \textbf{Unity:} the triangle
    \begin{center}
        \begin{tikzcd}
        1\times TA\ar[rd,"\lambda"']\ar[r,"t_{1,A}"]&T(1\times A)\ar[d,"T\lambda"]\\
        &TA
        \end{tikzcd}
    \end{center}
    commutes for all $A\in \ob(\K).$
    \item \textbf{Associativity:} the triangle
    \begin{center}
        \begin{tikzcd}
            A\times B\times TC\ar[rd,"t_{A\times B,C}"']\ar[r,"1_A\times t_{B,C}"]&A\times T(B\times C)\ar[d,"t_{A,B\times C}"]\\
            &T(A\times B\times C)
        \end{tikzcd}
    \end{center}
    commutes for every $A,B\in\ob(\K).$
\end{itemize}
In this case we say that $T\colon \K\to\K$ is \textit{strong} with strength $t.$
\end{defi}
\begin{rmk}Suppose that $(T,\eta,\mu,t)$ is a strong 2-monad. The following notation is introduced in \cite{HP02}. For $n\geq 2,$ $t_i^n$ will denote the natural isomorphism having as component at $(A_1,\dots,A_n)\in\ob(\K^n),$ the 1-cell 
\begin{center}
    \begin{tikzcd}
        A_1\times \cdots \times A_{i-1}\times TA_i\times A_{i+1}\times  \cdots \times A_n\ar[d,"\cong"']\ar[r,"{t_i^n}_{A_1,\dots,A_n}"] &[10pt]T(A_1\cdots\times A_n)\\
        A_1\times\cdots\times A_{i-1}\times A_{i+1}\times\cdots\times A_n\times TA_i\ar[r,"t"']&T(A_1\times \cdots\times A_n\times A_i ).\ar[u,"T\cong"]
    \end{tikzcd}
\end{center}
We will denote ${t_i^n}_{A_1,\dots,A_n}=t_i$ when there is no room for confusion. Notice that $t=t_2^2.$ In \cite{HP02}, $t_1^2$ is also called $t^*.$ We will write our arrows in terms of $t_1^2$ and $t_2^2$ when possible. We notice that the associativity axiom implies that $t_i^n$ can be written in many ways using the $t_i^k$ for $k<n.$ For example, one can prove by induction that the ${t_{i}^n}_{A_1,\dots, A_n}$ can be written as
\begin{center}
\adjustbox{scale=0.85}{
    \begin{tikzcd}
        A_1\times \cdots \times A_{i-1}\times TA_i\times A_{i+1}\times  \cdots \times A_n \ar[r,"1\times t_1"] &[10pt]A_1\times \cdots\times A_{i-1}\times T(A_i\times \cdots\times A_n)\ar[r,"t_2"]&T(A_1\times\cdots\times A_n),
    \end{tikzcd}}
\end{center}
or as
\begin{center}
\adjustbox{scale=0.85}{
    \begin{tikzcd}
        A_1\times \cdots \times A_{i-1}\times TA_i\times A_{i+1}\times  \cdots \times A_n \ar[r,"t_2\times 1"]&[10pt]T(A_1\times \cdots\times A_i)\times A_{i+1}\cdots\times A_n\ar[r,"t_1"]& T(A_1\times\cdots\times A_n).
    \end{tikzcd}}
\end{center}
\end{rmk}
\begin{defi}\label{strong}
Let $(T,\eta,\mu,t)$ be a 2-monad with $T\colon\K\to\K$. That is, $T$ is a strict 2-functor and $\eta,\mu$ are strict 2-natural transformations satisfying the usual triangle identities (see \cite{JY23}). We say that $(T,\eta,\mu,t)$ is a \textit{strong 2-monad} with strength $t,$ if $T\colon \K\to\K$ is strong with strength $t$ as a 2-functor and $\eta, \mu$ and $t$ are compatible in the sense that, for every $A,B\in\ob(\K),$ the triangle
\begin{center}
    \begin{tikzcd}
        A\times B\ar[r,"1\times \eta"]\ar[rd,"\eta"']&A\times TB\ar[d,"t"]\\
        &T(A\times B)
    \end{tikzcd}
\end{center}
commutes, as well as the square
\begin{center}
    \begin{tikzcd}
        A\times T^2\ar[d,"t"'] B\ar[rr,"1\times \mu"]&&A\times TB\ar[d,"t"]\\
        T(A\times TB)\ar[r,"Tt"']&T^2(A\times B)\ar[r,"\mu"']&T(A\times B).
    \end{tikzcd}
\end{center}
\end{defi}
\begin{defi}{\cite{K70}}
    A strong 2-monad $(T,\eta,\mu,t)$ is called commutative when the following diagram commutes for every $A,B\in\ob(\K):$
    \begin{center}
        \begin{tikzcd}
            TA\times TB\ar[d,"t_2"']\ar[r,"t_1"]&T(A\times TB)\ar[r,"Tt_2"]&T^2(A\times B)\ar[d,"\mu"]\\
            T(TA\times B)\ar[r,"Tt_1"']&T^2(A\times B)\ar[r,"\mu"']&T(A\times B).
        \end{tikzcd}
    \end{center}
    
\end{defi}
\begin{rmk}
Suppose that $(T,\eta,\mu,t)$ is a strong 2-monad. Then, $T$ can be regarded as a monoidal 2-functor in two different ways. In each case, the unitary component is given by $\eta_1\colon 1\to T1.$ The binary components are given by the two 1-cells that form the boundary of the previous diagram. For each of these ways of seeing $T$ as a monoidal 2-functor, $\eta$ is a monoidal 2-natural transformation. It is proven in \cite{K70} that $T$ is commutative if and only if $T$ is a monoidal 2-monad (i.e., $\mu$ is a monoidal 2-natural transformation). 

There are a lot of examples of strong 2-monads which are non-commutative, but that are commutative up to coherent natural isomorphism, these are called pseudo commutative monads and we will defined them next.  The examples include the 2-monads $T\colon \Cat\to \Cat$ given by the free construction for symmetric stric monoidal categories,  symmetric monoidal categories, categories with finite products, categories with finite coproducts, etc. A longer list is included in \cite{HP02}. More examples come from pseudo commutative operads as defined by Corner and Gurski \cite{CG13}, and featured in \cite{GMMO21,Y24}. These are operads whose associated monads are pseudo commutative. Guillou, Merling, May and Osorno \cite{GMMO21} prove that chaotic operads are pseudo commutative.
\end{rmk}
\begin{defi}\label{pseudocomm}{\cite[Def. 5]{HP02}}
    A strong 2-monad $(T,\eta,\mu,t)$ is called \textit{pseudo-commutative} with \textit{pseudocommutativity} $\Gamma$ if there exists an invertible modification with components, for $A,B\in\ob(\K):$
    \begin{center}
        \begin{tikzcd}
            TA\times TB\ar[d,"t_2"']\ar[r,"t_1"]&T(A\times TB)\ar[r,"Tt_2"]&|[alias=D]|T^2(A\times B)\ar[d,"\mu"]\\
            |[alias=R]|T(TA\times B)\ar[r,"Tt_1"]&T^2(A\times B)\ar[r,"\mu"]&T(A\times B),
        \arrow[Rightarrow,from=D,to=R,"\Gamma_{A,B}",shorten >=15mm,shorten <=13mm]
        \end{tikzcd}
    \end{center}
    such that the following axioms are satisfied for all $A,B,C$ objects of $\K$. We will write $\Gamma$ instead of $\Gamma_{A,B}$ when $A$ and $B$ are clear from the context.
    \begin{enumerate}
        \item\label{axiomstrength1} $\Gamma_{A\times B,C}\circ({t_2}_{A,B}\times 1_{TC})={t_2}_{A,B\times C}\circ(1_A\times \Gamma_{B,C}),$ i.e., the following pasting diagram equality holds:
\begin{center}
\begin{tikzcd}
	{A\times TB\times TC} & {T(A\times B)\times TC} & {T(A\times B\times C)} \\[-20pt]
	& {||} \\[-20pt]
	{A\times TB\times TC} & {A\times T(B\times C)} & {T(A\times B\times C).}
	\arrow["{t_2\times 1}", from=1-1, to=1-2]
	\arrow[""{name=0, anchor=center, inner sep=0}, curve={height=-12pt}, from=1-2, to=1-3]
	\arrow[""{name=1, anchor=center, inner sep=0}, curve={height=12pt}, from=1-2, to=1-3]
	\arrow[""{name=2, anchor=center, inner sep=0}, curve={height=-12pt}, from=3-1, to=3-2]
	\arrow[""{name=3, anchor=center, inner sep=0}, curve={height=12pt}, from=3-1, to=3-2]
	\arrow["{t_2}", from=3-2, to=3-3]
	\arrow["\Gamma", between={0.2}{0.8}, Rightarrow, from=0, to=1]
	\arrow["{1\times \Gamma}", between={0.2}{0.8}, Rightarrow, from=2, to=3]
\end{tikzcd}
\end{center}
        \item\label{axiomstrength2} $\Gamma_{A,B\times C}\circ(1_{TA}\times {t_2}_{B,C})=\Gamma_{A\times B,C}\circ({t_1}_{A,B}\times 1_{TC}),$ i.e., the following equality holds:
        \begin{center}
\begin{tikzcd}
	{TA\times B\times TC} & {TA\times T(B\times C)} & {T(A\times B\times C)} \\[-20pt]
	& {||} \\[-20pt]
	{TA\times B\times TC} & {T(A\times B)\times TC} & {T(A\times B\times C).}
	\arrow["{1\times t_2}", from=1-1, to=1-2]
	\arrow[""{name=0, anchor=center, inner sep=0}, curve={height=-12pt}, from=1-2, to=1-3]
	\arrow[""{name=1, anchor=center, inner sep=0}, curve={height=12pt}, from=1-2, to=1-3]
	\arrow["{t_1\times 1}", from=3-1, to=3-2]
	\arrow[""{name=2, anchor=center, inner sep=0}, curve={height=-12pt}, from=3-2, to=3-3]
	\arrow[""{name=3, anchor=center, inner sep=0}, curve={height=12pt}, from=3-2, to=3-3]
	\arrow["\Gamma", between={0.2}{0.8}, Rightarrow, from=0, to=1]
	\arrow["\Gamma", between={0.2}{0.8}, Rightarrow, from=2, to=3]
\end{tikzcd}
        \end{center}
        \item\label{axiomstrength3} $\Gamma_{A,B\times C}\circ(1_{TA}\times {t_1}_{B,C})={t_1}_{A\times B, C}\circ( \Gamma_{A,B}\times 1_C),$ i.e., the following whiskering equality holds:
        \begin{center}
\begin{tikzcd}
	{TA\times TB\times C} &[10pt] {TA\times T(B\times C)} & {T(A\times B\times C)} \\[-20pt]
	& {||} \\[-20pt]
	{TA\times TB\times C} & {T(A\times B)\times C} & {T(A\times B\times C)}
	\arrow["{1\times t_1}", from=1-1, to=1-2]
	\arrow[""{name=0, anchor=center, inner sep=0}, curve={height=-12pt}, from=1-2, to=1-3]
	\arrow[""{name=1, anchor=center, inner sep=0}, curve={height=12pt}, from=1-2, to=1-3]
	\arrow[""{name=2, anchor=center, inner sep=0}, curve={height=-12pt}, from=3-1, to=3-2]
	\arrow[""{name=3, anchor=center, inner sep=0}, curve={height=12pt}, from=3-1, to=3-2]
	\arrow["{t_1}", from=3-2, to=3-3]
	\arrow["\Gamma", between={0.2}{0.8}, Rightarrow, from=0, to=1]
	\arrow["{\Gamma\times 1}", between={0.2}{0.8}, Rightarrow, from=2, to=3]
\end{tikzcd}
        \end{center}
        \item\label{axiometa1} $\Gamma_{A,B}\circ (\eta_A\times 1_{TB})$ is an identity 2-cell. That is, the following whiskering is an identity:
        \begin{center}
\begin{tikzcd}
	{A\times TB} & {TA\times TB} & {T(A\times B)}
	\arrow["{\eta\times 1}", from=1-1, to=1-2]
	\arrow[""{name=0, anchor=center, inner sep=0}, curve={height=-12pt}, from=1-2, to=1-3]
	\arrow[""{name=1, anchor=center, inner sep=0}, curve={height=12pt}, from=1-2, to=1-3]
	\arrow["\Gamma", between={0.2}{0.8}, Rightarrow, from=0, to=1]
\end{tikzcd}
        \end{center}
        \item\label{axiometa2} $\Gamma_{A,B}\circ (1_{TA}\times \eta_{B})$ is an identity 2-cell, that is, the following whiskering is an identity:
        \begin{center}
\begin{tikzcd}
	{TA\times B} & {TA\times TB } & {T(A\times B).}
	\arrow["{1\times \eta}", from=1-1, to=1-2]
	\arrow[""{name=0, anchor=center, inner sep=0}, curve={height=-12pt}, from=1-2, to=1-3]
	\arrow[""{name=1, anchor=center, inner sep=0}, curve={height=12pt}, from=1-2, to=1-3]
	\arrow["\Gamma", between={0.2}{0.8}, Rightarrow, from=0, to=1]
\end{tikzcd}
        \end{center}
        \item\label{axiommu2gamma} The whiskering
        \begin{center}
\begin{tikzcd}
	{T^2A\times TB} & {TA\times TB} & {T(A\times B)}
	\arrow["{\mu\times 1}", from=1-1, to=1-2]
	\arrow[""{name=0, anchor=center, inner sep=0}, curve={height=-12pt}, from=1-2, to=1-3]
	\arrow[""{name=1, anchor=center, inner sep=0}, curve={height=12pt}, from=1-2, to=1-3]
	\arrow["\Gamma", between={0.2}{0.8}, Rightarrow, from=0, to=1]
\end{tikzcd}
        \end{center}
        is equal to the pasting 
        \begin{center}
        \begin{equation}\label{pseudomu1}
            \begin{tikzcd}
                T^2A \times TB\ar[d,"t_2"']\ar[r,"t_1"]&[10pt]|[alias=D2]|T(TA\times TB)\ar[d,"Tt_2"]\ar[r,"Tt_1"]&[-10pt]T^2(A\times TB)\ar[r,"T^2 t_2"]&|[alias=D1]|T^3(A\times B)\ar[d,"T\mu"]\\
                T(T^2 A \times B)\ar[d,"Tt_1"']&|[alias=R1]|T^2(TA\times B)\ar[d,"\mu"]\ar[r,"T^2 t_1"]&T^3(A\times B)\ar[d,"\mu"]\ar[r,"T\mu"]&T^2(A\times B)\ar[d,"\mu"]\\
                |[alias=R2]|T^2(TA\times B)\ar[r,"\mu"']&T(TA\times B)\ar[r,"Tt_1"']&T^2(A\times B)\ar[r,"\mu"']&T(A\times B).
                \arrow[Rightarrow,from=D1,to=R1,"T\Gamma"',shorten=15mm]
                \arrow[Rightarrow,from=D2,to=R2,"\Gamma"',shorten=12mm]
            \end{tikzcd}
            \end{equation}
        \end{center}
        \item\label{axiommu1gamma} The whiskering
        \begin{center}
\begin{tikzcd}
	{TA\times T^2B} & {TA\times TB} & {T(A\times B)}
	\arrow["{1\times \mu}", from=1-1, to=1-2]
	\arrow[""{name=0, anchor=center, inner sep=0}, curve={height=-12pt}, from=1-2, to=1-3]
	\arrow[""{name=1, anchor=center, inner sep=0}, curve={height=12pt}, from=1-2, to=1-3]
	\arrow["\Gamma", between={0.2}{0.8}, Rightarrow, from=0, to=1]
\end{tikzcd}
        \end{center}
        is equal to the pasting 
        \begin{center}
        \begin{equation}\label{pseudomu2}
            \begin{tikzcd}
                TA\times T^2B\ar[d,"t_2"']\ar[r,"t_1"]&T(A\times T^2B)\ar[r,"Tt_2"]&|[alias=D1]|T^2(A\times TB)\ar[d,"\mu"]\\
                |[alias=R1]|T(TA\times TB)\ar[d,"Tt_2"']\ar[r,"Tt_1"']&|[alias=D2]|T^2(A\times TB)\ar[d,"T^2 t_2"]\ar[r,"\mu"']&T(A\times TB)\ar[d,"T t_2"]\\
                T^2(TA\times B)\ar[d,"T^2t_1"']&T^3(A\times B)\ar[d,"T\mu"]\ar[r,"\mu"]&T^2(A\times B)\ar[d,"\mu"]\\
                |[alias=R2]|T^3(A\times B)\ar[r,"T\mu"']&T^2(A\times B)\ar[r,"\mu"']&T(A\times B).
                \arrow[Rightarrow,from=D1,to=R1,"\Gamma"',shorten=15mm]
                \arrow[Rightarrow,from=D2,to=R2,"T\Gamma",shorten=12mm]
            \end{tikzcd}
        \end{equation}
        \end{center}
    \end{enumerate}
\end{defi}
\begin{rmk}
The fact that the source and target of the equal whiskering and pasting diagrams in the previous list of axioms are the same follows from the definition of 2-strong monad. In other words, the pseudo commutativity axioms don't introduce new relations among 1-cells.

The axioms are not independent as it is noted in \cite{HP02}. Any two of the axioms (\ref{axiomstrength1}), (\ref{axiomstrength2}), and (\ref{axiomstrength3}) implies the other, and symmetry (\Cref{Tsymmetric}) introduces further redundancies. 

A modification is more than a mere collection of 2-cells (see \cite{JY22}). For $\Gamma$ to be a modification we need that given $f\colon A\to A'$ and $g\colon B\to B'$ in $\K,$ the following equality of pasting diagrams holds:
\begin{center}
\begin{equation}\label{strongdiagram}
\adjustbox{scale=0.9}{
\begin{tikzcd}
    &[-40pt]TA\times TB\ar[dl,"t_1"']\ar[rr,"Tf\times Tg"]&[-40pt]&[-50pt]TA'\times TB'\ar[dr,"t_2"]\ar[dl,"t_1"']&[-50pt]&[-30pt]&[-30pt]&[-50pt]TA\times TB\ar[dl,"t_1"']\ar[dr,"t_2"]\ar[rr,"Tf\times Tg"]&[-40pt]&[-40pt]TA'\times TB'\ar[rd,"t_2"]&[-40pt]\\
    T(A\times TB)\ar[dd,"Tt_2"']\ar[rr,"T(f\times Tg)"]&&T(A'\times TB')\ar[dd,"Tt_2"{swap,name=D1}]&&T(TA'\times B')\ar[dd,"Tt_1"{name=R1}]&&T(A\times TB)\ar[dd,"Tt_2"{swap,name=D2}]&&T(TA\times B)\ar[rr,"T(Tf\times g)"]\ar[dd,"Tt_1"{name=R2}]&&T(TA'\times B')\ar[dd,"Tt_1"]\\[-10pt]
    &&&&&=\\[-10pt]
    T^2(A\times B)\ar[rd,"\mu"']\ar[rr,"T^2(f\times g)"]&&T^2(A'\times B')\ar[rd,"\mu"']&&T^2(A'\times B')\ar[dl,"\mu"]&&T^2(A\times B)\ar[dr,"\mu"]&&T^2(A\times B)\ar[rr,"T^2(f\times g)"]\ar[dl,"\mu"]&&T^2(A'\times B')\ar[dl,"\mu"]\\
    &T(A\times B)\ar[rr,"T(f\times g)"']&&T(A'\times B')&&&&T(A\times B)\ar[rr,"T(f\times g)"']&&T(A'\times B').
    \arrow[Rightarrow,from=D1,to=R1,"\Gamma",shorten <=5mm, shorten >=5mm]
        \arrow[Rightarrow,from=D2,to=R2,"\Gamma",shorten <=5mm, shorten >=5mm]
\end{tikzcd}}
\end{equation}
\end{center}
\end{rmk}
\begin{ex} We will introduce an example so that our reader has something to compare her intuitions with. Let $T\colon\Cat\to\Cat$ be the monad whose 2-category of algebras is the category of symmetric monoidal categories. For a small category $\mathcal{C},$ $\ob (T\mathcal{C})$ consists of finite sequences of elements of $\ob(C),$ including the empty sequence. The arrows of this category are freely generated by symmetries of the form $\mathcal{C}$ $$(a_{\sigma(1)}a_{\sigma(2)}\ldots a_{\sigma(n))}\xrightarrow{\sigma} (a_1a_2\ldots a_n)$$
for $a_1,\ldots, a_n\in \ob(\mathcal{C}),$ and $\sigma\in \Sigma_n,$ together with maps of the form $$(f_1,\dots, f_n)\colon (a_1\dots a_n)\to (b_1\dots b_n)$$
for $f_i\colon a_i\to b_i$ an arrow of $\mathcal{C}$ for $1\leq i\leq n.$ Thus, every arrow in $T\mathcal{C} $ is of the form $$(a_{\sigma(1)}\cdots a_\sigma(n))\xrightarrow{\sigma}(a_1\ldots a_n)\xrightarrow{(f_1,\dots, f_n)}(b_1\ldots b_n)\xrightarrow{\tau^{-1}}(b_{\tau(1)}\ldots b_{\tau(n)}),$$
for $n\in \mathbb{N},$ $\sigma,\tau \in \Sigma_n,$ and $f_i$ as before. We give $T\mathcal{C}$ the structure of a symmetric monoidal category with symmetric monoidal product concatenation, and unit the empty sequence. We make $T$ into a 2-monad by defining $\eta\colon\mathcal{C}\to T\mathcal{C}$ as sending $a$ to $(a)$, and $\mu\colon T^2\mathcal{C}\to TC$ by erasing parentheses. This 2-monad is strong with respect to the 2-natural transformation $t:\mathcal{A}\to T\mathcal{B}\to T(\mathcal{A}\times \mathcal{B})$ which sends $(a,b_1\ldots b_n)$ to $((a,b_1)(a,b_2)\ldots (a,b_n)).$ Notice that 
\Cref{strongdiagram} does not commute, and so, this monad is not commutative. One of the maps $T\mathcal{A}\times T\mathcal{B}\to T(\mathcal{A}\times \mathcal{B})$ sends the pair $(a_1\ldots a_n,b_1\dots b_m)$ to the sequence $$((a_1,b_1)(a_1,b_2)\ldots (a_1,b_m) (a_2,b_1)(a_2,b_2)\ldots (a_n,b_m) ),$$ while the other maps it to the sequence
$$((a_1,b_1)(a_2,b_1)\ldots (a_n,b_1),(a_1,b_2),(a_2,b_2),\ldots ,(a_n,b_m)).$$
Clearly, the two maps are not equal, but they commute up to a natural isomorphism given by a permutation. Furthermore the two permutations that exchange between one map and the other are inverses of each other, which means that this monad is an example of a symmetric pseudo commutative 2-monad, see \Cref{Tsymmetric}.  
\end{ex}
Following Blackwell, Kelly and Power \cite{BKP02}, we now define, for any 2-monad $T\colon \K\to\K$,  the 2-category $T\mbox{-}Alg$ of $T$-algebras and pseudo morphisms.
\begin{defi}\label{talg}{\cite[Def. 1.2]{BKP02}}
Let $(T,\eta,\mu )$ be a 2-monad. The 2-category $T\mbox{-}Alg$ has strict $T$-algebras as 0-cells. A 1-cell $f$ between $T$-algebras $(A,a\colon TA\to A)$ and $(B,b\colon TB\to B),$ also called a strong morphism of $T$-algebras in \cite{JY22}, consists of a 1-cell $f\colon A\to B$ in $\K,$ together with an invertible 2-cell
\begin{center}
    \begin{tikzcd}
        TA\ar[r,"Tf"]\ar[d,"a"]&|[alias=D1]|TB\ar[d,"b"]\\
        |[alias=R1]|A\ar[r,"f"']&B,
    \arrow[Rightarrow,from=D1,to=R1,"\overline{f}"',shorten=4mm]
    \end{tikzcd}
\end{center}
subject to the following axioms. 
\begin{enumerate}
    \item The equality of pasting diagrams
    \begin{center}
        \begin{tikzcd}
            T^2A\ar[d,"\mu"']\ar[r,"T^2f"]&T^2B\ar[d,"\mu"]&[-20pt]&[-20pt]T^2A\ar[d,"Ta"']\ar[r,"T^2f"]&|[alias=D2]|T^2B\ar[d,"Tb"]\\
            TA\ar[d,"a"']\ar[r,"Tf"]&|[alias=D1]|TB\ar[d,"b"]&=&|[alias=R2]|TA\ar[d,"a"']\ar[r,"Tf"]&|[alias=D3]|TB\ar[d,"b"]\\
            |[alias=R1]|A\ar[r,"f"']&B&&|[alias=R3]|A\ar[r,"f"']&B
            \arrow[Rightarrow,from=D1,to=R1,"\overline{f}"',shorten=4mm]
            \arrow[Rightarrow,from=D2,to=R2,"T\overline{f}"',shorten=4mm]
            \arrow[Rightarrow,from=D3,to=R3,"\overline{f}"',shorten=4mm]
        \end{tikzcd}
    \end{center}
    holds.
    \item The following pasting diagram equals the identity of $f\colon A\to B:$
    \begin{center}
        \begin{tikzcd}
            A\ar[d,"\eta"']\ar[r,"f"]&B\ar[d,"\eta"]\\
            TA\ar[r,"Tf"]\ar[d,"a"']&|[alias=D1]|Tb\ar[d,"b"]\\
            |[alias=R1]|A\ar[r,"f"']&B.
            \arrow[Rightarrow,from=D1,to=R1,"\overline{f}"',shorten=4mm]
        \end{tikzcd}
    \end{center}
\end{enumerate}
A 2-cell in $T\mbox{-}Alg$ between 1-cells $(f,\bar{f}),(g,\bar{g})\colon A\to B$ is a 2-cell $\alpha\colon f\to g$ in $\K$ such that the following diagram commmutes:
\begin{center}
    \begin{tikzcd}
        TA\ar[dd,"a"']\ar[r,bend left=20,"Tf"name=D1]\ar[r,bend right=20,"Tg"{swap,name=R1}]&[15pt]|[alias=D2]|TB\ar[dd,"b"]&[-15pt]&[-15pt]TA\ar[dd,"a"']\ar[r,bend left=20,"Tf"]&[15pt]|[alias=D3]|TB\ar[dd,"b"]\\[-10pt]
        &&=&&\\[-10pt]
        |[alias=R2]|A\ar[r,bend right=20,"g"']&B&&|[alias=R3]|A\ar[r,bend left=20,"f"name=D4]\ar[r,bend right=20,"g"{swap,name=R4}]&B.
        \arrow[Rightarrow,from=D1,to=R1,"T\alpha"',shorten <=1mm,shorten >=1mm]
        \arrow[Rightarrow,from=D2,to=R2,"\overline{g}",shorten <=9mm,shorten >=3mm]
        \arrow[Rightarrow,near start,from=D3,to=R3,"\overline{f}"',shorten <=2mm,shorten >=10mm]
        \arrow[Rightarrow,from=D4,to=R4,"\alpha"',shorten <=1mm,shorten >=1mm]
    \end{tikzcd}
\end{center}
\end{defi}
Hyland and Power \cite{HP02} extend  Blackwell, Kelly and Power's 2-categorical construction to provide a non symmetric $\Cat$-multicategory whose underlying 2-category is $T\mbox{-}Alg.$   When $\K=\Cat$ and $T$ is accesible, Bourke proves \cite{B17,BL20} that the $\Cat$-multicategory structure can be seen to arise from a monoidal bicategory structure on $\talg$. An example of this is the monad for permutative categories, and the corresponding symmetric monoidal 2-category structure on permutative categories is explicitly worked out in \cite{GJO22}.  Guillou, May, Merling and Osorno \cite{GMMO21} specialize Hyland and Power's definition to define a multicategory $\mathcal{O}\mbox{-}Alg$ for $\mathcal{O}$ a pseudo commutative operad. We refer the reader to the \Cref{appendix} for the definition of $\Cat$-multicategory.

To be able to define the multicategory $T\mbox{-}Alg,$ we first need to prove a coherence result.
\begin{defi}\label{partition}
Suppose $(T,\eta,\mu, t)$ is a pseudo-commutative, strong 2-monad, $n\geq 2$ and $1\leq i<j\leq n$. We define a modification from $\mu\circ Ttj\circ t_i$ to $\mu\circ Tt_i\circ t_j$ as follows. Suppose   $A_1,\dots,A_n$ objects of $\K,$ we define the component 2-cell of our modificiation in
\begin{center}
    \begin{tikzcd}
        \K\left(A_1\times \cdots \times A_{i-1}\times TA_i\times A_{i+1}\times\cdots\times A_{j-1}\times  TA_j\times A_{j+1}\times \cdots\times A_n, T(A_1\times\cdots\times A_n)\right)
    \end{tikzcd}
\end{center}
in the following way. In principle there are various ways of doing this. Consider a partition $K$ of the symbols $A_1,\dots,TA_i,\dots,TA_j,\dots,A_n$ into 4 subsets $K_1,K_2,K_3,K_4$ obtained by placing 3 bars in between symbols such that $K_2$ contains $TA_i,$ and $K_3$ contains $TA_j.$ We will represent $K$ in the following way:
$$ \underbrace{\cdots\times \cdots}_{K_1}\mid \underbrace{\cdots\times TA_i\times \cdots}_{K_2}\mid \underbrace{\cdots \times TA_j\times \cdots}_{K_3}\mid\underbrace{ \cdots\times \cdots}_{K_4}.$$
For such a partition $K,$ we can define the 2-cell $\Gamma_{i,j}^K$ as the whiskering
\begin{center}
    \begin{tikzcd}
        A_1\times \cdots \times TA_i\times \cdots\times TA_j\times \cdots\times A_n\ar[d,"="]\\[-10pt]
        \underbrace{\cdots\times \cdots}_{K_1}\mid \underbrace{\cdots\times TA_i\times \cdots}_{K_2}\mid \underbrace{\cdots \times TA_j\times \cdots}_{K_3}\mid\underbrace{ \cdots\times \cdots}_{K_4}\ar[d,"1\times t_{i-|K_1|}^{|K_2|}\times t_{j-|K_1|-|K_2|}^{|K_3|}\times 1"]\\[-10pt]
        \cdots\times \cdots \times T(\cdots\times A_i\times \cdots)\times T(\cdots\times A_j\times \cdots )\times \cdots\times \cdots\ar[d,bend right=50,"{\color{white}a}"{swap,name=D1}]\ar[d,bend left=50,"{\color{white}a}"name=R1]\\
        \underbrace{\cdots\times\cdots  }_{K_1}\times T(\cdots\times A_i\times\cdots\times A_j\times\cdots)\times \underbrace{\cdots\times\cdots}_{K_4}\ar[d,"t_{|K_1|+1}^n"]\\[-10pt]
        T(A_1\times\cdots\times A_n).
        \arrow[Rightarrow,from=D1,to=R1,"1\times \Gamma\times 1",shorten=2mm]
    \end{tikzcd}
\end{center}
\end{defi}
\begin{thm}\label{ij}{\cite[Thm. 5]{HP02}}
Suppose $(T,\eta,\mu,t,\Gamma)$ is a pseudo commutative, strong 2-monad. The three strength axioms imply that given $n\geq 2,$ and $1\leq i<j\leq n,$ any two partitions $K$ and $K'$ as in \Cref{partition} induce the same 2-cell. That is,
$$\Gamma_{i,j}^K=\Gamma_{i,j}^{K'}.$$
\end{thm}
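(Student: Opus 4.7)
The plan is to connect any two partitions $K$ and $K'$ by a sequence of \emph{elementary bar moves}---each move sliding exactly one of the three bars by a single position, i.e., shifting a single symbol from one of the two adjacent blocks to the other---and then to show that every elementary move preserves the $2$-cell. Since the left bar may occupy any position to the left of $TA_i$, the middle bar any position strictly between $TA_i$ and $TA_j$, and the right bar any position to the right of $TA_j$, the set of admissible partitions forms a product of three discrete intervals and is in particular connected under elementary moves. By transitivity along any chain of elementary moves it thus suffices to prove $\Gamma^K_{i,j}=\Gamma^{K'}_{i,j}$ when $K$ and $K'$ differ by a single elementary move.

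The three kinds of elementary moves correspond precisely to the three strength axioms of \Cref{pseudocomm}: a move of the left bar is an instance of axiom (\ref{axiomstrength1}), of the middle bar an instance of axiom (\ref{axiomstrength2}), and of the right bar an instance of axiom (\ref{axiomstrength3}). Indeed, reading the pasting diagrams of those axioms in the minimal $3$-object case shows that each states exactly the equality $\Gamma^K_{i,j}=\Gamma^{K'}_{i,j}$ for a specific pair of partitions related by an elementary move of the appropriate bar; for instance, axiom (\ref{axiomstrength1}) compares the partitions $(\emptyset,\{A,B\},\{C\},\emptyset)$ and $(\{A\},\{B\},\{C\},\emptyset)$ for the source $A\times TB\times TC$.

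The main work is to lift each $3$-object axiom to the general $n$-ary setting, for which the tool is the associativity of the strength $t$. Concretely, for a left-bar move that shifts a symbol $A_k$ from $K_2$ to $K_1$, I would use associativity to rewrite the inner strength $t^{|K_2|}_{i-|K_1|}$ in the formula for $\Gamma^K$ as a composite that first applies a $(|K_2|-1)$-ary strength to the block $K_2\setminus\{A_k\}$ (which still contains $TA_i$) and then absorbs $A_k$ via a single $t_2^2$; in parallel, one decomposes the outer strength appearing in $\Gamma^{K'}$ so that $A_k$ is absorbed by an analogous $t_2^2$ immediately after the contraction of the block $K_1$. Once these decompositions are matched, both pasting diagrams share a common outer whisker and differ only in an inner subdiagram of the shape of axiom (\ref{axiomstrength1}), applied with $A$ corresponding to the contracted block $K_1$, $B$ to the symbol $A_k$, and $C$ to the contraction of the remaining symbols of $K_2$, $K_3$, and $K_4$. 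Applying axiom (\ref{axiomstrength1}) then converts $\Gamma^K_{i,j}$ into $\Gamma^{K'}_{i,j}$. Middle-bar and right-bar moves are handled in the same spirit, using axioms (\ref{axiomstrength2}) and (\ref{axiomstrength3}) respectively.

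The hard part is the bookkeeping required to exhibit every elementary move as a whiskered instance of one of the three axioms: one must articulate an associativity decomposition of the iterated strengths $t^n_k$ that isolates a single $t_2^2$ in the correct position, and then verify that the outer whiskers on both sides of the proposed equality coincide after the decomposition. Once this is in place, the naturality of $t$ together with the modification property of $\Gamma$ (as expressed in \Cref{strongdiagram}) ensures that the appropriate axiom can be pasted into the inner subdiagram to yield the desired equality, and the theorem follows by chaining the elementary moves.
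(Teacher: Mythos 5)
Your proposal is correct and follows essentially the same route as the paper: reduce to a single elementary bar move, identify moves of the first, second, and third bar with whiskered instances of strength axioms (\ref{axiomstrength1}), (\ref{axiomstrength2}), and (\ref{axiomstrength3}) respectively, and conclude by connectedness of the set of partitions under such moves. The paper's proof is in fact terser than yours; the associativity bookkeeping you describe for lifting the three-object axioms to the $n$-ary setting is left implicit there.
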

\begin{proof}
Let $K$ be a partition of $A_1,\dots,TA_i,\dots,TA_j,\dots, A_n$ as in \Cref{partition}. The following hold:
\begin{itemize}
    \item[(i)] If $K_1$ ends by $A_p,$ for some $p<i$,   and $K'$ is obtained from $K$ by moving the first bar one spot to the left, then $\Gamma_{i,j}^K=\Gamma_{i,j}^{K'}$ by (\ref{axiomstrength1}) in \Cref{pseudocomm}.
     \item[(ii)] If $K_2$ ends by $A_p$, for some $p$ such that $i<p<j,$ and $K'$ is obtained from $K$ by moving the second bar one spot to the left, then $\Gamma_{i,j}^K=\Gamma_{i,j}^{K'}$ by (\ref{axiomstrength2}) in \Cref{pseudocomm}.
    \item[(iii)] If $K_3$ ends by $A_p,$ for some $p>j,$  and $K'$ is obtained from $K$ by moving the third bar one spot to the left, then $\Gamma_{i,j}^K=\Gamma_{i,j}^{K'}$ by (\ref{axiomstrength3}) in \Cref{pseudocomm}.
    \end{itemize}
    Finally, let $K'$ be any partition as in \Cref{partition}, then $K'$  can be obtained from the partition
    $$K=A_1\times\cdots\times A_{i-1}\mid \times TA_i\times \cdots \times \mid TA_j\times \cdots\times A_n\mid$$
    by making some number of moves (i), (ii) and (iii), and so  $\Gamma_{i,j}^K=\Gamma_{i,j}^{K'}.$
\end{proof}
\begin{defi}
Let $(T,\eta,\mu,t,\Gamma)$ be a pseudo commutative, strong 2-monad, $n\geq 2,$ $1\leq i<j\leq n$ and $A_1,\dots, A_n$ objects of $\K$ we define the unique 2-cell in the previous theorem as $\Gamma_{i,j}.$ That is, if $K$ is any partition as in \Cref{partition}, then $\Gamma_{i,j}=\Gamma_{i,j}^K$
\end{defi}
\begin{rmk}
    To save some space in the following definitions we will denote the product $A_1\times \cdots\times A_{i-1}$ as $A_{<i}.$ When considering a product $A_1\times \cdots\times A_n$ we will also write $A_{>i}=A_{i+1}\times \cdots\times A_n.$ Notice also that the 2-cell $\Gamma_{i,j}$ defined in the previous theorem fits in the following diagram by the $\mu$ axiom for strong monads in \Cref{strong}:
    \begin{center}
       \adjustbox{scale=0.9}{
\begin{tikzcd}
	&[-10pt] {T(A_{<j}\times TA_j\times A_{>j})} & {T^2(A_1\times \cdots \times A_n)} \\
	{A_{<i}\times TA_i\times \cdots\times TA_j\times A_{>j}} & {T(A_{<i}\times TA_i\times A_{>i})} &[-10pt] {T^2(A_1\times \cdots\times A_n)} & {T(A_1\times\cdots\times A_n).}
	\arrow[""{name=0, anchor=center, inner sep=0}, "{Tt_j}", from=1-2, to=1-3]
	\arrow["\mu", from=1-3, to=2-4]
	\arrow["{t_i}", from=2-1, to=1-2]
	\arrow["{t_j}"', from=2-1, to=2-2]
	\arrow[""{name=1, anchor=center, inner sep=0}, "{Tt_i}"', from=2-2, to=2-3]
	\arrow["\mu"', from=2-3, to=2-4]
	\arrow["{\Gamma_{i,j}}", between={0.2}{0.8}, Rightarrow, from=0, to=1]
\end{tikzcd}}
    \end{center}
\end{rmk}
Next, we define the $\Cat$-multicategory $\talg$, whose underlying 2-category is $\talg$ from \Cref{talg}. In \Cref{onecelltalg} we define the 2-cells of $\talg,$ in \Cref{twocelltalg} we define the 2-cells in $\talg,$ and in \Cref{compositiontalg} we define the composition in $\talg.$
\begin{defi}\label{onecelltalg}{\cite[Def. 10]{HP02}} Let $(T,\eta,\mu,t,\Gamma)$ be a pseudo commutative, strong 2-monad. The $n$-ary 1-cells of the $\Cat$-multicategory $T\mbox{-}Alg$ are defined as follows. When $n=0,$ and $B$ is a $T$-algebra, we define the category $T\mbox{-}Alg(-;B)$ as $\K(1,B)$.

Suppose that $(A_i,a_i\colon TA_i\to A_i)$ for $1\leq i \leq n$ and $(B,b\colon TB\to B)$ are $T$-algebras. An $n$-ary 1-cell of $\talg$, $\la A_1 \times \cdots \times A_n\ra\to B$ is the data of a 1-cell $h\colon A_1\times \cdots\times A_n\to  B$ in $\K ,$ together with 2-cells $h_i$ for $1\leq i\leq n$ fitting in the square:
    \begin{center}
    \adjustbox{scale=0.9}{
        \begin{tikzcd}
            A_{<i}\times TA_i\times A_{>i}\ar[d,"1\times a_i\times 1"']\ar[r,"t_i"]&T(A_1\times \cdots \times A_n)\ar[r,"Th"]&|[alias=D1]|TB\ar[d,"b"]\\
            |[alias=R1]|A_1\times\cdots\times A_n\ar[rr,"h"']&&B.
            \arrow[Rightarrow,from=D1,to=R1,"h_i",shorten=22mm]
        \end{tikzcd}}
    \end{center}
    These data have to satisfy the following axioms.
    
        $\bullet$ $\eta$ axiom: The following pasting diagram is the identity of $h\colon A_1\times\cdots\times A_n\to B.$
        \begin{center}
        \begin{equation}
        \adjustbox{scale=0.9}{
            \begin{tikzcd}
            &A_{<i}\times A_i\times A_{>i}\ar[dl,"1\times \eta\times 1"']\ar[d,"\eta"]\ar[r,"h"]&B\ar[d,"\eta"]\\
            A_{<i}\times TA_i\times A_{>i}\ar[r,"t_i"]\ar[d,"1\times a_i\times 1"']&T(A_1\times \cdots\times A_n)\ar[r,"Th"']&|[alias=D1]|TB\ar[d,"b"]\\
            |[alias=R1]|A_1\times \cdots\times A_n\ar[rr,"h"']&&B.
            \arrow[Rightarrow,from=D1,to=R1,"h_i",shorten=20mm]
        \end{tikzcd}}
        \end{equation}
        \end{center}
    $\bullet$ $\mu$ axiom: The pasting diagrams
    \begin{center}
    \begin{equation}\label{mu1}
        \adjustbox{scale=0.9}{\begin{tikzcd}
            A_{<i}\times T^2A_i\times A_{>i}\ar[d,"1\times\mu\times 1"']\ar[r,"t_i"]&T(A_{<i}\times TA_i\times A_{>i})\ar[r,"Tt_i"]&T^2(A_1\times \cdots\times A_n)\ar[d,"\mu"]\ar[r,"T^2h"]&T^2B\ar[d,"\mu"]\\
            A_{<i}\times TA_i\times A_{<i}\ar[rr,"t_i"]\ar[d,"1\times a_i\times 1"']&&|[alias=D1]|T(A_1\times\cdots\times A_n)\ar[r,"Th"]&TB\ar[d,"b"]\\
            |[alias=D2]|A_1\times\cdots\times  A_n\ar[rrr,"h"']&&&B
            \arrow[Rightarrow,from=D1,to=R1,"h_i",shorten=13mm]
        \end{tikzcd}}
        \end{equation}
    \end{center}
    and,
    \begin{center}
    \begin{equation}\label{mu2}
        \adjustbox{scale=0.9}{\begin{tikzcd}
            A_{<i}\times T^2A_i\times A_{>i}\ar[d,"1\times Ta_i\times 1"']\ar[r,"t_i"]&T(A_{<i}\times TA_i\times A_{>i})\ar[d,"T(1\times a_i\times 1)"']\ar[r,"Tt_i"]&T^2(A_1\times\cdots\times A_n)\ar[r,"T^2h"]&|[alias=D1]|T^2B\ar[d,"Tb"]\\
            A_{<i}\times TA_i\times A_{>i}\ar[r,"t_i"]\ar[d,"1\times a_i\times 1"']&|[alias=R1]|T(A_1\times \cdots\times A_n)\ar[rr,"Th"{swap,name=D2}]&&TB\ar[d,"b"]\\
            |[alias=RD]|A_1\times \cdots\times A_n\ar[rrr,"h"']&&&B
            \arrow[Rightarrow,from=D1,to=R1,"Th_i",shorten=15mm]
            \arrow[Rightarrow,from=D2,to=R2,"h_i",shorten < =25mm,shorten > =20mm]
        \end{tikzcd}}
        \end{equation}
    \end{center}
    are equal.
    
    $\bullet$ Coherence: For $i<j,$ the pasting diagrams
    \begin{center}
    \begin{equation}\label{coh1}
        \adjustbox{scale=0.9}{\begin{tikzcd}
            A_{<i}\times TA_i\times \cdots\times TA_{j}\times A_{>j}\ar[r,"t_i"]\ar[d,"1\times a_j\times 1"']&[-15pt]T(A_{<j}\times TA_j\times A_{>j})\ar[d,"T(1\times a_j\times 1)"']\ar[r,"Tt_j"]&[-15pt]|[alias=D1]|T^2(A_1\times \cdots \times A_n)\ar[r,"\mu"]\ar[d,"T^2h"]&[-15pt]T(A_1\times \cdots\times A_n)\ar[d,"Th"]\\
            A_{<i}\times TA_i\times A_{>i}\ar[r,"t_i"]\ar[dd,"1\times a_i\times 1"']&|[alias=R1]|T(A_1\times \cdots\times A_n)\ar[rd,"Th"']&T^2B\ar[d,"Tb"]\ar[r,"\mu"]&TB\ar[dd,"b"]\\[-10pt]
            &&TB\ar[rd,"b"]&\\[-10pt]
            |[alias=R2]|A_1\times \cdots \times A_n\ar[rrr,"h"']&&&B
            \arrow[Rightarrow,from=D1,to=R1,"Th_j",shorten=7mm]
            \arrow[Rightarrow,from=R1,to=R2,"h_i",shorten=8mm]
        \end{tikzcd}}
        \end{equation}
    \end{center}
    and,
    \begin{center}
    \begin{equation}\label{coh2}
        \adjustbox{scale=0.9}{\begin{tikzcd}
            &[-15pt]T(A_{<j}\times TA_j\times A_{>j})\ar[r,"Tt_j"name=D1]&[-15pt]T^2(A_1\times\cdots \times  A_n)\ar[rd,"\mu"]&\\
    A_{<i}\times TA_i\times \cdots\times TA_{j}\times A_{>j}\ar[r,"t_j"]\ar[ru,"t_i"]\ar[d,"1\times a_i\times 1"']&T(A_{<i}\times TA_i\times A_{>i})\ar[d,"T(1\times a_i\times 1)"']\ar[r,"Tt_i"{swap,name=R1}]&|[alias=D2]|T^2(A_1\times \cdots\times A_n)\ar[d,"T^2h"]\ar[r,"\mu"]&T(A_1\times\cdots\times  A_n)\ar[d,"Th"]\\
    A_{<j}\times TA_j\times A_{>j}\ar[r,"t_j"]\ar[dd,"1\times a_j\times 1"']&|[alias=R2]|T(A_1\times \cdots\times A_n)\ar[rd,"Th"']&T^2B\ar[d,"Tb"]\ar[r,"\mu"]&TB\ar[dd,"b"]\\[-10pt]
    &&TB\ar[rd,"b"]&\\[-10pt]
    |[alias=R3]|A_1\times\cdots\times A_n\ar[rrr,"h"']&&&B
    \arrow[Rightarrow,from=D1,to=R1,"\Gamma_{i,j}"',shorten=3mm]
    \arrow[Rightarrow,from=D2,to=R2,"Th_i",shorten=7mm]
    \arrow[Rightarrow,from=R2,to=R3,"h_j",shorten=10mm]
        \end{tikzcd}}
        \end{equation}
    \end{center}
    are equal.
\end{defi}
\begin{defi}\label{twocelltalg}{\cite[Def. 10]{HP02}} Let $(T,\eta,\mu,t,\Gamma)$ be a pseudo commutative, strong 2-monad. We define the 2-cells of $\talg$ as follows. Suppose that $(A_i,a_i\colon TA_i\to A_i)$ and $(B,b\colon TB\to B)$ are $T$-algebras for $1\leq i\leq n,$ and that  $(f,\la f_i\ra)$ and $(g,\la g_i\ra)$ are 1-cells in $\talg ( \la A_1 , \dots ,A_n\ra,B).$ A 2-cell $\alpha\colon f\to g$ in $\talg$ is the datum of a 2-cell in $\K$ of the form
    \begin{center}
\adjustbox{scale=0.9}{\begin{tikzcd}
	{A_1\times \cdots\times A_n} & B,
	\arrow[""{name=0, anchor=center, inner sep=0}, curve={height=-12pt}, from=1-1, to=1-2]
	\arrow[""{name=1, anchor=center, inner sep=0}, curve={height=12pt}, from=1-1, to=1-2]
	\arrow["\alpha", between={0.2}{0.8}, Rightarrow, from=0, to=1]
\end{tikzcd}}
    \end{center}
    subject to the equality, for $i<n,$ of the pasting diagrams
    \begin{center}
    \begin{equation}\label{2cell1}
    \adjustbox{scale=0.9}{
        \begin{tikzcd}
            A_{<i}\times TA_i\times A_{>i}\ar[d,"1\times a_i\times 1"']\ar[r,"t_i"']&T(A_1\times \cdots \times A_n)\ar[r,"Tf"]&|[alias=D1]|TB\ar[d,"b"]\\[25pt]
            |[alias=R1]|A_1\times\cdots\times A_n\ar[rr,bend right=10,"g"{swap,name=R2}]\ar[rr,bend left=10,"f"name=D2]&&B,
\arrow[Rightarrow,from=D1,to=R1,"f_i"',shorten <=20mm,shorten >=22mm]
        \arrow[Rightarrow,from=D2,to=R2,"\alpha",shorten=1mm]
        \end{tikzcd}}
        \end{equation}
    \end{center}
    and
    \begin{center}
    \begin{equation}\label{2cell2}
        \adjustbox{scale=0.9}{\begin{tikzcd}
            A_{<i}\times TA_i\times A_{>i}\ar[d,"1\times a_i\times 1"']\ar[r,"t_i"']&T(A_1\times \cdots \times A_n)\ar[r,bend right,"Tg"{swap,name=R2}]\ar[r,bend left,"Tf"name=D2]&[20pt]|[alias=D1]|TB\ar[d,"b"]\\
            |[alias=R1]|A_1\times\cdots\times A_n\ar[rr,"h"']&&B.
\arrow[Rightarrow,from=D1,to=R1,"g_i",shorten <=30mm,shorten >=22mm]
\arrow[Rightarrow,from=D2,to=R2,"T\alpha",shorten=1mm]
        \end{tikzcd}}
        \end{equation}
    \end{center}
    Vertical composition of 2-cells in $\talg$ is given by vertical composition in $\K.$
\end{defi}
Next we define the $\gamma$ composition in $\talg.$
 \begin{defi}\label{compositiontalg}
Let $(T,\eta,\mu,t,\Gamma)$ be a pseudo commutative, strong 2-monad. For  $(C,c)\in$  $\ob(\talg)$, $n\geq 0,$ $\la B \ra =\la (B_j,b_j)\ra_{j=1}^n\in \ob(\talg)^n,$ $k_j\geq 0$ for $1\leq j\leq n,$   and $\la A_j\ra =$ $\la A_{j,i}\ra_{i=1}^{k_j}$ $\in \ob(\talg)^{k_j}$ for $1\leq j \leq n,$ we define
        \begin{center}\adjustbox{scale=0.9}{
            \begin{tikzcd}
                \talg(\la B\ra;C)\times \prod\limits_{j=1}^n\talg(\la A_j\ra;B_j)\ar[r,"\gamma"]&\talg(\la A\ra;C)
            \end{tikzcd}}
        \end{center}
as follows. Let $(f,f_j)\colon B_1\times \cdots\times B_n\to C$ and $(g_j,g_{ji})\colon A_{j,1}\times \cdots \times A_{j,k_j}\to B_j$ 1-cells of $T$-algebras. We define their $\gamma$ composition as the $\K$ 1-cell
\begin{center}
\adjustbox{scale=0.9}{
    \begin{tikzcd}
        \overline{A_1}\times\cdots\times \overline{A_n}\ar[r,"\prod g_i"]&B_1\times \cdots \times B_n\ar[r,"f"]&C
    \end{tikzcd}}
\end{center}
where $\overline{A_j}$ denotes $\prod_{i=1}^{k_j}A_{j,i}.$  Any number between $s$ with  $1\leq s\leq \sum_{j=1}^nk_j$ can be uniquely written as $s=d+\sum_{t<j}k_t$ where $d<k_j.$ We define $\gamma(\la g_j\ra,f)_s$ as the pasting
\begin{center}
\adjustbox{scale=0.9}{
    \begin{tikzcd}
        &[-80]\overline{A_1}\times\cdots\times \overline{A_{j-1}}\times A_{j,1}\times \cdots\times TA_{j,d}\times \cdots\times A_{j,k_j}\times \overline{A_{j+1}}\times \cdots\times \overline{A_n}\ar[d,"1\times t_d\times 1"']\ar[r,"1\times a_{j,d}\times 1"]\ar[dddl,xshift=-6.2ex,"t_s"']&|[alias=R1]|\overline{A_1}\times \cdots\times \overline{A_n}\ar[dd,"1\times g_j\times 1"]\\
        &\overline{A_1}\times\cdots\times T(\overline{A_j})\times \cdots\times \overline{A_n}\ar[ddl,xshift=-3ex,"t_j"']\ar[d,"1\times Tg_j \times 1"]&\\
        &|[alias=D1]|\overline{A_1}\times \cdots\times TB_j\times \cdots\times \overline{A_n}\ar[d,"g_1\times \cdots\times 1\times \cdots \times g_n"']
        \ar[r,"1\times b_j\times 1"']&\overline{A_1}\times \cdots\times B_j\times \cdots\times \overline{A_n}\ar[d,"g_1\times \cdots\times 1\times \cdots\times g_n"']\\
        T(\overline{A_1}\times \cdots\times \overline{A_n})\ar[rd,"T(g_1\times\cdots\times g_n)"']&B_1\times\cdots\times TB_j\times \cdots\times B_n\ar[r,"1\times b_j\times 1"]\ar[d,"t_j"']&|[alias=R2]|B_1\times \cdots\times B_n\ar[dd,"f"]\\
        &T(B_1\times \cdots\times B_n\ar[d,"Tf"'])&\\
        &|[alias=D2]|TC\ar[r,"c"']&C.
        \arrow[Rightarrow,from=D1,to=R1,"1\times g_{j,d}\times 1"',shorten <=20mm,shorten >=35]
            \arrow[Rightarrow,from=D2,to=R2,"f_j"',shorten <=20mm,shorten >=40]
    \end{tikzcd}}
\end{center}
The multilinear composition for 2-cells is defined in the following way. Suppose that $(f,f_j),$ and $(f',f_j)$ are multilinear 1-cells from  $\la B_1, \dots, B_n\ra$ to $C,$ and $(g_j,g_{ji}),$ and $(g_j',g'_{ji})$ are multilinear 1-cells from $ \la A_{j,1},\dots,A_{j,{k_j}}\ra$ to $B_j$ in $\talg .$ Suppose also that $\alpha\colon f\to f',$ and $ \beta_j\colon g_j\to g_j'$ are 2-cells in $\talg.$ Then, the component 2-cell of $\gamma(\alpha;\beta_1,\dots,\beta_n)$ is the pasting
\begin{center}
    \adjustbox{scale=0.9}{
\begin{tikzcd}
	{\overline{A_1}\times\cdots\times \overline{A_n}} &[10pt] {B_1\times \cdots\times B_n} &[10pt] {C.}
	\arrow[""{name=0, anchor=center, inner sep=0}, "{\prod g_{j}}", curve={height=-12pt}, from=1-1, to=1-2]
	\arrow[""{name=1, anchor=center, inner sep=0}, "{\prod g_{j'}}"', curve={height=12pt}, from=1-1, to=1-2]
	\arrow[""{name=2, anchor=center, inner sep=0}, "f", curve={height=-12pt}, from=1-2, to=1-3]
	\arrow[""{name=3, anchor=center, inner sep=0}, "{f'}"', curve={height=12pt}, from=1-2, to=1-3]
	\arrow["{\prod\beta_{j}}", between={0.2}{0.8}, Rightarrow, from=0, to=1]
	\arrow["\alpha", between={0.2}{0.8}, Rightarrow, from=2, to=3]
\end{tikzcd}}
\end{center}
One can easily check that this composition is well defined.
\end{defi}
By imposing an extra condition on $T$ we can turn $\talg$ into symmetric $\Cat$-multicategory.
\begin{defi}\label{Tsymmetric}
A pseudo commutative, strong 2-monad $(T,\eta,\mu,t,\Gamma)$ is called \emph{symmetric} if for all $A,B$ objects of $\K,$ the following pasting diagram equals the identity of the 1-cell \begin{tikzcd}
    TA\times TB\ar[r,"t_1"]&T(A\times TB)\ar[r,"Tt_2"]&T^2(A\times B)\ar[r,"\mu"]&T(A\times B):
\end{tikzcd}
\begin{center}
    \begin{tikzcd}
        TA\times TB\ar[d,"\cong"']\ar[r,"t_1"]&T(A\times TB)\ar[r,"Tt_2"]&|[alias=D1]|T^2(A\times B)\ar[r,"\mu"]&T(A\times B)\\
        TB\times TA\ar[d,"\cong"']\ar[r,"t_1"]&|[alias=R1]|T(B\times TA)\ar[r,"Tt_2"]&|[alias=D2]|T^2(B\times A)\ar[r,"\mu"]&T(B\times A)\ar[u,"T\cong"']\\
        TA\times TB\ar[r,"t_1"]&|[alias=R2]|T(A\times TB)\ar[r,"Tt_2"]&T^2(A\times B)\ar[r,"\mu"]&T(A\times B).\ar[u,"T\cong"']
        \arrow[Rightarrow,from=D1,to=R1,"\Gamma_{A,B}"',shorten <=1mm,shorten >=1mm]
        \arrow[Rightarrow,from=D2,to=R2,"\Gamma_{B,A}"',shorten <=1mm,shorten >=1mm]
    \end{tikzcd}
\end{center}
\end{defi}
\begin{rmk} As noted in \cite{HP02}, this axiom introduces further redundancies in \Cref{pseudocomm}. Under symmetry, and using the numenclature from \Cref{pseudocomm},  the axioms (\ref{axiomstrength1}), (\ref{axiomstrength2}), and (\ref{axiomstrength3})  are equivalent, the conditions (\ref{axiometa1}) and (\ref{axiometa2}) are equivalent and the two statements (\ref{axiommu1gamma}) and (\ref{axiommu2gamma}) are equivalent.
\end{rmk}
In general, for $1\leq i<j\leq n$ we can define 2-cells $\Gamma_{j,i}$ that are inverses to the $\Gamma_{i,j}$ from \Cref{ij}.
\begin{defi}
Let $(T,\eta,\mu,t,\Gamma)$ be a pseudo commutative, strong 2-monad and $A_1,\dots,A_n$ objects of $\K.$ Let $K$ be a partition as in \Cref{partition}. We define $\Gamma_{j,i}$ as the whiskering
\begin{center}
\adjustbox{scale=0.85}{
\begin{tikzcd}
	{A_1\times \cdots\times TA_i\times \cdots\times TA_j\times \cdots\times A_n} \\[-5pt]
	{\underbrace{\cdots\times \cdots}_{K_1}|\underbrace{\cdots\times TA_i \times \cdots}_{K_2}|\underbrace{\cdots\times TA_j\times\cdots}_{K_3}|\underbrace{\cdots\times\cdots}_{K_4}} \\[-5pt]
	{\underbrace{\cdots\times \cdots}_{K_1}|\underbrace{\cdots\times TA_j \times \cdots}_{K_3}|\underbrace{\cdots\times TA_i\times\cdots}_{K_2}|\underbrace{\cdots\times\cdots}_{K_4}} \\[-5pt]
	{\underbrace{\cdots\times\cdots}_{K_1}\times T(\cdots\times A_i\times \cdots)\times T(\cdots\times A_j\times \cdots)\times \underbrace{\cdots\times \cdots}_{K_4}} \\
	{\underbrace{\cdots\times\cdots}_{K_1}\times T(\cdots\times A_i\times \cdots\times A_j\times \cdots)\times \underbrace{\cdots\times \cdots}_{K_4}} \\[-5pt]
	{T(\cdots\times A_i\times \cdots\times A_j\times \cdots)} \\[-10pt]
	{T(A_1\times \cdots\times A_n).}
	\arrow["{=}", from=1-1, to=2-1]
	\arrow["\cong", from=2-1, to=3-1]
	\arrow["{1\times t_{j-|K_1|}\times t_{i-|K_1|-|K_3|}\times 1}", from=3-1, to=4-1]
	\arrow[""{name=0, anchor=center, inner sep=0}, shift left=5, curve={height=-6pt}, from=4-1, to=5-1]
	\arrow[""{name=1, anchor=center, inner sep=0}, shift right=5, curve={height=6pt}, from=4-1, to=5-1]
	\arrow["{t_{|K_1|+1}}", from=5-1, to=6-1]
	\arrow["\cong", from=6-1, to=7-1]
	\arrow["{1\times \Gamma\times 1}", between={0.2}{0.8}, Rightarrow, from=1, to=0]
\end{tikzcd}}
\end{center}
\end{defi}
\begin{rmk}Notice that $\Gamma_{j,i}$ is independent of the partition by \Cref{ij}. The symmetry axiom can thus be written as $\Gamma_{1,2}=\Gamma_{2,1}^{-1}.$ If we write $\Gamma_{1,2}\colon \omega\to \omega ',$ then the symmetry axiom takes the form
\begin{center}
\begin{tikzcd}
	{TA\times TB} & {T(A\times B)} & {=} & {TA\times TB} & {T(A\times B).}
	\arrow[""{name=0, anchor=center, inner sep=0}, "\omega", curve={height=-24pt}, from=1-1, to=1-2]
	\arrow[""{name=1, anchor=center, inner sep=0}, "\omega"', curve={height=24pt}, from=1-1, to=1-2]
	\arrow[""{name=2, anchor=center, inner sep=0}, "{\omega'}"', from=1-1, to=1-2]
	\arrow[""{name=3, anchor=center, inner sep=0}, "\omega", curve={height=-12pt}, from=1-4, to=1-5]
	\arrow[""{name=4, anchor=center, inner sep=0}, "{\omega }"', curve={height=12pt}, from=1-4, to=1-5]
	\arrow["{\Gamma_{2,1}}"{pos=0.6}, between={0.5}{0.9}, Rightarrow, from=2, to=1]
	\arrow["{\Gamma_{1,2}}", between={0.2}{0.8}, Rightarrow, from=0, to=2]
	\arrow["{1_\omega}", between={0.2}{0.8}, equals, from=3, to=4]
\end{tikzcd}
\end{center}
\end{rmk}
\begin{lem}\label{inverses}Let $(T,\eta,\mu,t,\Gamma)$ be a symmetric, strong, pseudo commutative 2-monad. Let $0\leq i<j\leq n$. Then $\Gamma_{j,i}=\Gamma_{i,j}^{-1}$.
\end{lem}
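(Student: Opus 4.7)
The plan is to reduce the $n$-ary statement to the binary symmetric axiom (\Cref{Tsymmetric}) by exploiting the partition-independence of $\Gamma_{i,j}$ granted by \Cref{ij} together with the functoriality of whiskering. The key observation is that, for any fixed partition $K = K_1 \mid K_2 \mid K_3 \mid K_4$ of the symbols $A_1, \dots, TA_i, \dots, TA_j, \dots, A_n$, both $\Gamma_{i,j}$ and $\Gamma_{j,i}$ are whiskerings of the same base pseudocommutativity 2-cell $\Gamma$ applied to a common pair of condensed objects.

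First I would fix a partition $K$ and write $X$ for the product obtained from $K_2$ by replacing $TA_i$ with $A_i$, and $Y$ for the analogous condensation of $K_3$. Inspecting \Cref{partition} and the definition of $\Gamma_{j,i}$ immediately preceding the lemma, both 2-cells share a common outer scaffolding built from strengths and canonical isomorphisms of $\K$: pre-whiskering by a 1-cell $\alpha$ assembled from the two inner strengths applied inside $K_2$ and $K_3$, and post-whiskering by a 1-cell $\beta$ built from the outer strength $t^n_{|K_1|+1}$. With this common scaffolding,
\[
\Gamma_{i,j}^K \;=\; \beta \,\ast\, \bigl(1_{K_1} \times \Gamma_{X,Y} \times 1_{K_4}\bigr) \,\ast\, \alpha,
\]
whereas $\Gamma_{j,i}^K$ differs from this expression only by pre-composing with the symmetry $\sigma \colon K_2 \times K_3 \cong K_3 \times K_2$ of $\K$ (which after condensation induces $TX \times TY \cong TY \times TX$), post-composing with its $T$-image $T\sigma'$, and replacing $\Gamma_{X,Y}$ by $\Gamma_{Y,X}$.

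The second step is to invoke the binary symmetric axiom. As the remark preceding the lemma makes explicit, \Cref{Tsymmetric} says precisely that the $\sigma$-conjugate $(T\sigma')^{-1} \ast \Gamma_{Y,X} \ast \sigma$ of $\Gamma_{Y,X}$ is the vertical inverse of $\Gamma_{X,Y}$ as a 2-cell between the two parallel 1-cells $TX \times TY \to T(X \times Y)$. Since horizontal whiskering by 1-cells---first by $1_{K_1} \times (-) \times 1_{K_4}$ and then by $\alpha$ and $\beta$---is a functor on hom-categories, it sends vertical inverses to vertical inverses. Therefore $\Gamma_{j,i}^K$ and $\Gamma_{i,j}^K$ are vertical inverses, yielding the desired equality $\Gamma_{j,i} = \Gamma_{i,j}^{-1}$.

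The main obstacle I expect is not conceptual but notational: one must carefully verify that the symmetry isomorphisms $\sigma, \sigma'$ appearing in the definition of $\Gamma_{j,i}$ align, after the condensation by the inner strengths, with the specific configuration of symmetries appearing in \Cref{Tsymmetric}. This requires the 2-naturality of the strength $t$ (so that symmetries of $\K$ commute past strengths up to $T$-applied symmetries) together with the strength associativity axiom, but these are formal manipulations once the pasting diagrams are written out in full.
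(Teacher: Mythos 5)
Your proof is correct. The paper actually states \Cref{inverses} without any proof, treating it as immediate from the definition of $\Gamma_{j,i}$ together with \Cref{Tsymmetric}; your argument --- fixing a partition, condensing the blocks $K_2,K_3$ to a pair of objects $X,Y$, and transporting the binary identity $T{\cong}\ast\Gamma_{Y,X}\ast{\cong}=\Gamma_{X,Y}^{-1}$ through the common whiskering scaffolding (using 2-naturality of $t$ and functoriality of whiskering to preserve vertical inverses) --- is exactly the reasoning the paper leaves implicit, as signalled by the remark preceding the lemma that records the $n=2$ case as $\Gamma_{1,2}=\Gamma_{2,1}^{-1}$.
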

We notice that since $\Gamma$ is invertible, the inverse pseudo commutativity 2-cells $\Gamma^{-1}$ satisfy analogous properties to those in \Cref{pseudocomm} satisfied by $\Gamma$. When using these properties we will refer the reader to \Cref{pseudocomm}.
Next we define the symmetric $\Cat$-multicategorical structure on $\talg$ for $T$ symmetric. This is the definition of Hyland and Power \cite{HP02}, which agrees with the one given in \cite{GMMO21} for pseudo commutative operads.
\begin{defi}{\cite[Prop. 18]{HP02}}
Let $(T,\eta,\mu,t,\Gamma)$ be a symmetric, pseudo commutative, strong 2-monad. We give $\talg$ the structrue of a symmetric $\Cat$-multicategory by defining the action of the symmetric group. For $A_1,\dots,A_n,B$ objects of $\K,$ and $\sigma\in \Sigma_n,$ define
\begin{center}
\adjustbox{scale=0.9}{
\begin{tikzcd}
    \talg (A_1,\dots,A_n;B)\ar[r,"\sigma"]&\talg (A_{\sigma(1)},\dots, A_{\sigma(n)};B),
\end{tikzcd}}
\end{center}
in the following way. If $(h,h_i)\colon \la A_1, \dots, A_n\ra\to B$ is a 1-cell in $\talg$, we define the 1-cell component of  $h\sigma$ in $\K$ as  $A_{\sigma(1)}\times\cdots\times A_{\sigma(n)}\xrightarrow{\sigma}A_1\times \cdots\times A_n\xrightarrow{h}B.$
We define $(h\sigma)_i$ as the pasting
\begin{center}
\adjustbox{scale=0.9}{
    \begin{tikzcd}
        A_{\sigma(1)}\times\cdots\times TA_{\sigma(i)}\times \cdots \times A_{\sigma(n)}\ar[r,"\sigma"]\ar[d,"1\times a_{\sigma(i)}\times 1"']&[-15pt]A_1\times\cdots\times TA_{\sigma(i)}\times \cdots\times A_n\ar[r,"t_{\sigma(i)}"]\ar[d,"1\times a_{\sigma(i)}\times 1"']&|[alias=D1]|T(A_1\times \cdots\times A_n)\ar[r,"h"]&[-15pt]TB\ar[d,"b"]\\[15pt]
        A_{\sigma(1)}\times \cdots\times A_{\sigma(n)}\ar[r,"\sigma"']&|[alias=R1]|A_1\times \cdots\times A_n\ar[rr,"h"']&&B.

        \arrow[Rightarrow,from=D1,to=R1,"h_{\sigma(i)}^n",shorten <=10mm,shorten >=10mm]
    \end{tikzcd}}
\end{center}
Similarly, for $\alpha\colon f\to g$ 2-cell in $\talg (A_1,\dots,A_n;B)(f,g),$ $\alpha\sigma$ is defined as having component 2-cell
\begin{center}
\begin{tikzcd}
	{A_{\sigma(1)}\times\cdots\times A_{\sigma(n)}} & {A_1\times \cdots\times A_n} & {B.}
	\arrow["\sigma", from=1-1, to=1-2]
	\arrow[""{name=0, anchor=center, inner sep=0}, curve={height=-12pt}, from=1-2, to=1-3]
	\arrow[""{name=1, anchor=center, inner sep=0}, curve={height=12pt}, from=1-2, to=1-3]
	\arrow["\alpha", between={0.2}{0.8}, Rightarrow, from=0, to=1]
\end{tikzcd}
\end{center}
\end{defi}

\begin{rmk}To prove that given a 1-cell $h\colon\la A_1,\dots,A_n\ra\to B$ in $\talg,$ $h\sigma\colon \la A_{\sigma(1)},\dots,A_{\sigma_n}\ra\to B$ is indeed a 1-cell in $\talg$ we need the symmetry axiom. The $\eta$ and $\mu$ axioms for $h\sigma$ follow from the same axioms for $h.$ To prove coherence one can prove that given $0\leq i<j\leq n,$ if   $h$ satisfies coherence, then so does $h\sigma_{i,j}.$ Here, $\sigma_{i,j}\in \Sigma_n$ is the transposition that permutes $i$ and $j.$  Coherence for $h\sigma_{i,j}$ follows from coherence for $h$ together with \Cref{inverses}.
\end{rmk}
\section{The free algebra functor as a non-symmetric multifunctor}\label{Section3.2}
Recall that for $(T,\eta,\mu,t,\Gamma)$ a pseudo commutative, strong 2-monad and $A\in \K,$ $(TA,\mu:T^2A\to TA)$ is a $T$-algebra and can be thought of as the free $T$ algebra generated by $A.$ This defines a 2-functor $T\colon \K\to \talg$ \cite{BKP02} that, as we show in this section, can be extended to non-symmetric multifunctor when  $T$ is symmetric.
\begin{defi}
Let $(T,\eta,\mu,t,\Gamma)$ be a symmetric, pseudo commutative, strong 2-monad. Given $A_1,A_2\in\ob(\K)$ we define the 2-ary 1-cell in $\talg,$ $\omega=\omega_{A_1,A_2}\colon \la TA_1,TA_2\ra\to T(A_1\times A_2)$ as follows. The component 1-cell $\omega_{A_1,A_2}$ is the composite
\begin{center}
\adjustbox{scale=0.9}{
    \begin{tikzcd}
        TA_1\times TA_2\ar[r,"t_1"]&[-10pt]T(A_1\times TA_2)\ar[r,"Tt_2"]&[-10pt]T^2(A_1\times A_2)\ar[r,"\mu"]&[-10pt]T(A_1\times A_2).
    \end{tikzcd}}
\end{center}
We can take the 2-cell $\omega_1$ to be the identity since the following diagram commutes by definition of $t_1$ and naturality of $\mu$:
\begin{center}
\adjustbox{scale=0.9}{
    \begin{tikzcd}
        T^2A_1\times TA_2\ar[r,"t_1"]\ar[d,"\mu\times 1"']&T(TA_1\times TA_2)\ar[r,"T\omega"]&T^2(A_1\times A_2)\ar[d,"\mu"]\\
        TA_1\times TA_2\ar[rr,"\omega"']&&T(A_1\times A_2).
    \end{tikzcd}}
\end{center}
We define $\omega_2$ as being the following pasting diagram:
\begin{center}
\adjustbox{scale=0.9}{
    \begin{tikzcd}
        TA_1\times T^2A_2\ar[dd,"1\times \mu"']\ar[rr,"t_2"]\ar[rd,"\cong"]&[-15pt]&[-15pt]T(TA_1\times TA_2)\ar[d,"T\cong"]\ar[rr,"T\omega"name=D1]&&T^2(A_1\times A_2)\ar[dd,"\mu"]\\
        
        &T^2A_2\times TA_1\ar[d,"\mu\times 1"']\ar[r,"t_1"]&T(TA_2\times TA_1)\ar[r,"T\omega"{swap,name=R1}]&T^2(A_2\times A_1).\ar[ru,"T^2\cong"']\ar[d,"\mu"]&\\

        TA_1\times TA_2\arrow[rd,equal]\ar[r,"\cong"]&TA_2\times TA_1\ar[d,"\cong"]\ar[rr,"\omega"name=D2]&&T(A_2\times A_1)\ar[r,"T\cong"]&T(A_1\times A_2)\\[-5pt]
        
        &TA_1\times TA_2\ar[rr,"\omega"{swap,name=R2}]&&T(A_1\times A_2)\ar[u,"T\cong"]\arrow[ru,equal]&
        \arrow[Rightarrow,near start,from=D1,to=R1,"T\Gamma_{A_1,A_2}",shorten <=5mm,shorten >=5mm]

        \arrow[Rightarrow,from=D2,to=R2,"\Gamma_{A_2,A_1}",shorten <=2mm,shorten >=2mm]
    \end{tikzcd}}
\end{center}
\end{defi}
\begin{rmk}
The previous definition generalizes the definitions of \cite{GMMO21} for the case of  pseudo commutative operads.
\end{rmk}
\begin{lem}\label{omega2} Let $(T,\eta,\mu,t,\Gamma)$ be a symmetric, pseudo commutative, strong 2-monad. For $A_1,A_2$ be objects of $\K$, $\omega_2$ equals the whiskering
\begin{center}
\begin{tikzcd}
	{TA_1\times T^2A_2} & {T(A_1\times TA_2)} & {T^2(A_1\times A_2)} & {T(A_1\times A_2).}
	\arrow[""{name=0, anchor=center, inner sep=0}, "{\omega '}", curve={height=-12pt}, from=1-1, to=1-2]
	\arrow[""{name=1, anchor=center, inner sep=0}, "\omega"', curve={height=12pt}, from=1-1, to=1-2]
	\arrow["{Tt_2}", from=1-2, to=1-3]
	\arrow["\mu", from=1-3, to=1-4]
	\arrow["{\Gamma^{-1}}", between={0.2}{0.8}, Rightarrow, from=0, to=1]
\end{tikzcd}
\end{center}
\end{lem}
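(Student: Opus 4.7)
The plan is to reduce the pasting defining $\omega_2$ to the claimed whiskering by successive applications of the symmetry axiom (\Cref{Tsymmetric}) and axiom (\ref{axiommu1gamma}) of \Cref{pseudocomm}.

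First, I would identify and simplify the ``swap--$\Gamma_{A_2,A_1}$--swap'' pattern appearing in the lower portion of the defining pasting. Between the swap $TA_1\times TA_2\xrightarrow{\cong}TA_2\times TA_1$ and the return $T(A_2\times A_1)\xrightarrow{T\cong}T(A_1\times A_2)$ one finds precisely one copy of $\Gamma_{A_2,A_1}$. By the symmetry axiom, equivalently \Cref{inverses}, this conjugated $\Gamma_{A_2,A_1}$ equals $\Gamma^{-1}_{A_1,A_2}$ as a 2-cell between 1-cells $TA_1\times TA_2\to T(A_1\times A_2)$. Using 2-naturality of the strengths $t_1,t_2$ and of $\mu$ with respect to the remaining swap 1-cells, together with functoriality of $T$, one slides these swaps to the boundary where they cancel against their inverses. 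After these moves, $\omega_2$ is expressed as a pasting involving only $T\Gamma_{A_1,A_2}$ (upper portion) and $\Gamma^{-1}_{A_1,A_2}$ (lower portion), joined by appropriate components of $t_1,t_2,Tt_1,Tt_2,\mu$, and $T\mu$.

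Second, I would compare this simplified pasting with the right-hand side of axiom (\ref{axiommu1gamma}). That axiom, at parameters $(A,B)=(A_1,A_2)$, equates $\Gamma_{A_1,A_2}\circ(1_{TA_1}\times\mu_{A_2})$ with a pasting of $\Gamma_{A_1,TA_2}$ (upper portion) and $T\Gamma_{A_1,A_2}$ (lower portion). Reading this identity as an equation of invertible 2-cells and solving for $\Gamma^{-1}_{A_1,TA_2}$ yields an expression of the form $T\Gamma_{A_1,A_2}$ composed vertically with $\Gamma^{-1}_{A_1,A_2}\circ(1\times\mu)$, which---once whiskered on the left by $\mu\circ Tt_2$---coincides with the simplified form of $\omega_2$ obtained in the previous step. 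The matching of source and target 1-cells is a routine manipulation using naturality of $\mu$, monad associativity, and the $\mu$ axiom of the strong 2-monad: concretely one checks $\mu\circ Tt_2\circ\omega=\omega_{A_1,A_2}\circ(1\times\mu)$ and $\mu\circ Tt_2\circ\omega'=\mu\circ T\omega_{A_1,A_2}\circ t_2$, where $\omega,\omega'\colon TA_1\times T^2A_2\to T(A_1\times TA_2)$ are the two sides of $\Gamma_{A_1,TA_2}$.

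The main obstacle is bookkeeping. Axiom (\ref{axiommu1gamma}) is phrased in terms of $\Gamma$ rather than $\Gamma^{-1}$, so isolating $\Gamma^{-1}_{A_1,TA_2}$ requires inverting the $\Gamma_{A_1,TA_2}$ factor in the pasting while leaving the $T\Gamma_{A_1,A_2}$ factor in place; this rearrangement is purely mechanical but needs care given the size of the diagrams. The proof contains no conceptual novelty beyond the symmetry axiom and the pseudo-commutativity axioms already invoked in \Cref{pseudocomm}.
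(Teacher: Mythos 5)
Your proposal is correct and uses the same two ingredients as the paper's proof — the $\mu$-compatibility axiom for $\Gamma$ and the symmetry axiom — with only a cosmetic reordering: the paper first expands $\Gamma_{A_2,A_1}\circ(\mu\times 1)$ via the $\mu$-axiom and then applies symmetry twice (to cancel $T\Gamma_{A_1,A_2}$ against $T\Gamma_{A_2,A_1}$ and to convert the leftover conjugated $\Gamma_{TA_2,A_1}$ into $\Gamma^{-1}_{A_1,TA_2}$), whereas you de-conjugate $\Gamma_{A_2,A_1}$ to $\Gamma^{-1}_{A_1,A_2}$ up front and then read off the answer from the inverted form of axiom (\ref{axiommu1gamma}). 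Your source/target identifications $\mu\circ Tt_2\circ\omega=\omega_{A_1,A_2}\circ(1\times\mu)$ and $\mu\circ Tt_2\circ\omega'=\mu\circ T\omega_{A_1,A_2}\circ t_2$ check out, so the argument goes through.
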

\begin{proof}
By applying the strength $\mu$-axiom (\ref{axiommu1gamma}) in \Cref{pseudocomm}, we get that $\omega_2$ equals the pasting
\begin{center}
    \begin{tikzcd}[font=\tiny]
        TA_1\times T^2A_2\ar[rr,"t_2"]\ar[d,"\cong"']&[-15pt]&[-15pt]T(TA_1\times TA_2)\ar[dl,"\cong"']\ar[rr,"T\omega"name=D1]&[-15pt]&[-15pt]T^2(A_1\times A_2)\ar[ddd,"\mu"']\\
        T^2A_2\times TA_1\ar[d,"t_2"]\ar[r,"t_1"]&T(TA_2\times TA_1)\ar[d,"Tt_2"name=D3]\ar[r,"Tt_1"]&|[alias=D2]|T^2(A_1\times TA_1)\ar[r,"T^2t_2"name=R1]&T^3(A_2\times A_1)\ar[d,"T\mu"']&\\
        T(T^2A_2\times A_1\ar[d,"Tt_1"{swap}])&T^2(TA_2\times A_1)\ar[d,"\mu"']\ar[r,"Tt_1"name=R2]&T^2(A_2\times A_1)\ar[r,"T\mu"]\ar[d,"\mu"]&T^2(A_2\times A_1)\ar[d,"\mu"]\ar[ruu,"T^2\cong"']&\\
        |[alias=R3]|T^2(TA_2\times A_1)\ar[r,"\mu"']&T(TA_2\times A_1)\ar[r,"Tt_1"']&T^2(A_2\times A_1)\ar[r,"\mu"']&T(A_2\times A_1)\ar[r,"T\cong"']&T(A_1\times A_2).

        \arrow[Rightarrow,from=D1,to=R1,"T\Gamma_{A_1,A_2}",shorten <=4mm,shorten >=1mm]
        \arrow[Rightarrow,from=D2,to=R2,"T\Gamma_{A_2,A_1}",shorten <=1mm,shorten >=1mm]

        \arrow[Rightarrow,from=D3,to=R3,"\Gamma",shorten <=9mm,shorten >=7mm]
    \end{tikzcd}
\end{center}
By symmetry $\omega_2$ agrees with the whiskering
\begin{center}
\adjustbox{scale=0.9}{
    \begin{tikzcd}
        TA_1\times T^2A_2\ar[r,"\cong"]&[-15pt]T^2A_2\times TA_1\ar[r,bend left,"{\color{white}t}"name=D1]\ar[r,bend right,"{\color{white}t}"'name=R1]&[20pt]T(TA_2\times A_1)\ar[r,"Tt_1"]\ar[d,"T\cong"]&[-10pt]T^2(A_2\times A_1)\ar[r,"\mu"]\ar[d,"T^2\cong"]&[-15pt]T(A_2\times A_1)\ar[d,"T\cong"]\\[-10pt]
        &&T(A_1\times TA_2)\ar[r,"Tt_2"']&T^2(A_1\times A_2)\ar[r,"\mu"']&T(A_1\times A_2)
        \arrow[Rightarrow,from=D1,to=R1,near start,"\Gamma"',shorten <=2mm,shorten >=2mm]
    \end{tikzcd}}
\end{center}
By symmetry, the last whiskering equals the one in the lemma.
\end{proof}
\begin{lem}\label{omegais1cell}
    Let $(T,\eta,\mu,t,\Gamma)$ be a symmetric, pseudo commutative, strong 2-monad, and let $A_1,A_2$ be objects of $\K.$ Then 
    $$\omega\colon \la TA_1, TA_2\ra\to T(A_1\times A_2)$$
    is a 2-ary 1-cell in $\talg$.
\end{lem}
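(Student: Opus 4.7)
The plan is to verify, for the candidate $(\omega,\langle \omega_1,\omega_2\rangle)$, each of the axioms stipulated in \Cref{onecelltalg}: the $\eta$ axiom for $i=1,2$, the $\mu$ axiom for $i=1,2$, and the single coherence axiom for $(i,j)=(1,2)$. I would split the verification into three groups according to difficulty.

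First, the $i=1$ instances of the $\eta$ and $\mu$ axioms are essentially automatic. Since $\omega_1$ is, by definition, the identity 2-cell, both pasting equalities collapse to equalities of 1-cells in $\K$ built from $\eta$, $\mu$, $t_1$, $Tt_2$, and the naturality of $\mu$. These commute by the strong-monad compatibility conditions of \Cref{strong} together with the two equivalent expressions for $t_i^n$ in terms of $t_1$ and $t_2$ noted in the remark following the definition of strength.

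For the $i=2$ instances I would rewrite $\omega_2$ using \Cref{omega2} as the whiskering of $\Gamma^{-1}_{A_1,A_2}$ by $Tt_2$ and $\mu$. The $\eta$ axiom for $i=2$ then reduces, after whiskering by $1_{TA_1}\times \eta_{A_2}$, to axiom (\ref{axiometa2}) of \Cref{pseudocomm} applied to $\Gamma^{-1}$. The $\mu$ axiom for $i=2$ reduces analogously to axiom (\ref{axiommu1gamma}) for $\Gamma^{-1}$, combined with the naturality of $\mu$ and the 2-monad associativity $\mu\circ T\mu=\mu\circ \mu$.

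The main obstacle is the coherence axiom, which compares the pastings (\ref{coh1}) and (\ref{coh2}) with $h=\omega$ and target algebra $(T(A_1\times A_2),\mu)$. Because $\omega_1$ is an identity, the two pastings differ only by the comparison 2-cell $\Gamma^{TA_1,TA_2}_{1,2}$ appearing in (\ref{coh2}) and the whiskered $\Gamma^{-1}_{A_1,A_2}$ contributed by $\omega_2$ via \Cref{omega2}. I would apply axiom (\ref{axiommu2gamma}) of \Cref{pseudocomm}, which rewrites $\Gamma^{TA_1,TA_2}_{1,2}$ whiskered by $\mu\times 1$ in terms of $\Gamma_{A_1,A_2}$ and $T\Gamma_{A_1,A_2}$ at the base level, and then invoke the symmetry axiom (\Cref{Tsymmetric}) together with the identity $\Gamma_{2,1}=\Gamma^{-1}_{1,2}$ of \Cref{inverses} to cancel the two contributions. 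What remains is a diagram chase using naturality of $\mu$ and the standard 2-monad axioms. No new conceptual ingredient beyond those already used in the proof of \Cref{omega2} is needed; the genuine difficulty lies in tracking the many whiskerings and keeping the $\mu$-associativity bookkeeping under control.
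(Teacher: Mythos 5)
Your proposal follows essentially the same route as the paper: the $i=1$ cases are immediate because $\omega_1$ is an identity, the $i=2$ $\eta$- and $\mu$-axioms are reduced via \Cref{omega2} to axioms (\ref{axiometa2}) and (\ref{axiommu1gamma}) of \Cref{pseudocomm} (plus the modification property of $\Gamma^{-1}$), and coherence is handled by rewriting \Cref{coh2} with \Cref{omega2}, applying (\ref{axiommu2gamma}), and cancelling. The one small inaccuracy is in the last step: after applying (\ref{axiommu2gamma}) the paper cancels $\Gamma_{TA_1,TA_2}$ against its literal inverse $\Gamma^{-1}_{TA_1,TA_2}$ with no further appeal to \Cref{Tsymmetric} or \Cref{inverses} at that point --- symmetry has already entered through \Cref{omega2} --- whereas you invoke the symmetry axiom there; this does not affect the validity of the argument.
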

\begin{proof}
First we tackle coherence. By \Cref{omega2}, we can write the \Cref{coh2} for $\omega$ as
\begin{center}
\adjustbox{scale=0.9}{
\begin{tikzcd}
    &T(TA_1\times TA_2)\ar[rr,"Tt_2"name=D1]&[-15pt]&[-15pt]T^2(TA_1\times TA_2)\ar[rd,"\mu"]&\\[-10pt]
    T^2A_1\times T^2A_2\ar[d,"\mu\times 1"']\ar[ru,"t_1"]\ar[r,"t_2"']&|[alias=R1]|T(T^2A_1\times TA_2)\ar[d,"T(\mu\times 1)"]\ar[rr,"Tt_1"']&&T^2(TA_1\times TA_2)\ar[r,"\mu"']\ar[d,"T^2\omega"]&T(TA_1\times TA_2)\ar[d,"Tt_\omega"]\\
    TA_1\times T^2A_2\ar[d,"t_1"']\ar[r,"t_2"]&T(TA_1\times TA_2)\ar[d,"Tt_1"]&&T^3(A_1\times A_2)\ar[d,"T\mu"]\ar[r,"\mu"]&T^2(A_1\times A_2)\ar[ddd,"\mu"]\\
    T(A_1\times T^2A_2\ar[dd,"Tt_2"{swap,name=R2}])&|[alias=D2]|T^2(A_1\times TA_2)\ar[rd,"T^2t_2"]\ar[dd,"\mu"]&&T^2(A_1\times A_2)\ar[rdd,"\mu"]&\\[-20pt]
    &&T^3(A_1\times A_2)\ar[rd,"\mu"]\ar[ru,"T\mu"]&&\\[-20pt]
    T^2(A_1\times TA_2)\ar[r,"\mu"']&T(A_1\times TA_2)\ar[rr,"Tt_2"']&&T^2(A_1\times A_2)\ar[r,"\mu"]&T(A_1\times A_2).
    \arrow[Rightarrow,from=D1,to=R1,"\Gamma_{TA_1,TA_2}",shorten <=4mm,shorten >=4mm]
    \arrow[Rightarrow,from=D2,to=R2,"\Gamma^{-1}",shorten <=4mm,shorten >=4mm]
\end{tikzcd}}
\end{center}
By  (\ref{axiommu2gamma}) in \Cref{pseudocomm},  this equals the pasting
\begin{center}
\adjustbox{scale=0.9}{
\begin{tikzcd}
	&[-5pt] {T(TA_1\times T^2A_2)} &[-5pt] {T^2(TA_1\times TA_2)}&[-5pt]&[-5pt] \\[-10pt]
	{T^2A_1\times T^2A_2} & {T(T^2A_1\times TA_2)} & {T(TA_1\times TA_2)} & {T(TA_1\times TA_2)} & {T^2(A_1\times TA_2)} \\
	{T(TA_1\times T^2A_2)} & {T^2(TA_1\times TA_2)} & {T^3(A_1\times TA_2)} & {T(A_1\times TA_2)} & {T^3(A_1\times A_2)} \\
	{T^2(A_1\times T^2A_2)} & {T^3(A_1\times TA_2)} & {T^2(A_1\times TA_2)} & {T^2(A_1\times A_2)} & {T^2(A_1\times A_2)} \\[-10pt]
	&& {T^3(A_1\times A_2)} & {T^2(A_1\times A_2)} & {T(A_1\times A_2).}
	\arrow["{Tt_2}", from=1-2, to=1-3]
	\arrow["{\Gamma_{TA_1,TA_2}}"'{pos=0.9}, between={0.3}{0.7}, Rightarrow, from=1-3, to=2-2]
	\arrow["\mu", from=1-3, to=2-4]
	\arrow["{t_1}", from=2-1, to=1-2]
	\arrow["{t_2}"', from=2-1, to=2-2]
	\arrow["{t_1}"', from=2-1, to=3-1]
	\arrow["{Tt_1}"', from=2-2, to=2-3]
	\arrow["{\Gamma^{-1}_{TA_1,TA_2}}"{pos=0.1}, between={0.3}{0.7}, Rightarrow, from=2-2, to=3-1]
	\arrow["\mu", from=2-3, to=2-4]
	\arrow["{Tt_1}", from=2-4, to=2-5]
	\arrow["\mu", from=2-5, to=3-4]
	\arrow["{T^2t_2}", from=2-5, to=3-5]
	\arrow["{Tt_2}"', from=3-1, to=3-2]
	\arrow["{Tt_1}"', from=3-1, to=4-1]
	\arrow["\mu"', from=3-2, to=2-4]
	\arrow["{T^2t_1}"', from=3-2, to=3-3]
	\arrow["\mu", from=3-3, to=2-5]
	\arrow["{T\Gamma^{-1}}"{pos=0.4}, between={0.4}{0.6}, Rightarrow, from=3-3, to=4-1]
	\arrow["{T\mu}"', from=3-3, to=4-3]
	\arrow["{Tt_2}", from=3-4, to=4-4]
	\arrow["\mu", from=3-5, to=4-4]
	\arrow["{T\mu}", from=3-5, to=4-5]
	\arrow["{T^2t_2}"', from=4-1, to=4-2]
	\arrow["{T\mu}"', from=4-2, to=4-3]
	\arrow["\mu"', from=4-3, to=3-4]
	\arrow["{T^2t_2}"', from=4-3, to=5-3]
	\arrow["\mu"', from=4-4, to=5-5]
	\arrow["\mu", from=4-5, to=5-5]
	\arrow["\mu", from=5-3, to=4-4]
	\arrow["{T\mu}"', from=5-3, to=5-4]
	\arrow["\mu"', from=5-4, to=5-5]
\end{tikzcd}}
\end{center}
The 2-cells $\Gamma_{TA_1,TA_2}$ and its inverse cancel out, so the previous pasting diagram equals \Cref{coh1} by \Cref{omega2}. Now we tackle the $\eta$ and $\mu$ axioms. For $i=1$ there is nothing to prove since $\omega_1$ is the identity.  The $\eta$ axiom for $i=2$  follows by \Cref{omega2} and  (\ref{axiometa2}) in \Cref{pseudocomm}. Let's prove that $\omega$ satisfies the $\mu$ axiom for $i=2.$ We start from pasting \Cref{mu1}, which by \Cref{omega2} we can express as the whiskering
\begin{center}
\adjustbox{scale=0.9}{\begin{tikzcd}
	{TA_1\times T^3A_2} & {TA_1\times T^2A_2} &[10pt] {T(A_1\times TA_2)} &[10pt] {T(A_1\times A_2).} & {}
	\arrow["{1\times \mu}", from=1-1, to=1-2]
	\arrow[""{name=0, anchor=center, inner sep=0}, curve={height=-12pt}, from=1-2, to=1-3]
	\arrow[""{name=1, anchor=center, inner sep=0}, curve={height=12pt}, from=1-2, to=1-3]
	\arrow["{\mu\circ Tt_2}", from=1-3, to=1-4]
	\arrow["{\Gamma^{-1}}", between={0.2}{0.8}, Rightarrow, from=0, to=1]
\end{tikzcd}}
    \end{center}
    By (\ref{axiommu1gamma}) in \Cref{pseudocomm} the previous diagram equals the following:
\begin{center}
\adjustbox{scale=0.9}{
    \begin{tikzcd}
        &[5pt]TA_1\times T^3A_2\ar[dl,"1\times T\mu"']\ar[d,"t_1"']\ar[r,"t_2"]&[-15pt]T(TA_1\times T^2A_2)\ar[d,"Tt_1"']\ar[r,"Tt_2"]&[-15pt]|[alias=D1]|T^2(TA_1\times TA_2)\ar[r,"T^2t_1"]&[-15pt]T^3(A_1\times TA_2)\ar[d,"T\mu"]\\
        TA_1\times T^2A_2\ar[d,"t_1"']&T(A_1\times T^3A_2\ar[dl,"T(1\times T\mu)"']\ar[d,"Tt_2"{swap,name=R2}])&|[alias=D2]|T^2(A_1\times T^2A_2)\ar[d,"\mu"']\ar[r,"T^2t_2"{swap,name=R1}]&T^3(A_1\times TA_2)\ar[r,"T\mu"']\ar[d,"\mu"]&T^2(A_1\times TA_2)\ar[d,"\mu"]\\
        T(A_1\times T^2A_2)\ar[rdd,"Tt_2"']&T^2(A_1T^2A_2)\ar[dd,"T^2(1\times \mu)"]\ar[r,"\mu"']&T(A_1\times T^2A_2)\ar[r,"Tt_2"']\ar[dd,"T(1\times \mu)"]&T^2(A_1\times TA_2)\ar[r,"\mu"]\ar[d,"T^2t_2"]&T(A_1\times TA_2)\ar[d,"Tt_2"]\\[-12pt]
        &&&T^3(A_1\times A_2)\ar[r,"\mu"']\ar[d,"T\mu"]&T^2(A_1\times A_2\ar[d,"\mu"])\\[-12pt]
        &T^2(A_1\times TA_2)\ar[r,"\mu"']&T(A_2 \times TA_2)\ar[r,"Tt_2"']&T^2(A_1\times A_2)\ar[r,"\mu"']&T(A_1\times A_2).
        \arrow[Rightarrow,from=D1,to=R1,"T\Gamma^{-1}"',shorten <=3mm,shorten >=3mm]
        \arrow[Rightarrow,from=D2,to=R2,"\Gamma^{-1}",shorten <=3mm,shorten >=3mm]
    \end{tikzcd}}
\end{center}
Here we decorated the diagram coming from \Cref{pseudomu2} with some extra commutative squares that do not change the pasting. By using that $\Gamma^{-1}$ is a modification, we get that the previous pasting equals
\begin{center}
\adjustbox{scale=0.9}{
    \begin{tikzcd}
        &[-10pt]TA_1\times T^3A_2\ar[dl,"1\times T\mu"']\ar[r,"t_2"]&[-10pt]T(TA_1\times T^2A_2)\ar[ld,"T(1\times \mu)"']\ar[d,"Tt_1"']\ar[r,"Tt_2"]&[-10pt]|[alias=D1]|T^2(TA_1\times TA_2)\ar[r,"T^2t_1"]&[-10pt]T^3(A_1\times TA_2)\ar[d,"T\mu"]\\
        TA_1\times T^2A_2\ar[r,"t_2"]\ar[d,"t_1"']&T(TA_1\times TA_2)\ar[d,"Tt_1"']&T^2(A_1\times T^2A_2)\ar[d,"\mu"]\ar[dl,"T^2(1\times \mu)"']\ar[r,"T^2t_2"{swap,name=R1}]&T^3(A_1\times TA_2)\ar[r,"T\mu"']\ar[d,"\mu"]&T^2(A_1\times TA_2)\ar[d,"\mu"]\\
        T(A_1\times T^2A_2)\ar[dd,"Tt_2"{swap,name=R2}]&|[alias=D2]|T^2(A_1\times TA_2)\ar[dd,"\mu"]&T(A_1\times T^2A_2)\ar[ddl,"T(1\times \mu)"]\ar[r,"Tt_2"']&T^2(A_1\times TA_2)\ar[r,"\mu"]\ar[d,"T^2t_2"]&T(A_1\times TA_2)\ar[d,"Tt_2"]\\[-12pt]
        &&&T^3(A_1\times A_2)\ar[r,"\mu"']\ar[d,"T\mu"]&T^2(A_1\times A_2)\ar[d,"\mu"]\\[-12pt]
        T^2(A_1\times TA_2\ar[r,"\mu"'])&T(A_1\times TA_2)\ar[rr,"Tt_2"']&&T^2(A_1\times A_2)\ar[r,"\mu"']&T(A_1\times A_2).
        \arrow[Rightarrow,from=D1,to=R1,"T\Gamma^{-1}"',shorten <=3mm,shorten >=3mm]
        \arrow[Rightarrow,from=D2,to=R2,"\Gamma^{-1}",shorten <=5mm,shorten >=5mm]
    \end{tikzcd}}
\end{center}
The last pasting equals \Cref{mu2} for $\omega$ by two applications of \Cref{omega2} and a change in 1-cells. The previous diagram has the correct source 1-cell since the 1-cells $\mu\circ (T\mu )\circ (T^2t_2)$ and $\mu\circ (Tt_2)\circ\mu \colon T^2(A_1\times TA_2)\to T(A_1\times A_2)$ are equal. 
        
        

\end{proof}
\begin{lem}{(Associativity of $\omega$)}\label{omegaass}
    Let $(T,\eta,\mu,t,\Gamma)$ be a symmetric, pseudo commutative, strong 2-monad. Let $A,B,C$ be objects of $\K,$ then
    $$\gamma(\omega_{A,B\times C};1_A,\omega_{B,C})=\gamma(\omega_{A\times B,C};\omega_{A,B},1_C),$$
    that is, the following multicategorical diagram commutes
    \begin{center}
        \begin{tikzcd}
            \la TA,TB,TC\ra\ar[d,"\la 1_A{,}\omega_{B,C}\ra"']\ar[r,"\la\omega_{A,B}{,}1_C\ra"]&\la T(A\times B),TC\ra\ar[d,"\omega_{A\times B,C}"]\\
            \la TA,T(B\times C)\ra\ar[r,"\omega_{A,B\times C}"']&T(A\times B\times C).
        \end{tikzcd}
    \end{center}
\end{lem}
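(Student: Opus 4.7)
The plan is to verify the equality by showing that (a) the underlying 1-cells in $\K$ coincide on both sides, and (b) the structure 2-cells at each of the three inputs coincide. For (a), I would expand both composites using $\omega = \mu\circ Tt_2\circ t_1$. The left side unfolds to $\mu\circ Tt_2\circ t_1\circ (1_{TA}\times \mu)\circ (1_{TA}\times Tt_2)\circ (1_{TA}\times t_1)$ and the right to $\mu\circ Tt_2\circ t_1\circ (\mu\times 1_{TC})\circ (Tt_2\times 1_{TC})\circ (t_1\times 1_{TC})$. Applying naturality of $t_1$ in its second variable, the strong-monad $\mu$-axiom from \Cref{strong}, naturality of $\mu$, and monad associativity $\mu\circ T\mu = \mu\circ\mu$, both reduce to the canonical 1-cell $TA\times TB\times TC\to T(A\times B\times C)$ built out of iterated strengths followed by a single $\mu$; the associativity axiom for strength ensures this canonical form is well defined regardless of the bracketing.

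For (b), I would compute the component 2-cells via \Cref{compositiontalg}. Label the three inputs $s=1,2,3$. Since $\omega_1$ is an identity 2-cell by definition, the pieces contributed by the first position of either occurrence of $\omega$ are trivial. Thus at $s=1$ on the left (where $j=1$, $d=1$, so only $(1_A)_1$ and $(\omega_{A,B\times C})_1$ enter) the resulting 2-cell is the identity, and similarly at $s=3$ on the right (where $j=2$, $d=1$, so only $(1_C)_1$ and $(\omega_{A\times B,C})_1$-type data contribute, again through an $\omega_1$). One then checks that on the opposite sides ($s=1$ right, $s=3$ left) the same identity is obtained after combining the relevant $\omega_1$'s.

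The only nontrivial components are at $s=3$ on the left, at $s=1$ on the right, and at $s=2$ on both sides, each of which is, after applying \Cref{omega2}, a whiskering of $\Gamma^{-1}$-pastings. I would rewrite each of these as $\Gamma_{i,j}$-type 2-cells in the 3-fold product $TA\times TB\times TC$ (in the sense of \Cref{partition}) by choosing convenient partitions. The two composites on the two sides then correspond to two different partitions realizing the same $\Gamma_{i,j}$ for appropriate $i<j$ in $\{1,2,3\}$, and equality follows directly from \Cref{ij}.

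The hardest step will be the middle input $s=2$: on the left it mixes $(\omega_{A,B\times C})_2$ (which is $\omega_2$ built from $\Gamma^{-1}_{A,B\times C}$) with the identity piece from the inner $\omega_{B,C}$, while on the right it mixes the corresponding piece of $(\omega_{A,B})_2$ with part of $(\omega_{A\times B,C})_2$. Showing these two pastings coincide requires expanding both via \Cref{omega2}, recognizing each as a particular $\Gamma^{K}_{1,3}$ with $K$ placing the bars respectively as $\mid TA\mid\cdot\cdot\cdot TC\mid$ and $\mid\cdot\cdot\cdot TA\mid TC\mid$ after symmetry, and invoking the redundancies among axioms (\ref{axiomstrength1})--(\ref{axiomstrength3}) of \Cref{pseudocomm} together with \Cref{ij} to move the bars. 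Once all three inputs are verified, the equality of multilinear maps follows.
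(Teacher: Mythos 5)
Your overall architecture (match the underlying 1-cells, then compare the three component 2-cells) is the same as the paper's, and part (a) is fine. But part (b) contains both a bookkeeping error and a genuine gap. The bookkeeping: on the right-hand side $\gamma(\omega_{A\times B,C};\omega_{A,B},1_C)$ the component at $s=3$ has $j=2$, $d=1$, so by \Cref{compositiontalg} it pastes $(1_C)_1$ (an identity) with $(\omega_{A\times B,C})_2$ --- not with $(\omega_{A\times B,C})_1$ --- and is therefore \emph{not} an identity. Likewise $s=3$ on the left pastes the two nontrivial cells $(\omega_{B,C})_2$ and $(\omega_{A,B\times C})_2$, and $s=2$ on the right involves only $(\omega_{A,B})_2$ together with the identity $(\omega_{A\times B,C})_1$, not ``part of $(\omega_{A\times B,C})_2$.'' The correct tally is: $s=1$ is an identity on both sides, while $s=2$ and $s=3$ are nontrivial on both sides and each needs its own argument. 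As written, your text asserts that $s=3$ reduces to identities and, two sentences later, lists it among the nontrivial cases, so the case analysis has to be redone before anything else.

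The substantive gap is the claim that the nontrivial comparisons ``follow directly from \Cref{ij}'' by exhibiting both sides as $\Gamma_{i,j}^{K}$ for two partitions $K$. The cells of \Cref{partition} are 2-cells over 1-cells $A_1\times\cdots\times TA_i\times\cdots\times TA_j\times\cdots\times A_n\to T(A_1\times\cdots\times A_n)$ in which exactly two inputs carry a $T$ and the rest carry none; \Cref{ij} only says that the placement of the bars in such a configuration is immaterial. The components you must compare live over 1-cells such as $TA\times TB\times T^2C\to T(A\times B\times C)$: every input carries a $T$, one carries a $T^2$, and the free-algebra multiplications $\mu$ are composed in. Relating, say, the $\Gamma^{-1}_{A\times B,\,TC}$ sitting inside $(\omega_{A\times B,C})_2$ to the pasting of $\Gamma^{-1}$'s produced by the other side forces you through the $\mu$-compatibility axioms (\ref{axiommu1gamma}) and (\ref{axiommu2gamma}) of \Cref{pseudocomm} and the modification property of $\Gamma$ in \Cref{strongdiagram}, none of which is subsumed by \Cref{ij}. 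This is exactly how the paper proceeds: the $s=3$ comparison opens with an application of (\ref{axiommu2gamma}) and then repeatedly invokes the modification property together with (\ref{axiomstrength1}) and (\ref{axiomstrength2}), while the $s=2$ comparison hinges on (\ref{axiomstrength3}), modification naturality, and several nontrivial identities between composite 1-cells. Without these ingredients the proposed reduction does not go through.
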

\begin{proof}
First of all by associativity of $t,$ the strength axioms, the monad axioms and naturality of various 2-natural transformations, the corresponding 1-cells $TA\times TB\times TC\to T(A\times B\times C)$ are equal. We must show that the 2-cell constrains are equal, i.e., $\gamma(\omega;\omega,1)_i=\gamma(\omega ;1,\omega)_i$ for $i=1,2,3.$ For $i=1$ this follows since both $\gamma(\omega;\omega,1)_1$ and $\gamma(\omega;1,\omega)_1$ are identities. For $i=2,$ we have from \Cref{compositiontalg} and by \Cref{omega2} that $\gamma(\omega;\omega,1)_2$ is the 2-cell 
\begin{center}
   \adjustbox{scale=0.85}{
\begin{tikzcd}
	{TA\times T^2B\times TC} &[20pt] {T(A\times TB)\times TC} &[-15pt] {T^2(A\times B)\times TC} &[-15pt] {T(A\times B)\times TC} &[-15pt] {T(A\times B\times TC)} \\
	& {T(A\times TB\times TC)} & {T(T(A\times B)\times TC)} & {T^2(A\times B\times TC)} & {T(A\times B\times TC),}
	\arrow[""{name=0, anchor=center, inner sep=0}, curve={height=-12pt}, from=1-1, to=1-2]
	\arrow[""{name=1, anchor=center, inner sep=0}, curve={height=12pt}, from=1-1, to=1-2]
	\arrow["{Tt_2\times 1}"', from=1-2, to=1-3]
	\arrow["{t_1}"', from=1-2, to=2-2]
	\arrow["{\mu\times 1}", from=1-3, to=1-4]
	\arrow["{t_1}", from=1-3, to=2-3]
	\arrow["{t_1}", from=1-4, to=1-5]
	\arrow["{\mu\circ Tt_3}", from=1-5, to=2-5]
	\arrow["{T(t_2\times 1)}"', from=2-2, to=2-3]
	\arrow["{Tt_1}"', from=2-3, to=2-4]
	\arrow["\mu"', from=2-4, to=1-5]
	\arrow["{\Gamma^{-1}\times 1}"{pos=0.1}, between={0.2}{0.8}, Rightarrow, from=0, to=1]
\end{tikzcd}}
\end{center}
where $\mu\circ Tt_3\circ t_1=\omega\colon T(A\times B)\times TC\to T(A\times B\times C).$ We can then apply (\ref{axiomstrength3}) in \Cref{pseudocomm},  to get
\begin{center}
\adjustbox{scale=0.85}{
\begin{tikzcd}
	{TA\times T^2B\times TC} & {TA\times T(TB\times TC)} & {T(A\times TB\times TC)} &[60pt] {T(A\times B\times C).}
	\arrow["{1\times t_1}", from=1-1, to=1-2]
	\arrow[""{name=0, anchor=center, inner sep=0}, curve={height=-12pt}, from=1-2, to=1-3]
	\arrow[""{name=1, anchor=center, inner sep=0}, curve={height=12pt}, from=1-2, to=1-3]
	\arrow["{\mu\circ Tt_3\circ \mu \circ Tt_1\circ T(t_2\times 1)}", from=1-3, to=1-4]
	\arrow["{\Gamma^{-1}}", between={0.2}{0.8}, Rightarrow, from=0, to=1]
\end{tikzcd}}
\end{center}
Since $\mu \circ Tt_3 \circ\mu \circ Tt_1 \circ T(t_2\times 1)$ equals $\mu\circ Tt_2\circ T(1\times \mu)\circ T(1\times Tt_2)\circ T(1\times t_1)$ as 1-cells $T(A\times TB\times TC)\to T(A\times B\times C)$ by associativity of $t,$ strength axioms for $\mu$ and monad axioms for $\mu,$ we can write the previous pasting as
\begin{center}
\adjustbox{scale=0.85}{
    \begin{tikzcd}
        TA\times T^2B\times TC\ar[r,"1\times t_1"]&[-10pt]TA\times T(TB\times TC)\ar[ddl,"1\times Tt_1"']\ar[r,"t_2"]\ar[d,"t_1"']&[-15pt]T(TA\times TB\times TC)\ar[r,"Tt_1"]&[-15pt]|[alias=D1]|T^2(A\times TB\times TC)\ar[d,"\mu"]\\
        &|[alias=R1]|T(A\times T(TB\times TC))\ar[r,"Tt_2"{swap}]\ar[d,"T(1\times Tt_1)"]&T^2(A\times TB\times TC)\ar[r,"\mu"']\ar[d,"T^2(1\times t_1)"]&T(A\times TB\times TC)\ar[d,"T(1\times t_1)"]\\
        TA\times T^2(B\times TC)\ar[d,"1\times T^2t_2"']\ar[r,"t_1"]&T(A\times T^2(B\times TC))\ar[r,"Tt_2"]\ar[d,"T(1\times T^2t_2)"]&T^2(A\times T(B\times TC))\ar[r,"\mu"]\ar[d,"T^2(1\times Tt_2)"]&T(A\times T(B\times TC))\ar[d,"T(1\times Tt_2)"]\\
        TA\times T^3(B\times C)\ar[d,"1\times T\mu"']\ar[r,"t_1"]&T(A\times T^3(B\times C))\ar[r,"Tt_2"]\ar[d,"T(1\times T\mu)"]&T^2(A\times T^2(B\times C))\ar[r,"\mu"]\ar[d,"T^2(1\times \mu)"]&T(A\times T^2(B\times C))\ar[d,"T(1\times \mu)"]\\
        TA\times T^2(B\times C)\ar[r,"t_1"']&T(A\times T^2(B\times C))\ar[r,"Tt_2"']&T^2(A\times T(B\times C))\ar[r,"\mu"']&T(A\times T(B\times C))\ar[r,"\mu \circ Tt_2"']&[-15pt]T(A\times B\times C).
        \arrow[Rightarrow,from=D1,to=R1,"\Gamma^{-1}"',shorten <=12mm,shorten >=12mm]
    \end{tikzcd}}
\end{center}
Since $\Gamma^{-1}$ is a modification, our diagram equals
\begin{center}
\adjustbox{scale=0.85}{
\begin{tikzcd}
	{TA\times T^2B\times TC} &[80pt] {TA\times T^2(B\times C)} & {T(A\times T(B\times C))} & {T(A\times B\times C).}
	\arrow["{(1\times T\mu)\circ (1\times T^2t_2)\circ (1\times Tt_1)\circ (1\times t_1)}", from=1-1, to=1-2]
	\arrow[""{name=0, anchor=center, inner sep=0}, curve={height=-12pt}, from=1-2, to=1-3]
	\arrow[""{name=1, anchor=center, inner sep=0}, curve={height=12pt}, from=1-2, to=1-3]
	\arrow["{\mu \circ Tt_2}", from=1-3, to=1-4]
	\arrow["{\Gamma^{-1}}", between={0.2}{0.8}, Rightarrow, from=0, to=1]
\end{tikzcd}}
\end{center}
By an application of \Cref{omega2}, the previous whiskering is precisely $\gamma(\omega;1,\omega)_2$.\\
Let's now prove that $\gamma(\omega;\omega,1)_3=\gamma(\omega;1,\omega)_3.$ \Cref{compositiontalg} and an application of \Cref{omega2} give us that $\gamma(\omega;\omega,1)_3$ is the whiskering
\begin{center}
\adjustbox{scale=0.85}{
\begin{tikzcd}
	{TA\times TB\times T^2C} &[20pt] {T^2(A\times B)\times T^2C} &[-15pt] {T(A\times B)\times T^2C} & {T(A\times B\times TC)} &[-10pt] {T(A\times B\times C).}
	\arrow["{(Tt_2\times 1)\circ(t_1\times 1)}", from=1-1, to=1-2]
	\arrow["{\mu\times 1}", from=1-2, to=1-3]
	\arrow[""{name=0, anchor=center, inner sep=0}, curve={height=-12pt}, from=1-3, to=1-4]
	\arrow[""{name=1, anchor=center, inner sep=0}, curve={height=12pt}, from=1-3, to=1-4]
	\arrow["{\mu\circ Tt_3}", from=1-4, to=1-5]
	\arrow["{\Gamma^{-1}}", between={0.2}{0.8}, Rightarrow, from=0, to=1]
\end{tikzcd}}
\end{center}
By an application of (\ref{axiommu2gamma}) in  \Cref{pseudocomm},  we then have that $\gamma(\omega;\omega,1)_3$ equals
\begin{center}
\begin{equation}\label{delta3}
   \adjustbox{scale=0.85}{\begin{tikzcd}
        TA\times TB\times T^2C\ar[r,"(Tt_2\times 1)(t_1\times 1)"]&[10pt]T^2(A\times B)\times T^2C\ar[r,"t_2"]\ar[d,"t_1"']&[-15pt]|[alias=D1]|T(T^2(A\times B)\times TC)\ar[r,"Tt_1"]&[-15pt]T^2(T(A\times B)\times TC)\ar[d,"\mu"]\\
        &T(T(A\times B)\times T^2C)\ar[d,"t_1"]\ar[r,"Tt_2"{swap,name=R1}]&T^2(T(AB)\times TC)\ar[r,"\mu"']\ar[d,"T^2t_1"]&T(T(A\times B)\times TC)\ar[d,"Tt_1"]\\
        &T^2(A\times B\times T^2C)\ar[d,"Tt_2"{swap,name=R2}]&|[alias=D2]|T^3(A\times B\times TC)\ar[r,"\mu"']\ar[d,"T\mu"]&T^2(A\times B\times TC)\ar[d,"\mu"]\\
        &T^3(A\times B\times TC)\ar[r,"T\mu"']&T^2(A\times B\times TC)\ar[r,"\mu"]&T(A\times B\times TC)\ar[r,"\mu\circ Tt_3"']&[-15pt]T(A\times B\times C).
        \arrow[Rightarrow,from=D1,to=R1,"\Gamma^{-1}"',shorten <=4mm,shorten >=4mm]
        \arrow[Rightarrow,from=D2,to=R2,"T\Gamma^{-1}",shorten <=6mm,shorten >=6mm]
    \end{tikzcd}}
    \end{equation}
\end{center}
Now, by \Cref{compositiontalg} and two applications of \Cref{omega2}, $\gamma(\omega;1,\omega)_3$ is the vertical composition of the whiskering
\begin{center}
    \begin{equation}\label{gamma3-2}
    \adjustbox{scale=0.85}{
\begin{tikzcd}
	{TA\times TB\times T^2C} &[20pt] {TA\times T(B\times TC)} &[60pt] {T(A\times B\times C),}
	\arrow[""{name=0, anchor=center, inner sep=0}, curve={height=-12pt}, from=1-1, to=1-2]
	\arrow[""{name=1, anchor=center, inner sep=0}, curve={height=12pt}, from=1-1, to=1-2]
	\arrow["{\mu\circ Tt_2\circ t_1\circ (1\times \mu)\circ(1\times Tt_2)}", from=1-2, to=1-3]
	\arrow["{1\times \Gamma^{-1}}",pos={0.1}, between={0.2}{0.8}, Rightarrow, from=0, to=1]
\end{tikzcd}}
    \end{equation}
\end{center}
with the whiskering
\begin{center}
    \begin{equation}\label{gamma3-1}
     \adjustbox{scale=0.85}{
\begin{tikzcd}
	{TA\times TB\times TC} &[90pt] {TA\times T^2(B\times C)} & {T(A\times T(B\times C))} &[-10pt]{T(A\times B\times C).}
	\arrow["{(1\times T\mu)\circ(1\times T^2t_2)\circ(1\times Tt_1)\times (1\times t_2)}", from=1-1, to=1-2]
	\arrow[""{name=0, anchor=center, inner sep=0}, curve={height=-12pt}, from=1-2, to=1-3]
	\arrow[""{name=1, anchor=center, inner sep=0}, curve={height=12pt}, from=1-2, to=1-3]
	\arrow["{\mu\circ Tt_2}", from=1-3, to=1-4]
	\arrow["{\Gamma^{-1}}", between={0.2}{0.8}, Rightarrow, from=0, to=1]
\end{tikzcd}}
    \end{equation}
\end{center}
We will show that \Cref{gamma3-1} equals the whiskering
\begin{center}
\begin{equation}\label{3.2.4}
   \adjustbox{scale=0.85}{
\begin{tikzcd}
	{TA\times TB\times T^2C} &[20pt] {T^2(A\times B)\times T^2C} & {T(T(A\times B)\times TC)} &[20pt] {T(A\times B\times C)}
	\arrow["{(Tt_2\times 1)\circ(t_1\times 1)}", from=1-1, to=1-2]
	\arrow[""{name=0, anchor=center, inner sep=0}, curve={height=-12pt}, from=1-2, to=1-3]
	\arrow[""{name=1, anchor=center, inner sep=0}, curve={height=12pt}, from=1-2, to=1-3]
	\arrow["{\mu\circ Tt_3\circ \mu\circ Tt_1}", from=1-3, to=1-4]
	\arrow["{\Gamma^{-1}}", between={0.2}{0.8}, Rightarrow, from=0, to=1]
\end{tikzcd}}
\end{equation}
\end{center}
comming from \Cref{delta3}, as well as an analogous statement for \Cref{gamma3-2}. We can decorate \Cref{3.2.4} with some extra commutative squares
\begin{center}
    \adjustbox{scale=0.85}{\begin{tikzcd}
        TA\times TB\times T^2 C\ar[r,"t_1\times 1"']&[-20pt] T(A\times TB)\times T^2C\ar[r,"t_2"]
    \ar[d,"Tt_2\times 1"']&[-20pt] T(T(A\times TB)\times TC)\ar[r,"Tt_1"]\ar[d,"T(Tt_2 \times 1)"]&[-20pt]T^2(A\times TB\times TC)\ar[d,"T^2(t_2\times 1)"']\ar[rd,"\mu"]&\\
         &T^2(A\times B)\times T^2C\ar[r,"t_2"]\ar[d,"t_1"']&T(T^2(A\times B)\times TC)\ar[r,"Tt_1"]&|[alias=D1]|T^2(T(A\times B)\times TC)\ar[d,"\mu"]&[-20pt]T(A\times TB\times TC)\ar[ld,"T(t_2\times 1)"]\\  
        &|[alias=R1]|T(T(A\times B)\times T^2C)\ar[r,"Tt_2"{swap}]&T^2(T(AB)\times TC)\ar[r,"\mu"']&T(T(A\times B)\times TC)\ar[r,"\mu\circ T t_3\circ\mu\circ Tt_1"']&T(A\times B\times TC),
\arrow[Rightarrow,from=D1,to=R1,"\Gamma^{-1}"',shorten <=14mm,shorten >=14mm]
    \end{tikzcd}}
\end{center}
so that we can apply the fact that $\Gamma^{-1}$ is a modification to get
\begin{center}
\adjustbox{scale=0.85}{\begin{tikzcd}
	{TA\times TB\times T^2C} & {T(A\times TB)\times T^2C} &[30pt] {T(A\times TB\times TC)} &[50pt] {T(A\times B\times C).}
	\arrow["{t_1\times 1}", from=1-1, to=1-2]
	\arrow[""{name=0, anchor=center, inner sep=0}, curve={height=-12pt}, from=1-2, to=1-3]
	\arrow[""{name=1, anchor=center, inner sep=0}, curve={height=12pt}, from=1-2, to=1-3]
	\arrow["{\mu \circ Tt_3\circ \mu\circ Tt_1\circ T(t_2\times 1)}", from=1-3, to=1-4]
	\arrow["{\Gamma^{-1}}", between={0.2}{0.8}, Rightarrow, from=0, to=1]
\end{tikzcd}}
\end{center}
By (\ref{axiomstrength2}) in \Cref{pseudocomm} this pasting becomes
\begin{center}
\begin{equation}\label{3.4.5}
\adjustbox{scale=0.85}{\begin{tikzcd}
	{TA\times TB\times T^2C} & {TA\times T(TB\times TC)} &[20pt] {T(A\times TB\times TC)} &[40pt] {T(A\times B\times C).}
	\arrow["{1\times t_2}", from=1-1, to=1-2]
	\arrow[""{name=0, anchor=center, inner sep=0}, curve={height=-12pt}, from=1-2, to=1-3]
	\arrow[""{name=1, anchor=center, inner sep=0}, curve={height=12pt}, from=1-2, to=1-3]
	\arrow["{\mu\circ Tt_3\circ \mu\circ Tt_1\circ T(t_2\times 1)}", from=1-3, to=1-4]
	\arrow["{\Gamma^{-1}}", between={0.2}{0.8}, Rightarrow, from=0, to=1]
\end{tikzcd}}
\end{equation}
\end{center}
Next, we decorate diagram (\ref{3.4.5}) with some commutative squares without altering the pasting. We are using that $\mu\circ Tt_3\circ\mu\circ Tt_1\circ T(t_2\times 1)$ is equal to
$\mu\circ Tt_2\circ T(1\times \mu )\circ T(1\times Tt_2)\circ T(1\times t_1)$ as 1-cells from $T(A\times TB\times TC)$ to $ T(A\times B\times C).$ 
\begin{center}
\adjustbox{scale=0.85}{
    \begin{tikzcd}
        TA\times TB\times T^2C\ar[r,"1\times t_2"]&[-15pt]TA\times T(TB\times TC)\ar[ddl,"1\times Tt_1"']\ar[r,"t_2"]\ar[d,"t_1"{swap}]&[-15pt]T(TA\times TB\times TC)\ar[r,"Tt_1"]&[-15pt]|[alias=D1]|T^2(A\times TB\times TC)\ar[d,"\mu"]\\
        
        &|[alias=R1]|T(A\times T(TB\times TC))\ar[d,"T(1\times Tt_1)"]\ar[r,"Tt_2"']&T^2(A\times TB\times TC)\ar[d,"T^2(1\times t_1)"]\ar[r,"\mu"']&T(A\times TB\times TC)\ar[d,"T(1\times t_1)"]\\

        TA\times T^2(B\times TC)\ar[d,"1\times T^2t_2"']\ar[r,"t_1"']&T(A\times T^2(B\times TC))\ar[d,"T(1\times T^2t_2)"]\ar[r,"Tt_2"']&T^2(A\times T(B\times TC))\ar[r,"\mu"']\ar[d,"T^2(1\times Tt_2)"]&T(A\times T(B\times TC)\ar[d,"T(1\times Tt_2)"]
        \\

        TA\times T^3(B\times C)\ar[d,"1\times T\mu"']\ar[r,"t_1"']&T(A\times T^2(B\times C))\ar[r,"Tt_2"']\ar[d,"T(1\times T\mu)"']&T^2(A\times T^2(B\times C))\ar[r,"\mu"']\ar[d,"T^2(1\times \mu)"']&T(A\times T^2(B\times C))\ar[d,"T(1\times \mu)"]\\
        
        TA\times T^2(B\times C)\ar[r,"t_1"']&T(A\times T(B\times C))\ar[r,"Tt_2"']&T^2(A\times T(B\times C))\ar[r,"\mu"']&T(A\times T(B\times C))\ar[r,"\mu \circ Tt_2"']&[-10pt]T(A\times B\times C).
        \arrow[Rightarrow,from=D1,to=R1," \Gamma^{-1}",shorten <=12mm,shorten >=12mm]
    \end{tikzcd}}
\end{center}
We then get \Cref{gamma3-1} since $\Gamma^{-1}$ is a modification. To finish the proof we just have to show that \Cref{gamma3-2} equals the following whiskering coming from \Cref{delta3}
\begin{center}
   \adjustbox{scale=0.85}{
\begin{tikzcd}
	{TA\times TB\times T^2C} &[-10pt] {T(A\times TB)\times T^2C} &[-10pt] {T^2(A\times B)\times T^2C} \\
	& {T(A\times TB\times T^2C)} & {T(T(A\times B)\times T^2 C)} & {T^2(A\times B\times TC)} &[-10pt] {T(A\times B\times C).}
	\arrow["{t_1\times 1}", from=1-1, to=1-2]
	\arrow["{{t_1}_{A,TB\times T^2C}}"', from=1-1, to=2-2]
	\arrow["{Tt_2\times 1}", from=1-2, to=1-3]
	\arrow["{t_1}", from=1-2, to=2-2]
	\arrow["{t_1}", from=1-3, to=2-3]
	\arrow["{T(t_2\times 1)}"', from=2-2, to=2-3]
	\arrow[""{name=0, anchor=center, inner sep=0}, curve={height=-12pt}, from=2-3, to=2-4]
	\arrow[""{name=1, anchor=center, inner sep=0}, curve={height=12pt}, from=2-3, to=2-4]
	\arrow["{\mu\circ Tt_3\circ \mu}"', from=2-4, to=2-5]
	\arrow["{T\Gamma^{-1}}", between={0.2}{0.8}, Rightarrow, from=0, to=1]
\end{tikzcd}}
\end{center}
We can apply (\ref{axiomstrength1}) in \Cref{pseudocomm} to get
\begin{center}
\adjustbox{scale=0.85}{
\begin{tikzcd}
	{TA\times TB\times T^2C} & {T(A\times TB\times T^2C)} &[40pt] {T(A\times T(B\times TC))} &[10pt] {T(A\times B\times C).}
	\arrow["{t_1}", from=1-1, to=1-2]
	\arrow[""{name=0, anchor=center, inner sep=0}, curve={height=-12pt}, from=1-2, to=1-3]
	\arrow[""{name=1, anchor=center, inner sep=0}, curve={height=12pt}, from=1-2, to=1-3]
	\arrow["{\mu \circ Tt_3\circ\mu\circ Tt_2}", from=1-3, to=1-4]
	\arrow["{T(1\times \Gamma^{-1})}", between={0.2}{0.8}, Rightarrow, from=0, to=1]
\end{tikzcd}}
\end{center}
Since $t_1$ is a 2-natural transformation, we get that the last pasting equals the whiskering
\begin{center}
    \adjustbox{scale=0.85}{
\begin{tikzcd}
	{TA\times TB\times T^2C} &[20pt] {TA\times T(B\times TC)} & {T(A\times T(B\times TC))} &[15pt] {T(A\times B\times C).}
	\arrow[""{name=0, anchor=center, inner sep=0}, curve={height=-12pt}, from=1-1, to=1-2]
	\arrow[""{name=1, anchor=center, inner sep=0}, curve={height=12pt}, from=1-1, to=1-2]
	\arrow["{t_1}", from=1-2, to=1-3]
	\arrow["{\mu\circ Tt_3\circ\mu\circ Tt_2}", from=1-3, to=1-4]
	\arrow["{1\times \Gamma^{-1}}"{pos=0.1}, between={0.2}{0.8}, Rightarrow, from=0, to=1]
\end{tikzcd}}
\end{center}
This equals (\ref{gamma3-2}) since $\mu\circ Tt_3\circ \mu \circ Tt_2\circ t_1=\mu\circ Tt_2 \circ t_1\circ (1\times \mu)\circ (1\times Tt_2)$ as 1-cells from $TA\times T(B\times TC)$ to $T(A\times B\times C).$
\end{proof}
\begin{lem} Let $(T,\eta,\mu,t,\Gamma)$ be a symmetric, pseudo commutative, strong 2-monad, then $\omega$ is 2-natural in the following sense: 

\begin{enumerate}
    \item For $f_1\colon A_1\to B_1$ and $f_2\colon A_2\to B_2$ in $\K,$ $$\gamma(T(f_1\times f_2);\omega_{A_1,A_2})=\gamma(\omega_{B_1,B_2};Tf_1,Tf_2).$$
    That is, the following multicategorical diagram commutes:
\begin{center}
    \begin{tikzcd}
        \la TA_1, TA_2\ra\ar[d,"\la Tf_1{,} Tf_2\ra"']\ar[r,"\omega"]&T(A_1\times A_2)\ar[d,"T(f_1\times f_2)"]\\
        \la TB_1,TB_2\ra \ar[r,"\omega"']&T(B_1\times B_2).
    \end{tikzcd}
\end{center} 
    \item For 2-cells $\alpha_1\colon f_1\to g_1$ in $\K(A_1,B_1)$ and $\alpha_2\colon f_2\to g_2$ in $\K(A_2,B_2)$ 
    $$\gamma(T(\alpha_1\times \alpha_2);1_{\omega_{A_1,A_2}})=\gamma(1_{\omega_{B_1,B_2}};T\alpha_1,T\alpha_2).$$
    That is, the multicategorical pasting
    \begin{center}
    \begin{tikzcd}
        \la TA_1, TA_2\ra\ar[d,bend left=50,"\la Tg_1{,} Tg_2\ra"name=R1]\ar[r,"\omega"]\ar[d,bend right=50,"\la Tf_1{,} Tf_2\ra"{swap,name=D1}]\ar[r,"\omega"]&T(A_1\times A_2)\ar[d,"T(g_1\times g_2)"]\\[20pt]
        \la TB_1,TB_2\ra\ar[r,"\omega"']&T(B_1\times B_2).
        \arrow[Rightarrow,from=D1,to=R1,"\la T\alpha_1 {,} T\alpha_2\ra",shorten <=2mm,shorten >=2mm]
    \end{tikzcd}
\end{center} 
equals
\begin{center}
    \begin{tikzcd}
        \la TA_1, TA_2\ra\ar[r,"\omega"]\ar[d,"\la Tf_1{,} Tf_2\ra"']&T(A_1\times A_2)\ar[d,bend right=60,"T(f_1\times f_2)"{swap,name=D1}]\ar[d,bend left=60,"T(g_1\times g_2)"name=R1]\\[20pt]
        \la TB_1,TB_2\ra \ar[r,"\omega"']&T(B_1\times B_2).
        \arrow[Rightarrow,from=D1,to=R1,"T(\alpha_1\times \alpha_2 )",shorten <=2mm,shorten >=2mm]
    \end{tikzcd}
\end{center} 
    \end{enumerate}
\end{lem}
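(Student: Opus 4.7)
The plan is to verify multicategorical 2-naturality of $\omega$ componentwise, using \Cref{compositiontalg}. For each of (1) and (2), one first checks that the underlying 1-cells in $\K$ agree and then that the 2-cell constraints at each input slot agree. Throughout I will freely use that $Tf_1, Tf_2, T(f_1\times f_2)$ are \emph{strict} $T$-algebra maps, so their structure 2-cells $\overline{(-)}$ are identities; this collapses many of the pastings appearing in \Cref{compositiontalg}.

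For part (1), equality of the underlying 1-cells follows from the strict 2-naturality of $t_1$, $Tt_2$, and $\mu$ applied to $f_1$ and $f_2$: both composites equal $\mu\circ Tt_2\circ t_1$ with $T(f_1\times f_2)$ post-composed, equivalently with $Tf_1\times Tf_2$ pre-composed. The slot $s=1$ constraint is the identity 2-cell on both sides, because $\omega_1$ is an identity and the inner/outer maps are strict. Slot $s=2$ is the content of the statement. Here the strategy is to first invoke \Cref{omega2} to rewrite $\omega_2$ as the single-$\Gamma^{-1}$ whiskering
\[
\Gamma^{-1}\colon\; \omega'\;\Longrightarrow\;\omega\quad\text{on}\quad TA_1\times T^2A_2\to T(A_1\times A_2).
\]
After this rewriting, the $s=2$ constraint of $\gamma(T(f_1\times f_2);\omega_{A_1,A_2})$ becomes $\Gamma^{-1}_{A_1,A_2}$ whiskered on the right by $T(f_1\times f_2)$, while the $s=2$ constraint of $\gamma(\omega_{B_1,B_2};Tf_1,Tf_2)$ becomes $\Gamma^{-1}_{B_1,B_2}$ whiskered on the left by $Tf_1\times T^2f_2$ (modulo strict structural 1-cells). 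Equality of the two whiskerings is exactly the modification axiom \eqref{strongdiagram} (for $\Gamma^{-1}$ in place of $\Gamma$) applied to the pair $(f_1,f_2)$; strict 2-naturality of $t, Tt, \mu$ propagates the whiskerings through the remaining structure 1-cells.

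For part (2), with the underlying 1-cells already identified by (1), the claim reduces to the equality of two 2-cells in $\K$. The 2-cell $\gamma(T(\alpha_1\times\alpha_2);1_{\omega_{A_1,A_2}})$ is $T(\alpha_1\times\alpha_2)$ whiskered on the right of $\omega_{A_1,A_2}$, while $\gamma(1_{\omega_{B_1,B_2}};T\alpha_1,T\alpha_2)$ is $T\alpha_1\times T\alpha_2$ whiskered on the left of $\omega_{B_1,B_2}$. Since $\omega=\mu\circ Tt_2\circ t_1$ is assembled from strict 2-natural transformations and the strict 2-functor $T$, applying strict 2-naturality with respect to 2-cells successively to $t_1$, $Tt_2$, and $\mu$ with the 2-cells $\alpha_1,\alpha_2$ yields the required equality.

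The main obstacle will be the bookkeeping in slot $s=2$ of (1): the definitional form of $\omega_2$ is a large pasting involving two $\Gamma$'s, symmetries, and monad axiom squares, and it is only after the cleanup provided by \Cref{omega2} that a single instance of the modification axiom for $\Gamma^{-1}$ suffices. Once $\omega_2$ has been reduced to a one-$\Gamma^{-1}$ whiskering, the remaining work is merely to recognise the pastings appearing on the two sides of \eqref{strongdiagram} inside the $\gamma$-composition pastings of \Cref{compositiontalg}, and to check that the auxiliary structural 1-cells of $t$ and $\mu$ transport freely past the whiskering by strict 2-naturality.
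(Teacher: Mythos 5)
Your proposal is correct and follows essentially the same route as the paper: reduce $\omega_2$ to the single-$\Gamma^{-1}$ whiskering via \Cref{omega2}, identify the slot-$2$ constraints of the two $\gamma$-composites as the two whiskerings of $\Gamma^{-1}$ by $(f_1,f_2)$, and conclude by the modification axiom for $\Gamma^{-1}$ together with strict $2$-naturality of $t_1$, $Tt_2$, and $\mu$; part (2) is likewise handled by $2$-naturality alone. No gaps.
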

\begin{proof}
For part $(1),$ the corresponding 1-cells of $\gamma(T(f_1\times f_2);\omega)$ and $\gamma(\omega;Tf_1,Tf_2)$ are equal since $\omega\colon TA_1\times TA_2\to T(A_1\times A_2)$ equals $\mu\circ Tt_2\circ t_1,$ a composition of 2-natural transformations.  The 1-cells $\gamma(T(f_1\times f_2);\omega)_1$ and $\gamma(\omega;Tf_1,Tf_2)_1$ are equal since both are identity 1-cells, with $Tf_1$ and $Tf_2$ being strict maps of $T$-algebras.  Let's show that $\gamma(T(f_1\times f_2);\omega)_2=\gamma(\omega;Tf_1,Tf_2)_2.$ By a double application of \Cref{pseudocomm}, $\gamma(\omega;Tf_1,Tf_2)_2$ equals
\begin{center}
    \adjustbox{scale=0.9}{
\begin{tikzcd}
	{TA_1\times T^2A_2} & {TB_1\times T^2B_2} & {T(B_1\times TB_2)} & {T(B_1\times B_2).}
	\arrow["{Tf_1\times T^2f_2}", from=1-1, to=1-2]
	\arrow[""{name=0, anchor=center, inner sep=0}, curve={height=-12pt}, from=1-2, to=1-3]
	\arrow[""{name=1, anchor=center, inner sep=0}, curve={height=12pt}, from=1-2, to=1-3]
	\arrow["{\mu\circ Tt_2}", from=1-3, to=1-4]
	\arrow["{\Gamma^{-1}}", between={0.2}{0.8}, Rightarrow, from=0, to=1]
\end{tikzcd}}
\end{center}
Since $\Gamma^{-1}$ is a modification, and because $\mu$ and $Tt_2$ are 2-natural this whiskering can be writen as
\begin{center}
    \adjustbox{scale=0.9}{
\begin{tikzcd}
	{TA_1\times T^2A_2} && {T(A_1\times TA_2)} & {T(A_1\times A_2)} & {T(B_1\times B_2).}
	\arrow[""{name=0, anchor=center, inner sep=0}, curve={height=-12pt}, from=1-1, to=1-3]
	\arrow[""{name=1, anchor=center, inner sep=0}, curve={height=12pt}, from=1-1, to=1-3]
	\arrow["{\mu \circ Tt_2}", from=1-3, to=1-4]
	\arrow["{T(f_1\times f_2)}", from=1-4, to=1-5]
	\arrow["{\Gamma^{-1}}", between={0.2}{0.8}, Rightarrow, from=0, to=1]
\end{tikzcd}}
\end{center}
An application of \Cref{omega2} gives us that this is exactly $\gamma(T(f_1\times f_2),\omega)_2.$ Part $(2)$ follows from the 2-naturality of $t_1,Tt_2$ and $\mu$.
\end{proof}
\begin{defi}
    Let $(T,\eta,\mu,t,\Gamma)$ be a symmetric, pseudo commutative, strong 2-monad. For $A_1,\dots,A_n\in \ob (\K ),$ we define $$\omega_n=\omega_{A_1,\dots, A_n}\colon \la TA_1, \dots, TA_n\ra \to T(A_1\times \cdots\times A_n)$$
    in $\talg$ by recursion in the following way:
    \begin{itemize}
        \item For $n=0,$ $\omega_0\colon 1\to T1$ is $\eta_1\colon 1\to T1.$
        \item For $n=1$ $\omega_1\colon  TA_1\to TA_1$ is the identity $1_{TA_1}.$
        \item For $n=2,$ $\omega_2$ is $\omega_{A_1,A_2}\colon \la TA_1,TA_2\ra\to T(A_1\times TA_2)$ from \Cref{omegais1cell}.
        \item For $n\geq 3$ $\omega_n=\gamma(\omega_2;\omega_{n-1},\omega_1).$
    \end{itemize}
\end{defi}
\begin{cor}
Let $(T,\eta,\mu,t,\Gamma)$ be a symmetric, pseudo commutative, strong 2-monad, and $A_1,\dots , A_n$ objects of $\K.$ For $n\geq 3,$
$$\omega_n=\gamma(\omega_2;\omega_{n-1},\omega_1)=\gamma(\omega_2;\omega_1,\omega_{n-1}).$$
\end{cor}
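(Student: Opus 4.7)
The first equality holds by the very definition of $\omega_n$ for $n\geq 3$, so the content of the corollary is the second equality $\gamma(\omega_2;\omega_{n-1},\omega_1)=\gamma(\omega_2;\omega_1,\omega_{n-1})$. The plan is to proceed by induction on $n\geq 3$, with the base case $n=3$ reading $\gamma(\omega_2;\omega_2,\omega_1)=\gamma(\omega_2;\omega_1,\omega_2)$, which is precisely the associativity statement of \Cref{omegaass} applied at the appropriate factorizations.

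For the inductive step, I would assume $\omega_{n-1}=\gamma(\omega_2;\omega_1,\omega_{n-2})$ and compute
\begin{align*}
\omega_n &= \gamma(\omega_2;\omega_{n-1},\omega_1)\\
 &= \gamma\bigl(\omega_2;\gamma(\omega_2;\omega_1,\omega_{n-2}),\omega_1\bigr)\\
 &= \gamma\bigl(\gamma(\omega_2;\omega_2,\omega_1);\omega_1,\omega_{n-2},\omega_1\bigr)\\
 &= \gamma\bigl(\gamma(\omega_2;\omega_1,\omega_2);\omega_1,\omega_{n-2},\omega_1\bigr)\\
 &= \gamma\bigl(\omega_2;\omega_1,\gamma(\omega_2;\omega_{n-2},\omega_1)\bigr)\\
 &= \gamma(\omega_2;\omega_1,\omega_{n-1}).
\end{align*}
The first equality is by definition, the second by the inductive hypothesis, the third and fifth by the associativity of multicategorical composition in $\talg$, the fourth by applying \Cref{omegaass} to the copy of $\omega_3=\gamma(\omega_2;\omega_2,\omega_1)=\gamma(\omega_2;\omega_1,\omega_2)$ that has appeared, and the last by the recursive definition of $\omega_{n-1}$.

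The argument is entirely formal once \Cref{omegaass} is available: the real work has already been done in verifying the ternary associativity, and the induction simply propagates that equality up the tower by using multicategorical associativity to ``zoom in'' on an $\omega_3$-shaped subcomposition. For this reason I expect no genuine obstacle beyond bookkeeping; the only point that requires care is making sure the multicategorical associativity isomorphism is used with the correct grouping, namely splitting the ternary composite $\gamma(\omega_2;\omega_2,\omega_1)$ out of the outer composition in one direction and reabsorbing $\gamma(\omega_2;\omega_{n-2},\omega_1)$ back into $\omega_{n-1}$ on the other.
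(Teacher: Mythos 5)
Your proof is correct and is exactly the argument the paper intends: the first equality is the definition, and the second follows by induction from the ternary associativity of $\omega$ (\Cref{omegaass}) together with the associativity and unity axioms of the multicategory $\talg$, taking care that the copy of $\omega_2$ being regrouped is $\omega_{A_1\times\cdots\times A_{n-1},A_n}$ on one side and $\omega_{A_1,A_2\times\cdots\times A_n}$ on the other. The paper states the corollary without proof, and your induction supplies precisely the bookkeeping it leaves implicit.
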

It follows by a straightforward induction that $\omega_n$ is natural in the following sense. 
\begin{lem}\label{omeganatural}Let $(T,\eta,\mu,t,\Gamma)$ be a symmetric, pseudo commutative, strong 2-monad. For any $n,$ $\omega_n$ is natural in the following sense:
\begin{enumerate}
    \item[(1)]Suppose $f_i\colon A_i\to B_i$ are 1-cells in $\K$ for $1\leq i\leq n,$ then $$\gamma(T(f_1\times\cdots \times f_n);\omega_{A_1,\dots,A_n})=\gamma(\omega_{B_1,\dots ,B_n};Tf_1,\dots,Tf_n).$$ That is, the following multicategorical diagram commutes:
\begin{center}
    \begin{tikzcd}
        \la TA_1,\dots, TA_n\ra\ar[d,"\la Tf_1{,}\dots{,} Tf_n \ra"']\ar[r,"\omega"]&T(A_1\times\cdots\times A_n)\ar[d,"T(f_1\times\cdots \times f_n)"]\\
        \la TB_1,\dots,TB_n \ra\ar[r,"\omega"']&T(B_1\times \cdots\times B_n).
    \end{tikzcd}
\end{center} 
    \item[(2)] Suppose $\alpha_i\colon f_i\to g_i$ are 2-cells in $\K(A_i,B_i)$ for $1\leq i \leq n$ Then 
    $$\gamma(T(\alpha_1\times \cdots\times\alpha_n);1_{\omega_{A_1,\dots,A_n}})=\gamma(1_{\omega_{B_1,\dots,B_n}};T\alpha_1,\dots,T\alpha_n).$$
    That is, the multicategorical whiskering
    \begin{center}
\begin{tikzcd}
	{\la TA_1,TA_2\ra} & {T(A_1\times A_2)} \\[10pt]
	{\la TB_1,TB_2\ra} & {T(B_1\times B_2).}
	\arrow["\omega", from=1-1, to=1-2]
	\arrow[""{name=0, anchor=center, inner sep=0}, "{\la Tg_1,Tg_2\ra}", shift left=5, curve={height=-6pt}, from=1-1, to=2-1]
	\arrow[""{name=1, anchor=center, inner sep=0}, "{\la Tf_1,Tf_2\ra}"', shift right=5, curve={height=6pt}, from=1-1, to=2-1]
	\arrow["{T(g_1\times g_2)}", from=1-2, to=2-2]
	\arrow["\omega"', from=2-1, to=2-2]
	\arrow["{\la T\alpha_1,T\alpha_2\ra}", between={0.2}{0.8}, Rightarrow, from=1, to=0]
\end{tikzcd}
\end{center} 
equals the whiskering
\begin{center}
    \begin{tikzcd}
        \la TA_1,\dots, TA_n\ra\ar[r,"\omega"]\ar[d,"\la Tf_1{,} \dots{,}Tf_n\ra"']&T(A_1\times\cdots\times A_n)\ar[d,bend right=65,"T(f_1\times \cdots\times f_n)"{swap,name=D1}]\ar[d,bend left=65,"T(g_1\times\cdots\times g_n)"name=R1]\\[20pt]
        \la TB_1,\dots,TB_n\ra\ar[r,"\omega"']&T(B_1\times \cdots\times B_n).
        \arrow[Rightarrow,from=D1,to=R1,"T(\alpha_1\times \cdots\times \alpha_n )",shorten <=2mm,shorten >=2mm]
    \end{tikzcd}
\end{center} 
\end{enumerate}
\end{lem}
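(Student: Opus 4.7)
The plan is a straightforward induction on $n$, using the recursive definition $\omega_n = \gamma(\omega_2;\omega_{n-1},\omega_1)$ for $n\geq 3$ together with the previous lemma (the $n=2$ case), the 2-functoriality of $T$, and the interchange/associativity axioms of the $\Cat$-multicategory $\talg$ from \Cref{compositiontalg}.

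Base cases: for $n=0$, part (1) reads $\gamma(T(1_1);\omega_0) = \omega_0 = \gamma(\omega_0;)$, which holds because $\omega_0 = \eta_1$ and $T1 = 1$ is strict, and part (2) is vacuous. For $n=1$, $\omega_1 = 1_{TA_1}$ is an identity, so both parts reduce to the 2-functoriality of $T$. For $n=2$, both parts are exactly the content of the previous lemma.

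Inductive step for part (1), $n\geq 3$: using $\omega_n = \gamma(\omega_2;\omega_{n-1},\omega_1)$ and associativity of $\gamma$ in $\talg$, I would rewrite
\[
\gamma\bigl(\omega_{B_1,\dots,B_n};Tf_1,\dots,Tf_n\bigr)
= \gamma\Bigl(\omega_2^{B_1,B_2\times\cdots\times B_n};\,Tf_1,\;\gamma(\omega_{n-1}^{B_2,\dots,B_n};Tf_2,\dots,Tf_n)\Bigr).
\]
Applying the inductive hypothesis to $\omega_{n-1}$ replaces the inner composite by $\gamma(T(f_2\times\cdots\times f_n);\omega_{n-1}^{A_2,\dots,A_n})$, and then the $n=2$ case applied to the pair $(Tf_1,\,T(f_2\times\cdots\times f_n))$ pushes the product $T(f_1\times\cdots\times f_n)$ to the outside, yielding $\gamma(T(f_1\times\cdots\times f_n);\omega_n^{A_1,\dots,A_n})$. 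Each rewriting is legal because $Tf_i$ and $T(f_2\times\cdots\times f_n)$ are strict $T$-algebra maps (being in the image of $T\colon \K\to\talg$), so they assemble into 1-cells of $\talg$ for which the associativity and naturality of $\gamma$ apply without extra 2-cells. Part (2) is handled the same way, replacing the 1-cells $Tf_i$ by 2-cells $T\alpha_i$ and invoking multicategorical functoriality of $\gamma$ for 2-cells together with the $n=2$ case of part (2).

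I do not expect any genuine obstacle here: the only thing one must be careful about is that each step of the induction repackages a $\gamma$-composite using only (a) associativity of $\gamma$, (b) the inductive hypothesis, and (c) the base case $n=2$. The 2-cell constraints on both sides then coincide because, by construction of $\omega_n$, every constraint 2-cell involved is built from $\omega_2$-constraints, and these have already been shown natural in the $n=2$ lemma. Hence the induction closes and the lemma follows.
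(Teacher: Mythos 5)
Your proposal is correct and is essentially the paper's own argument: the paper simply asserts that the lemma ``follows by a straightforward induction'' from the $n=2$ case, and your induction on $n$ via the recursive definition $\omega_n=\gamma(\omega_2;\omega_{n-1},\omega_1)$, associativity of $\gamma$, and the 2-ary naturality lemma is exactly that induction written out. No gaps.
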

Next, we define the free algebra $\Cat$-multifunctor $T:\K\to \talg$.
\begin{defi}
    Let $(T,\eta,\mu,t,\Gamma )$ be a symmetric, pseudo commutative, strong 2-monad. We define the multifunctor $T\colon \K\to \talg$ as follows:
    \begin{itemize}
        \item $T$ is already defined on objects and since  $(TA,\mu:T^2A\to TA)$ is a $T$-algebra for $A\in\ob(\K).$
        \item For $n=0,$ $T\colon \K (1,A)\to \talg(1,TA)$ is defined as the composition
        \begin{center}
\begin{tikzcd}
	{\K (1,A)} & {\K (T1, TA)} & {\talg (1,TA),}
	\arrow["T", from=1-1, to=1-2]
	\arrow["T"', curve={height=12pt}, from=1-1, to=1-3]
	\arrow["{\eta_1*}", from=1-2, to=1-3]
\end{tikzcd}
            \end{center}
        where $\eta_1\colon 1\to T1,$ and $\talg (1,TA)=\K (1,TA).$
        \item For $n=1,$ we define $T\colon \K(A,B)\to \talg (TA,TB)$ as sending $f\colon A\to B$ to $TF\colon TA\to TB$ with $(Tf)_1$ being an identity. Similarly, a 2-cell $\alpha\colon f\to g$ in $\K (A,B)$ is sent to $T\alpha$.
        \item For $n\geq 2,$ we define $T\colon \K(A_1\times\cdots\times A_n,B)\to \talg(TA_1,\cdots,TA_n;TB)$ as the composition
        \begin{center}
\adjustbox{scale=0.9}{\begin{tikzcd}
	{\K(A_1\times \cdots\times A_n,B)} & {\talg (T(A_1\times\cdots\times A_n);TB) } & {\talg (TA_1,\ldots, TA_n;TB).}
	\arrow["T", from=1-1, to=1-2]
	\arrow["T"', curve={height=12pt}, from=1-1, to=1-3]
	\arrow["{\omega_n^*}", from=1-2, to=1-3]
\end{tikzcd}}
            \end{center}
    \end{itemize}
\end{defi}
\begin{thm}\label{maintheoremchapter1}
Let $(T,\eta,\mu,t,\Gamma)$ be a symmetric, pseudo commutative, strong 2-monad. Then $T\colon \K\to \talg$ is a non-symmetric $\Cat$-multifunctor.
\end{thm}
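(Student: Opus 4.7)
The axioms of a $\Cat$-multifunctor split into three parts: preservation of colored units, preservation of multicategorical composition, and functoriality on each hom $2$-category. Units and hom-2-functoriality are essentially free. The unit $1_A$ of $\K$ goes to $T(1_A)\circ\omega_1 = 1_{TA}$, since $T$ is a strict 2-functor on $\K$ and $\omega_1 = 1_{TA}$. Functoriality on 2-cells likewise holds because $T\colon\K\to\K$ strictly preserves vertical composition and identities of 2-cells, while multicategorical whiskering by $\omega_n$ is itself a strict 2-functor.

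The substantive content is preservation of the multicategorical composition $\gamma$. Given $f\colon B_1\times\cdots\times B_n\to C$ and $g_j\colon\overline{A_j}\to B_j$ in $\K$ with $\overline{A_j} = A_{j,1}\times\cdots\times A_{j,k_j}$ and $k = \sum_j k_j$, I must show
\[
T\bigl(f\circ(g_1\times\cdots\times g_n)\bigr)\;=\;\gamma\bigl(T(f);T(g_1),\ldots,T(g_n)\bigr)
\]
as 1-cells of $\talg$. At the underlying $\K$-1-cell level, the left-hand side is $T(f\circ(g_1\times\cdots\times g_n))\circ\omega_k$ while the right-hand side, by \Cref{compositiontalg}, is $Tf\circ\omega_n\circ\bigl((Tg_1\circ\omega_{k_1})\times\cdots\times(Tg_n\circ\omega_{k_n})\bigr)$. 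Applying \Cref{omeganatural}(1) to push $\omega_n$ past $Tg_1\times\cdots\times Tg_n$, and then using \Cref{omegaass} inductively to identify $\omega_n\circ(\omega_{k_1}\times\cdots\times\omega_{k_n})$ with $\omega_k$, the two underlying 1-cells agree.

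What remains is to match the structure 2-cells at each input. For $1\le s\le k$, writing $s = d + k_1+\cdots+k_{j-1}$ with $1\le d\le k_j$, the $s$-th structure 2-cell of the right-hand side is, by \Cref{compositiontalg} together with the strictness of all $Tg_{j,i}$ and $Tf$ as $T$-algebra maps, built from the $d$-th structure 2-cell of $\omega_{k_j}$ (whiskered by the other $Tg_i$'s) together with the $j$-th structure 2-cell of $\omega_n$ (whiskered by $Tf$); the $s$-th structure 2-cell of the left-hand side is the $s$-th structure 2-cell of $\omega_k$ whiskered by $T(f\circ\prod g_j)$. To show these pasting diagrams agree, I would proceed by induction on $n$: the base case $n=2$ is essentially the content of \Cref{omegaass}, and the inductive step reduces to the base case by the recursive definition $\omega_n = \gamma(\omega_2;\omega_{n-1},\omega_1)$ together with \Cref{omeganatural}(1). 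The main obstacle is not any new conceptual identity but the diagrammatic bookkeeping required, since each decomposition of $\omega_k$ into copies of $\omega_2$ triggers additional applications of the modification axiom for $\Gamma^{-1}$ and of the strength axioms from \Cref{pseudocomm}; conceptually, however, the entire verification is driven by the naturality and associativity lemmas already established for $\omega$.
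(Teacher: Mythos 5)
Your proposal is correct and follows essentially the same route as the paper, whose proof simply observes that unit preservation is immediate and that preservation of $\gamma$ follows from \Cref{omeganatural} together with \Cref{omegaass}. The only difference is one of packaging: since \Cref{omegaass} and \Cref{omeganatural} are already stated as equalities of 1-cells in the multicategory $\talg$ (hence already account for the structure 2-cells), your final paragraph's explicit re-matching of structure 2-cells can be replaced by the associativity axiom of $\talg$ itself, but this is a stylistic rather than a substantive divergence.
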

\begin{proof}
It is clear from the definition that $T$ preserves identities. Preservation of $\gamma$ by $T$ follows at once from \Cref{omeganatural} and \Cref{omegaass}.
\end{proof}
\section{Pseudo symmetry of the free algebra multifunctor}\label{Section3.3}
Next, we will define the pseudo symmetry isomorphisms. We do this in a recursive way, starting with the non trivial element of $\Sigma_2$. From here on $\sigma_i$ will denote the transposition in $\Sigma_n$ that permutes $i$ and $i+1.$
\begin{defi}
    Let $(T,\eta,\mu,t,\Gamma)$ be a symmetric, pseudo commutative, strong 2-monad. For $A,B\in \K$ we define $\omega'\colon \la TA,TB\ra\to T(A\times B)$ as the image of $\omega$ trhough the composition
    \begin{center}
        \begin{tikzcd}[font=\small]
            \talg(TB,TA;T(B\times A))\ar[r,"\sigma_1"]&[-17pt]\talg (TA,TB;T(B\times A))\ar[r,"T\cong_*"]&[-17pt]\talg (TA,TB;T(A\times B)).
        \end{tikzcd}
        \end{center}
\end{defi}
\begin{lem}\label{omega'}
    For $(T,\eta,\mu,t,\Gamma)$ a symmetric, pseudo commutative, strong 2-monad and $\omega'$ as in the previous definition, its component is 
        \begin{center}
            \begin{tikzcd}
                TA\times TB\ar[r,"t_2"]&T(TA\times TB)\ar[r,"Tt_1"]&T^2(A\times B)\ar[r,"\mu"]&T(A\times B).
            \end{tikzcd}
        \end{center}
        The 2-cell $\omega'_1$ equals
        \begin{center}
\begin{tikzcd}
	{T^2A\times TB} & {T(TA\times B)} & {T^2(A\times B)} & {T(A\times B),}
	\arrow[""{name=0, anchor=center, inner sep=0}, curve={height=-12pt}, from=1-1, to=1-2]
	\arrow[""{name=1, anchor=center, inner sep=0}, curve={height=12pt}, from=1-1, to=1-2]
	\arrow["{Tt_1}", from=1-2, to=1-3]
	\arrow["\mu", from=1-3, to=1-4]
	\arrow["\Gamma", between={0.2}{0.8}, Rightarrow, from=0, to=1]
\end{tikzcd}
        \end{center}
        while $\omega '_2$ is an identity 1-cell.
\end{lem}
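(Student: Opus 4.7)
The strategy is to unwind the definition of $\omega'$ as the multicategorical composite $\gamma\bigl(T(\cong);\,\omega_{B,A}\sigma_1\bigr)$ in $\talg$. The two inputs are the symmetric group action on $\omega_{B,A}\colon\la TB,TA\ra\to T(B\times A)$ and post-composition with the $T$-algebra map $T(\cong)\colon T(B\times A)\to T(A\times B)$. I will verify the three assertions (underlying 1-cell, $\omega'_2$, $\omega'_1$) in turn.

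For the underlying 1-cell, the $\sigma_1$-action precomposes $\omega_{B,A}=\mu\circ Tt_2\circ t_1$ with the swap $\tau\colon TA\times TB\to TB\times TA$, and post-composition appends $T(\cong)\colon T(B\times A)\to T(A\times B)$. Using the identity $t_1 = T(\cong)\circ t\circ \cong$ (with $t=t_2$) from the remark after \Cref{strong}, together with 2-naturality of $\mu$ and functoriality of $T$, the composite $T(\cong)\circ\omega_{B,A}\circ \tau$ simplifies directly to $\mu\circ Tt_1\circ t_2$, as claimed.

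For $\omega'_2$, by the definition of the $\sigma_1$-action, $(\omega_{B,A}\sigma_1)_2$ is the pasting built out of $h_{\sigma_1(2)}=(\omega_{B,A})_1$, which is the identity by construction of $\omega_{B,A}$; the remaining naturality squares for $\tau$ and $1\times \mu$ then collapse. Since $T(\cong)$ is a strict $T$-algebra map whose structure 2-cell is an identity (by naturality of $\mu$), the multicategorical composite with $T(\cong)$ specified in \Cref{compositiontalg} preserves this identity, yielding $\omega'_2=\mathrm{id}$.

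For $\omega'_1$, the $\sigma_1$-action gives that $(\omega_{B,A}\sigma_1)_1$ is a pasting involving $h_{\sigma_1(1)}=(\omega_{B,A})_2$. By \Cref{omega2}, $(\omega_{B,A})_2$ equals the whiskering of $\Gamma^{-1}_{B,TA}$ by the appropriate maps. Transporting this 2-cell across $T(\cong)$ (using naturality of $\mu$ to move $T(\cong)$ past $\mu$ and $T$-functoriality to move it past $Tt_2$) and invoking the symmetry axiom of \Cref{Tsymmetric}, equivalently \Cref{inverses}, allows one to identify the transported $\Gamma^{-1}_{B,TA}$ with $\Gamma_{TA,B}$ between $\mu\circ Tt_2\circ t_1$ and $\mu\circ Tt_1\circ t_2$ on $T^2A\times TB$. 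A final application of monad associativity $\mu\circ T\mu=\mu\circ\mu$ and naturality of $\mu$ rewrites the source and target 1-cells into the form $\mu\circ Tt_1\circ (-)$, producing the whiskering of $\Gamma$ with $Tt_1$ and $\mu$ displayed in the lemma. The main obstacle is the careful bookkeeping of the various swap isomorphisms and naturality squares involved in transferring $\Gamma^{-1}_{B,TA}$ into $\Gamma_{TA,B}$; once this identification is made, the rest of the computation consists of standard naturality and monad-axiom simplifications.
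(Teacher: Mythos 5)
Your proposal is correct and follows essentially the same route as the paper: the 1-cell component via $t_1=T(\cong)\circ t\circ\cong$ and naturality, $\omega'_2$ an identity because $\omega_1$ is, and $\omega'_1$ obtained by expressing $(\omega_{B,A})_2$ via \Cref{omega2} as a whiskering of $\Gamma^{-1}_{B,TA}$, transporting across the swaps and $T(\cong)$ by naturality, and invoking the symmetry axiom (\Cref{Tsymmetric}/\Cref{inverses}) to convert it into $\Gamma$. The only cosmetic difference is your extra appeal to $\mu\circ T\mu=\mu\circ\mu$, which the paper leaves implicit in matching the boundary 1-cells.
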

\begin{proof}
Since $\omega_1$ is an identity 2-cell we get that $\omega'_2$ is as well. On the other hand, by \Cref{omega2}, $\omega'_1$ can be written as
\begin{center}
\adjustbox{scale=0.9}{\begin{tikzcd}
	{T^2A\times TB} & {TB\times T^2A} &[15pt] {T(B\times TA)} &[10pt] {T(A\times B).}
	\arrow["\cong", from=1-1, to=1-2]
	\arrow[""{name=0, anchor=center, inner sep=0}, curve={height=-12pt}, from=1-2, to=1-3]
	\arrow[""{name=1, anchor=center, inner sep=0}, curve={height=12pt}, from=1-2, to=1-3]
	\arrow["{T\cong\circ \mu \circ Tt_2}", from=1-3, to=1-4]
	\arrow["{\Gamma^{-1}}", between={0.2}{0.8}, Rightarrow, from=0, to=1]
\end{tikzcd}}
\end{center}
By naturality of $\mu$ and definition of $t_1$ we can write this whiskering as
\begin{center}\adjustbox{scale=0.9}{\begin{tikzcd}
	{T^2A\times TB} & {TB\times T^2A} & {T(B\times TA)} & {T(TA\times B)} & {T(A\times B).}
	\arrow["\cong", from=1-1, to=1-2]
	\arrow[""{name=0, anchor=center, inner sep=0}, curve={height=-12pt}, from=1-2, to=1-3]
	\arrow[""{name=1, anchor=center, inner sep=0}, curve={height=12pt}, from=1-2, to=1-3]
	\arrow["{T\cong}", from=1-3, to=1-4]
	\arrow["{\mu\circ Tt_1}", from=1-4, to=1-5]
	\arrow["{\Gamma^{-1}}", between={0.2}{0.8}, Rightarrow, from=0, to=1]
\end{tikzcd}}
\end{center}
By \Cref{Tsymmetric} we have that $\omega'_1$ agrees with the whiskering in the statment of the lemma.
\end{proof}
\begin{lem} Let $(T,\eta,\mu,t,\Gamma)$ be a symmetric, pseudo commutative, strong 2-monad. For $A,B$ objects of $\K$, there is a 2-cell
\begin{center}
\begin{tikzcd}
	{\la TA,TB\ra} &[10pt] {T(A\times B),}
	\arrow[""{name=0, anchor=center, inner sep=0}, "\omega", curve={height=-12pt}, from=1-1, to=1-2]
	\arrow[""{name=1, anchor=center, inner sep=0}, "{\omega '}"', curve={height=12pt}, from=1-1, to=1-2]
	\arrow["{\Gamma_{A,B}}", between={0.2}{0.8}, Rightarrow, from=0, to=1]
\end{tikzcd}
    \end{center}
in the multicategory $\talg$ with component 2-cell $\Gamma_{A,B}.$
\end{lem}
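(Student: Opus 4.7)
The plan is to verify the two axioms of \Cref{twocelltalg}, namely the equality of the pasting diagrams \Cref{2cell1} and \Cref{2cell2} for $i=1$ and $i=2$, applied to $f=\omega$, $g=\omega'$ and $\alpha=\Gamma_{A,B}$. The codomain $B$ is the free $T$-algebra $T(A\times B)$, so the structure map is $\mu$, and the source $T$-algebras have structure maps $\mu\colon T^2A\to TA$ and $\mu\colon T^2B\to TB$.

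For $i=1$, the 2-cell $\omega_1$ is an identity, so \Cref{2cell1} reduces to the whiskering of $\Gamma_{A,B}$ along $\mu\times 1\colon T^2A\times TB\to TA\times TB$. By \Cref{omega'}, $\omega'_1$ is the whiskering of $\Gamma$ through $\mu\circ Tt_1$, so \Cref{2cell2} is the vertical composite of $\mu*T\Gamma_{A,B}*t_1$ with that $\Gamma$-whiskering. I would then recognize this vertical composite as exactly the pasting appearing on the right-hand side of axiom (\ref{axiommu2gamma}) in \Cref{pseudocomm} (diagram \Cref{pseudomu1}), which by that axiom equals the whiskering of $\Gamma$ with $\mu\times 1$. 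Hence the two pastings coincide.

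For $i=2$, the 2-cell $\omega'_2$ is an identity, so \Cref{2cell2} reduces to the whiskering $\mu*T\Gamma_{A,B}*t_2$. By \Cref{omega2}, $\omega_2$ is the whiskering of $\Gamma^{-1}$ through $\mu\circ Tt_2$, and \Cref{2cell1} is the vertical composite of this $\Gamma^{-1}$-whiskering with the whiskering of $\Gamma_{A,B}$ along $1\times\mu$. Writing $\omega_2$ on the left-hand side and transposing, the desired equality becomes the statement that the whiskering of $\Gamma$ with $1\times\mu$ equals the vertical composite of $\omega_2$ with the whiskering $\mu*T\Gamma*t_2$. This is precisely axiom (\ref{axiommu1gamma}) in \Cref{pseudocomm} (diagram \Cref{pseudomu2}), after using \Cref{omega2} to identify one of its pasting factors with $\omega_2$.

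The step I expect to be the most delicate is the bookkeeping in $i=2$: the axiom (\ref{axiommu1gamma}) expresses the whiskering $\Gamma*(1\times\mu)$ as a pasting in which both $\Gamma$ and $T\Gamma$ appear, and one must carefully match the $\Gamma$-factor in that pasting with the $\Gamma^{-1}$ appearing in $\omega_2$ (noting the source/target flip) to see that the net effect reproduces $\mu*T\Gamma*t_2$. Once the rewriting is set up correctly, however, both verifications are essentially immediate applications of the two $\mu$-axioms of the pseudocommutativity, so no further ingredients beyond \Cref{omega2}, \Cref{omega'}, and \Cref{pseudocomm} should be needed.
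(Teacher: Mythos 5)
Your proposal is correct and follows essentially the same route as the paper: for $i=1$ it identifies \Cref{2cell2} with the pasting \Cref{pseudomu1} via \Cref{omega'} and invokes axiom (\ref{axiommu2gamma}), and for $i=2$ it reduces the equality to axiom (\ref{axiommu1gamma}) via \Cref{omega2}, with the $\Gamma_{A,TB}$ in \Cref{pseudomu2} cancelling against the $\Gamma^{-1}_{A,TB}$ in $\omega_2$ (the "transposition" you describe is exactly the paper's cancellation step, noting that the factor of \Cref{pseudomu2} you identify is $\omega_2^{-1}$ rather than $\omega_2$). No gaps.
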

\begin{proof}
We need to prove that \Cref{2cell1,2cell2} for $\Gamma_{A,B}$ are equal for $i=1,2.$ For $i=1$ \Cref{2cell1} takes the form
\begin{center}
\adjustbox{scale=0.9}{
\begin{tikzcd}
	{T^2A\times TB} & {TA\times TB} &[10pt] {T(A\times B)}
	\arrow["{\mu\times 1}", from=1-1, to=1-2]
	\arrow[""{name=0, anchor=center, inner sep=0}, curve={height=-12pt}, from=1-2, to=1-3]
	\arrow[""{name=1, anchor=center, inner sep=0}, curve={height=12pt}, from=1-2, to=1-3]
	\arrow["\Gamma", between={0.2}{0.8}, Rightarrow, from=0, to=1]
\end{tikzcd}}
\end{center}
Now, by \Cref{omega'},  \Cref{2cell2} for $i=1$ agrees exactly with \Cref{pseudomu1} and we are done by (\ref{axiommu2gamma}) in \Cref{pseudocomm}. For $i=2$ \Cref{2cell1} is, by an application of (\ref{axiommu1gamma}) in \Cref{pseudocomm},  and \Cref{omega2},
\begin{center}
\adjustbox{scale=0.9}{
            \begin{tikzcd}
                TA\times T^2B\ar[d,"t_2"']\ar[rrd,bend left=20, "\omega'_{A,B}"name=D1]\ar[rrd,"\omega_{A,B}"{swap,name=R1}]&&\\
                |[alias=R3]|T(TA\times TB)\ar[d,"Tt_2"']\ar[r,"Tt_1"']&|[alias=D2]|T^2(A\times TB)\ar[d,"T^2 t_2"]\ar[r,"\mu"']&T(A\times TB)\ar[d,"T t_2"]\\
                T^2(TA\times B)\ar[d,"T^2t_1"']&T^3(A\times B)\ar[d,"T\mu"]\ar[r,"\mu"]&T^2(A\times B)\ar[d,"\mu"]\\
                |[alias=R2]|T^3(A\times B)\ar[r,"T\mu"']&T^2(A\times B)\ar[r,"\mu"']&T(A\times B).
                \arrow[Rightarrow,from=D1,to=R1,"\Gamma_{A,TB}^{-1}"',shorten <=3mm,shorten >=3mm]
                \arrow[Rightarrow,from=D2,to=R2,"T\Gamma_{A,B}",shorten=12mm]
                \arrow[Rightarrow,from=R1,to=R3,"\Gamma_{A,TB}"',shorten <=2mm,shorten >=2mm]
            \end{tikzcd}}
        \end{center}
The 2-cells $\Gamma_{A,TB}$ and its inverse cancel out to give \Cref{2cell2} for $i=2.$
\end{proof}
Next, we define $T_{\sigma_i}$ for $\sigma_i\in \Sigma_2.$ From now on we will use the notation in the \Cref{appendix}, specially in \Cref{pseudosymm}. We startby defining $T_{\sigma_1}$ for $\sigma_1\in \Sigma_2$. 
\begin{defi}
 Let $A,B$ and $C$ be objects of $\K.$ We define the natural transformation
\begin{center}
    \begin{tikzcd}
        \K(A\times B,C)\ar[r,"T"]\ar[d,"\sigma_1"']&|[alias=R1]|\talg (TA,TB;TC)\ar[d,"\sigma_1"]\\
        |[alias=D1]|\K(B\times A,C)\ar[r,"T"{swap}]&\talg (TB,TA;C),
        \arrow[Rightarrow,from=D1,to=R1,"T_{\sigma_1}",shorten <=5mm,shorten >=4mm]
    \end{tikzcd}
\end{center}
as having component $T_{\sigma_1;f}$ for $f\colon A\times B\to C,$ the whiskering in the multicategory $\talg$
\begin{center}
\begin{tikzcd}
	{\la TB,TA\ra} &[15pt] {T(B\times A)} & {T(A\times B)} & TC
	\arrow[""{name=0, anchor=center, inner sep=0}, "\omega", curve={height=-12pt}, from=1-1, to=1-2]
	\arrow[""{name=1, anchor=center, inner sep=0}, "{\omega '}"', curve={height=12pt}, from=1-1, to=1-2]
	\arrow["{T\cong}", from=1-2, to=1-3]
	\arrow["Tf", from=1-3, to=1-4]
	\arrow["{\Gamma_{B,A}}", between={0.2}{0.8}, Rightarrow, from=0, to=1]
\end{tikzcd}
\end{center}
The fact that $T_{\sigma_1}$ is in fact a natural transformation follows from the exchange property in the 2-category $\K$.
\end{defi}
\begin{defi}\label{pseudosymmisos}
Let $(T,\eta,\mu,t,\Gamma)$ be a symmetric, pseudo commutative, strong 2-monad, and $\sigma_i\in\Sigma_n.$ the transposition that interchanges $i$ and $i+1$ in $\Sigma_n$ for $n\geq 3.$ We define the natural transformation $T_{\sigma_i}$
\begin{center}
    \begin{tikzcd}[font=\small]
        \K(A_1\times\cdots\times A_n,C)\ar[r,"T"]\ar[d,"\sigma_i"']&[-10pt]|[alias=R1]|\talg (TA_1,\dots, TA_n;TC)\ar[d,"\sigma_i"]\\
        |[alias=D1]|\K(A_1\times \cdots\times A_{i+1}\times A_i\times \cdots\times A_n,C)\ar[r,"T"{swap}]&\talg (TA_1,\dots,TA_{i+1},TA_{i},\dots,TA_n;C)
        \arrow[Rightarrow,from=D1,to=R1,"T_{\sigma_i}",shorten <=8mm,shorten >=9mm]
    \end{tikzcd}
\end{center}
as follows. For $f\colon A_1\times \cdots\times A_n\to C$ the 2-cell $T_{\sigma_i ;f}$ is 
\begin{center}
\adjustbox{scale=0.9}{
    \begin{tikzcd}
    TA_1\times\cdots\times TA_{i+1}\times TA_i\times \cdots\times TA_n\ar[r,bend left=20,"\omega \times\omega\times\omega"name=D1]\ar[r,bend right=15,"\omega\times\omega'\times\omega"{swap,name=R1}]&[20pt]T(A_1\times \cdots\times A_{i-2})\times T(A_{i+1}\times A_i)\times T(A_{i+2}\times\cdots\times A_n)\ar[d,"\omega"]\\[-12pt]
    & T(A_1\times \cdots\times A_{i+1}\times A_i\times \cdots \times  A_n)\ar[d,"T\cong"]\\[-12pt]
    &T(A_1\times \cdots\times A_i\times A_{i+1}\times \cdots \times A_n)\ar[d,"Tf"]\\[-12pt]
    &TC
    \arrow[Rightarrow,near start,from=D1,to=R1,"1\times\Gamma_{A_{i+1},A_i}\times 1"',shorten <=5mm,shorten >=5mm]
    \end{tikzcd}}
\end{center}
The fact that this is well defined comes from the associativity of  $\omega$ (\Cref{omegaass}), and the fact that $T_{\sigma_i}$ is in fact a natural transformation follows from the exchange rule in $\K$.
\end{defi}
Next, we prove that this defines $T_{\sigma}$ for every $\sigma\in \Sigma_n$ and every $n$ by using that the symmetric group $\Sigma_n$ is generated by the consecutive transpositions $\sigma_1,\dots,\sigma_{n-1}$ subject to the relations:
\begin{itemize}
    \item[(a)] $\sigma_i^2=\id$
    \item[(b)] $\sigma_i\sigma_j=\sigma_j\sigma_i$ for $|i-j|>1.$
    \item[(c)] $\sigma_i\sigma_{i+1}\sigma_i=\sigma_{i+1}\sigma_i\sigma_{i+1}.$
\end{itemize}
The relations between the different $T_{\sigma_i}$ will follow from relations between 2-cells in $\talg$ which can be proven in $\K.$ The relations in $\K$ can be proven even when $T$ is not symmetric, except for the relation induced by $\sigma_i\sigma_i=\id.$ The following follows (in a way, it is equivalent to) symmetry for $T.$
\begin{lem}
Suppose that $(T,\eta,\mu,t,\Gamma )$ is a symmetric, pseudo commutative, strong 2-monad. Then the following pasting diagram is the identity:
\begin{center}
\adjustbox{scale=0.9}{
    \begin{tikzcd}
        \K(A_1\times\cdots\times \times A_n,C)\ar[r,"T"]\ar[d,"\sigma_i"']&[-15pt]|[alias=R1]|\talg (TA_1,\dots, TA_n;TC)\ar[d,"\sigma_i"]\\
        |[alias=D1]|\K(A_1\times \cdots\times A_{i+1}\times A_i\times \cdots\times A_n,C)\ar[r,"T"{swap}]\ar[d,"\sigma_{i}"']&|[alias=R2]|\talg (TA_1,\dots,TA_{i+1},TA_{i},\dots,TA_n;C)\ar[d,"\sigma_{i}"]\\
         |[alias=D2]|\K(A_1\times\cdots\times A_n,C)\ar[r,"T"']&\talg (TA_1,\dots, TA_n;TC).
        \arrow[Rightarrow,from=D1,to=R1,"T_{\sigma_i}",shorten <=11mm,shorten >=12mm]
        \arrow[Rightarrow,from=D2,to=R2,"T_{\sigma_i}"',shorten <=11mm,shorten >=12mm]
    \end{tikzcd}}
\end{center}
\end{lem}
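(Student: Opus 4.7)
The plan is to evaluate the asserted pasting at an arbitrary $f\in \K(A_1\times\cdots\times A_n,C)$ and show that the resulting endo-2-cell on $Tf$ collapses, via \Cref{inverses}, to an identity. Writing $\tau$ for the swap of the factors at positions $i$ and $i+1$, the lower-left path first sends $f$ to $g:=f\circ\tau$ and then to $g\circ\tau=f$, while the upper-right path applies $\sigma_i$ twice to $Tf$. Because $\sigma_i^2=\id$ in $\Sigma_n$ and $\tau^2=1_\K$, both outer boundaries evaluate to the single multilinear map $Tf\in\talg(TA_1,\dots,TA_n;TC)$, so the pasting produces a 2-cell $Tf\Rightarrow Tf$ that I must show is the identity.

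First I would unpack the two component 2-cells using \Cref{pseudosymmisos}. The bottom square applied to $g$ has underlying component in $\K$ equal to the whiskering of $1\times\Gamma_{A_i,A_{i+1}}\times 1$ by the prescribed configuration of $\omega$'s, by the strict swap isomorphism $T\tau$, and by $Tg$. Symmetrically, the top square applied to $f$ has underlying component the whiskering of $1\times\Gamma_{A_{i+1},A_i}\times 1$ by the mirror configuration of $\omega$'s, by $T\tau$, and by $Tf$. The action of $\sigma_i$ on the top square's 2-cell, coming from the symmetric-multicategory structure on the right column, amounts merely to precomposing the whole source and target with $\tau$ on the $i$-th and $(i{+}1)$-st inputs, so the $\sigma_i$-image of this whiskering is again built from $1\times\Gamma_{A_{i+1},A_i}\times 1$.

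Next I would stack these two 2-cells vertically in $\K$ and spot two cancellations. The two copies of $T\tau$ collapse to the identity by the 2-functoriality of $T$ combined with $\tau^2=1$, and, after that collapse, the remaining 2-cell portion is a whiskering of the vertical composite
\[
\Gamma_{A_i,A_{i+1}}\cdot\Gamma_{A_{i+1},A_i}
\]
by a common string of $\omega$'s and $Tf$. By \Cref{inverses}, this vertical composite is an identity 2-cell, so the full pasting is the identity on $Tf$. Since this holds for every $f$, the asserted pasting is the identity 2-cell of $T\circ\sigma_i^2=T$.

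The main obstacle is the bookkeeping: one must identify the 1-cells in the two whiskerings carefully, matching up the $\omega$'s, the $t_1$, $t_2$, and $\mu$ components appearing inside $\omega_n=\gamma(\omega_2;\omega_{n-1},\omega_1)$, and the swap isomorphisms $T\tau$, so that the two $\Gamma$'s genuinely present as a vertically composable pair with a common source and target. Once this matching is achieved the rest is automatic, and no further strength or monad axiom is invoked, which is precisely why this lemma is (as the author remarks) equivalent to the symmetry axiom of \Cref{Tsymmetric}.
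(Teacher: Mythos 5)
The paper states this lemma without proof, remarking only that it ``follows (in a way, is equivalent to) symmetry for $T$''; your argument supplies exactly the intended details. Unpacking the two components of $T_{\sigma_i}$ via \Cref{pseudosymmisos}, cancelling the two swaps $T\tau$, and reducing the residual 2-cell to the whiskered vertical composite $\Gamma_{2,1}\circ\Gamma_{1,2}$, which is the identity by \Cref{inverses} (i.e.\ by \Cref{Tsymmetric}), is the correct and essentially unique route, so your proposal is right and matches the paper's intent.
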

The following holds in the absence of symmetry.
\begin{lem}\label{yangbaxter2}
Suppose that $(T,\eta,\mu,t,\Gamma)$ is a pseudo commutative, strong 2-monad. Then the pasting diagram
\begin{center}
\adjustbox{scale=0.9}{
    \begin{tikzcd}
    TA_1\times TA_2\times TA_3\times TA_4\ar[d,"1\times \cong"']\ar[r,"1\times \omega"]&|[alias=D1]|TA_1\times TA_2\times T(A_3\times A_4)\ar[r,"\omega"]&T(A_1\times A_2\times A_3\times A_4)\\
    |[alias=R1]|TA_1\times TA_2\times TA_4\times T A_3\ar[dd,"\cong\times 1"']\ar[rd,"\omega\times 1"]\ar[r,"1\times \omega"]&TA_1\times TA_2\times T(A_4\times A_3)\ar[r,"\omega"]\ar[u,"1 \times T\cong"']&T(A_1\times A_2\times A_4\times A_3)\ar[u,"T\cong"']\\[-10pt]
    &|[alias=D2]|T(A_1\times A_2)\times TA_4\times TA_3\ar[ru,"\omega"]&\\
    |[alias=R2]|TA_2\times TA_1\times TA_4\times TA_3\ar[r,"\omega\times 1"']&T(A_2\times A_1)\times TA_4\times TA_3\ar[r,"\omega"]\ar[u,"T\cong\times 1"]&T(A_2\times A_1\times A_4\times A_3),\ar[uu,"T\cong"']
    \arrow[Rightarrow,from=D1,to=R1,"1\times\Gamma",shorten <=9mm,shorten >=12mm]
        \arrow[Rightarrow,from=D2,to=R2," \Gamma\times 1",shorten <=9mm,shorten >=12mm]
    \end{tikzcd}}
\end{center}
equals the pasting
\begin{center}
\adjustbox{scale=0.9}{
    \begin{tikzcd}
    TA_1\times TA_2\times TA_3\times TA_4\ar[r,"\omega\times 1"]\ar[d,"\cong\times 1"']&|[alias=D1]|T(A_1\times TA_2)\times TA_3\times T A_4\ar[r,"\omega"]&T(A_1\times A_2\times A_3\times A_4)\\
    |[alias=R1]|TA_2\times TA_1\times TA_3\times TA_4\ar[rd,"1\times \omega"']\ar[r,"\omega\times 1"']\ar[dd,"1\times \cong"']&T(A_2\times A_1)\times TA_3\times TA_4\ar[r,"\omega"]\ar[u,"T\cong \times 1"']&T(A_2\times A_1\times A_3\times A_4)\ar[u,"T\cong"']\\[-10pt]
    &|[alias=D2]|TA_2\times TA_1\times T(A_3\times A_4)\ar[ur,"\omega"']&\\
    |[alias=R2]|TA_2\times TA_1\times TA_4\times TA_3\ar[r,"1\times \omega"']&TA_2\times TA_1\times T(A_4\times A_3)\ar[r,"\omega"']\ar[u,"1\times T\cong"]&T(A_2\times A_1\times A_4\times A_3).\ar[uu,"T\cong"']
    \arrow[Rightarrow,from=D1,to=R1,"\Gamma\times 1",shorten <=9mm,shorten >=12mm]
        \arrow[Rightarrow,from=D2,to=R2,"1\times \Gamma",shorten <=9mm,shorten >=12mm]
    \end{tikzcd}}
\end{center}
\end{lem}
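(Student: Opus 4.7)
The plan is to reduce both pasting diagrams to a common canonical form in which the 2-cell content becomes a horizontal composite, in $\K$, of two $\Gamma$-whiskerings that act on disjoint tensor factors; the middle-four interchange law in the Cartesian 2-category $\K$ will then yield the desired equality.

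First, I would verify that the two 1-cells $TA_1\times TA_2\times TA_3\times TA_4 \to T(A_2\times A_1\times A_4\times A_3)$ along the outer boundaries of each pasting agree. This is a 1-cell diagram chase using the associativity of the strength $t$, the monad axioms for $\mu$, and the 2-naturality of the structure maps involved in $\omega$; it does not require pseudo-commutativity.

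Next, using that $\Gamma$ is a modification, together with the naturality of $\omega$ from \Cref{omeganatural} and the associativity of $\omega$ from \Cref{omegaass}, I would transport the two interior $\Gamma$-whiskerings in each pasting past the surrounding $\omega$'s and symmetry isomorphisms. In both pastings, the 2-cell content reduces to the horizontal composite of a 2-cell concentrated on the $TA_1\times TA_2$ factor---given by a whiskering of $\Gamma_{A_1,A_2}$ by identities on the remaining tensor factors---with a 2-cell concentrated on the $TA_3\times TA_4$ factor---given by a whiskering of $\Gamma_{A_3,A_4}$ by identities---all followed by a common postcomposition built out of $\omega$ and the evident symmetry coherence isomorphism. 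The two pastings differ only in the order in which these two $\Gamma$-whiskerings are composed.

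Finally, since the two $\Gamma$-whiskerings act on disjoint tensor factors of $TA_1\times TA_2\times TA_3\times TA_4$, the middle-four interchange (exchange) law in $\K$ gives that they commute, yielding the equality of the two pasting diagrams. The main obstacle is the bookkeeping in the reduction step: several applications of strength associativity, naturality of $\omega$, and the modification axiom for $\Gamma$ are needed to bring both pastings into the canonical form where the interchange argument is immediate. Notice that the argument never uses the symmetry axiom of \Cref{Tsymmetric}, in accordance with the preceding remark that this lemma holds in the absence of symmetry.
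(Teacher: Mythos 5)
Your proposal is correct and matches the paper's proof: the paper likewise shows both pastings equal a common diagram in which the 2-cell is $\Gamma_{A_1,A_2}\times \Gamma_{A_3,A_4}$ whiskered by $\omega$, the point being that the two $\Gamma$'s act on disjoint factors so the interchange law makes the two orderings agree. Your extra bookkeeping via the modification axiom and the naturality/associativity of $\omega$ is exactly the implicit content of the paper's one-line reduction.
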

\begin{proof}
    Both pastings are equal to the pasting
    \begin{center}
    \adjustbox{scale=0.9}{
        \begin{tikzcd}
            TA_1\times TA_2\times TA_3\times TA_4\ar[d,"\cong\times \cong"']\ar[r,"\omega\times \omega"]&|[alias=D1]|T(A_1\times A_2)\times T(A_3\times A_4)\ar[r,"\omega"]&T(A_1\times A_2\times A_3\times A_4)\\
            |[alias=R1]|TA_2\times TA_1\times TA_4\times TA_3\ar[r,"\omega\times\omega"']&T(A_2\times A_1)\times T(A_4\times A_3)\ar[r,"\omega"']\ar[u,"T\cong\times T\cong"']&T(A_1\times A_2\times A_3\times A_4).\ar[u,"T\cong"]
            \arrow[Rightarrow,from=D1,to=R1,"\Gamma_{A_1,A_2}\times \Gamma_{A_3,A_4}"',shorten <=9mm,shorten >=12mm]
        \end{tikzcd}}
    \end{center}   
\end{proof}
When $T$ is symmetric, a slight generalization of the previous lemma can be interpreted as follows. To save space we will write $\overline{TA}=TA_1\times \cdots\times TA_n$ and $\overline{TA}\sigma=TA_{\sigma(1)}\times \cdots\times TA_{\sigma(n)}$ when $\sigma\in \Sigma_n$ and $A_1,\dots,A_n$ are objects of $\K .$
\begin{lem}
Suppose that $(T,\eta,\mu,t,\Gamma)$ is a symmetric, pseudo commutative, strong 2-monad, $n\geq 3$ and $1\leq i<i+2\leq j\leq n-1.$ Let $A_1,\dots,A_n,C$ be objects of $\K.$  Then, the pasting
    \begin{center}
        \begin{tikzcd}
            \K (\overline{TA},C)\ar[r,"T"]\ar[d,"\sigma_i"']&|[alias=D1]|\talg (\la TA\ra;C)\ar[d,"\sigma_i"]\\
            |[alias=R1]|\K (\overline{TA}\sigma_i,C)\ar[d,"\sigma_j"']\ar[r,"T"]&|[alias=D2]|\talg (\la TA\ra\sigma_i;C)\ar[d,"\sigma_j"]\\
            |[alias=R2]|\K (\overline{TA}\sigma_i\sigma_j,C)\ar[r,"T"']&\talg (\la TA\ra\sigma_i\sigma_j;C),

            \arrow[Rightarrow,from=D1,to=R1,"T\sigma_i"',shorten <=5mm,shorten >=5mm]
            \arrow[Rightarrow,from=D2,to=R2,"T\sigma_j"',shorten <=5mm,shorten >=5mm]
        \end{tikzcd} 
    \end{center}
    equals the pasting
    \begin{center}
        \begin{tikzcd}
            \K (\overline{TA},C)\ar[r,"T"]\ar[d,"\sigma_j"']&|[alias=D1]|\talg (\la TA\ra;C)\ar[d,"\sigma_j"]\\
            |[alias=R1]|\K (\overline{TA}\sigma_j,C)\ar[d,"\sigma_i"']\ar[r,"T"]&|[alias=D2]|\talg (\la TA\ra\sigma_j;C)\ar[d,"\sigma_i"]\\
            |[alias=R2]|\K (\overline{TA}\sigma_j\sigma_i,C)\ar[r,"T"']&\talg (\la TA\ra\sigma_j\sigma_i;C).

            \arrow[Rightarrow,from=D1,to=R1,"T\sigma_j"',shorten <=5mm,shorten >=5mm]
            \arrow[Rightarrow,from=D2,to=R2,"T\sigma_i"',shorten <=5mm,shorten >=5mm]
        \end{tikzcd} 
    \end{center}
\end{lem}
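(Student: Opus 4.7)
The plan is to evaluate both pastings at an arbitrary $f\colon A_1\times\cdots\times A_n\to C$ and to identify the resulting 2-cell components, exploiting that the slots affected by $\sigma_i$ and $\sigma_j$ are disjoint since $j\ge i+2$. By \Cref{pseudosymmisos}, $T_{\sigma_i;f}$ is a whiskering of $Tf\circ T\cong$ with an $\omega$-pasting whose only nontrivial 2-cell is $1\times\Gamma_{A_{i+1},A_i}\times 1$ at positions $(i,i+1)$; analogously, $T_{\sigma_j;g}$ involves $1\times\Gamma_{A_{j+1},A_j}\times 1$ at positions $(j,j+1)$ for the relevant $g$. The first pasting evaluated at $f$ is the vertical composite of $T_{\sigma_i;f}$ with $T_{\sigma_j;f\sigma_i}$, and the second is the vertical composite of $T_{\sigma_j;f}$ with $T_{\sigma_i;f\sigma_j}$. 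Since $\sigma_i\sigma_j=\sigma_j\sigma_i$ in $\Sigma_n$, the source and target 1-cells of both pastings already agree; only the 2-cell content needs matching.

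The main step is to regroup $\omega_n$ via the associativity \Cref{omegaass} so that the pair $(TA_i,TA_{i+1})$ is combined by an inner $\omega_2$ and, independently, the pair $(TA_j,TA_{j+1})$ is combined by a separate inner $\omega_2$. This regrouping is legitimate precisely because $i+2\le j$, so the two pairs do not overlap. With $\omega_n$ presented in this nested form, both vertical composites reduce to whiskerings of an iterated $\omega$-diagram with two 2-cells of the form $1\times\Gamma_{A_{i+1},A_i}\times 1$ and $1\times\Gamma_{A_{j+1},A_j}\times 1$ acting on disjoint Cartesian factors. \Cref{yangbaxter2}, padded by identities on the factors outside positions $i,i+1,j,j+1$, asserts exactly that the two orders of composing such disjoint $\Gamma$-whiskerings agree, with common value the single pasting in which both $\Gamma$'s are applied simultaneously via $\Gamma_{A_{i+1},A_i}\times\Gamma_{A_{j+1},A_j}$ on the corresponding product of $\omega_2$'s. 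Whiskering with $Tf$ and the coherence isomorphism $T\cong$, and using the naturality of $\omega$ from \Cref{omeganatural}, then identifies both pastings as the same 2-cell in $\talg$.

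The main obstacle is the bookkeeping in the regrouping step: the two vertical composites must each be brought, via the associativity coherence isomorphisms of \Cref{omegaass} together with the naturality squares of $\omega$ (\Cref{omeganatural}), into a common shape in which the two $\Gamma$'s sit at non-overlapping slots of an iterated $\omega$. Once that common shape is reached, the commutation of the two disjoint $\Gamma$'s is essentially the exchange rule in $\K$, packaged as the $n=4$ case \Cref{yangbaxter2}, and this closes the proof.
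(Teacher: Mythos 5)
Your proposal is correct and follows essentially the same route the paper intends: the paper states this lemma without proof immediately after \Cref{yangbaxter2}, presenting it as "a slight generalization of the previous lemma," i.e.\ precisely the reduction you carry out by regrouping $\omega_n$ via \Cref{omegaass} so that the two disjoint $\Gamma$'s sit in separate $\omega_2$ slots and then invoking the padded $n=4$ case, whose common value is the simultaneous pasting $\Gamma\times\Gamma$. Your write-up supplies the bookkeeping (evaluation at $f$, naturality from \Cref{omeganatural}, whiskering by $Tf\circ T\cong$) that the paper leaves implicit.
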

Next we focus on the Yang-Banxter equation. First we prove the following lemma that we will also need later. We don't require symmetry.
\begin{lem}\label{newlemma}
Let $(T,\eta,\mu,t,\Gamma)$ be a pseudo commutative, strong 2-monad, and $A_1,A_2,A_3$ objects of $\K$. Then, 
\begin{enumerate}
    \item\label{newlem10} The pasting diagram
\begin{center}
\begin{equation}\label{newlem1}
    \adjustbox{scale=0.9}{\begin{tikzcd}
    TA_1\times TA_2\times TA_3\ar[d,"\cong\times 1"']\ar[r," \omega\times 1"]&|[alias=D1]|T(A_1\times A_2)\times TA_3\ar[r,"\omega"]&T(A_1\times A_2\times A_3)\\
    |[alias=R1]|TA_2\times TA_1\times TA_3\ar[rd,"1\times \omega"']\ar[r,"\omega\times 1"']\ar[dd,"1\times \cong"']&T(A_2\times A_1)\times TA_3\ar[r,"\omega"']\ar[u,"T(\cong\times 1)"]&T(A_2\times A_1\times A_3)\ar[u,"T(\cong\times 1)"']\\[-10pt]
    &|[alias=D2]|TA_2\times T(A_1\times A_3)\ar[ru,"\omega"']\\
    |[alias=R2]|TA_2\times TA_3\times TA_1\ar[r,"1\times \omega"']&TA_2\times T(A_3\times A_1)\ar[r,"\omega"']\ar[u,"1\times T\cong"']&T(A_2\times A_3\times A_1)\ar[uu,"T(1\times \cong )"']
    \arrow[Rightarrow,from=D1,to=R1,"\Gamma\times 1"',shorten <=6mm,shorten >=7mm]
    \arrow[Rightarrow,from=D2,to=R2,"1\times \Gamma"',shorten <=5mm,shorten >=7mm]
    \end{tikzcd}}
\end{equation}
\end{center}
equals the whiskering
\begin{center}
\begin{tikzcd}
	{TA_1\times TA_2\times TA_3} & {TA_1\times T(A_2\times A_3)} & {T(A_1\times A_2\times A_3).}
	\arrow["{1\times \omega}", from=1-1, to=1-2]
	\arrow[""{name=0, anchor=center, inner sep=0}, "{\omega }", curve={height=-12pt}, from=1-2, to=1-3]
	\arrow[""{name=1, anchor=center, inner sep=0}, "{\omega '}"', curve={height=12pt}, from=1-2, to=1-3]
	\arrow["\Gamma", between={0.2}{0.8}, Rightarrow, from=0, to=1]
\end{tikzcd}
\end{center}
\item\label{newlem9} The pasting diagram
\begin{center}
\begin{equation}\label{newlem2}
    \adjustbox{scale=0.9}{\begin{tikzcd}
    TA_1\times TA_2\times TA_3\ar[r,"1\times \omega"]\ar[d,"1\times \cong"']&|[alias=D1]|TA_1\times T(A_2\times A_3)\ar[r,"\omega"]&T(A_1\times A_2\times A_3)\\
    |[alias=R1]|TA_1\times TA_3\times TA_2\ar[rd,"\omega\times 1"']\ar[dd,"\cong\times 1"']\ar[r,"1\times \omega"']&TA_1\times T(A_3\times A_2)\ar[r,"\omega"']\ar[u,"1\times T\cong"']&T(A_1\times A_3\times A_2)\ar[u,"T(1\times \cong )"']\\[-10pt]
    &|[alias=D2]|T(A_1\times A_3)\times TA_2\ar[ru,"\omega"']\\
    |[alias=R2]|TA_3\times TA_1\times TA_2\ar[r,"\omega\times 1"']&T(A_3\times A_1)\times TA_2\ar[u," T\cong\times 1"']\ar[r,"\omega"']&T(A_3\times A_1\times A_2)\ar[uu,"T(\cong\times 1)"']
    \arrow[Rightarrow,from=D1,to=R1,"1\times \Gamma"',shorten <=3mm,shorten >=5mm]
    \arrow[Rightarrow,from=D2,to=R2,"\Gamma\times 1"',shorten <=3mm,shorten >=5mm]
    \end{tikzcd}}
\end{equation}
\end{center}
equals the whiskering
\begin{center}
\begin{tikzcd}
	{TA_1\times TA_2\times TA_3} & {T(A_1\times A_2)\times TA_3} &[15pt] {T(A_1\times A_2\times A_3).}
	\arrow["{\omega\times 1}", from=1-1, to=1-2]
	\arrow[""{name=0, anchor=center, inner sep=0}, "\omega", curve={height=-12pt}, from=1-2, to=1-3]
	\arrow[""{name=1, anchor=center, inner sep=0}, "{\omega '}"', curve={height=12pt}, from=1-2, to=1-3]
	\arrow["\Gamma", between={0.2}{0.8}, Rightarrow, from=0, to=1]
\end{tikzcd}
\end{center}
\end{enumerate}
\end{lem}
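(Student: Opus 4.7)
The plan is to reduce each part of the lemma to a single application of one of the strength axioms for $\Gamma$ from \Cref{pseudocomm}, after both sides are expanded in terms of the primitive data $t_1$, $t_2$, $\mu$, and $\Gamma$. I focus on part (\ref{newlem10}); part (\ref{newlem9}) is handled by the dual argument.

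I first unpack the RHS. Using that $\omega_{A_2,A_3}=\mu\circ Tt_2\circ t_1$ and that $\Gamma_{A_1,A_2\times A_3}$ has source $\mu\circ Tt_2\circ t_1$ and target $\mu\circ Tt_1\circ t_2$ evaluated on $(A_1,A_2\times A_3)$, the RHS is the 2-cell $\Gamma_{A_1,A_2\times A_3}$ pre-whiskered with $1_{TA_1}\times \omega_{A_2,A_3}$. Next I unpack the two inner 2-cells of the LHS pasting. The top cell $\Gamma\times 1$ is $\Gamma_{A_1,A_2}\times 1_{TA_3}$, evaluated on $(A_1,A_2)$. The bottom cell $1\times \Gamma$, after using \Cref{omega'} to recognize the long path through $1\times\cong$, $1\times\omega_{A_3,A_1}$, and $T(1\times\cong)$ as an alternative expression for $\omega'_{A_1,A_3}$, is $1_{TA_2}\times \Gamma_{A_1,A_3}$. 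The LHS thus expresses, step by step, the decomposition of moving $A_1$ past $A_2$ and then past $A_3$.

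The crux of the argument is axiom (\ref{axiomstrength3}) of \Cref{pseudocomm}, which asserts $\Gamma_{A,B\times C}\circ (1_{TA}\times {t_1}_{B,C})={t_1}_{A\times B,C}\circ (\Gamma_{A,B}\times 1_C)$. Applied to $(A,B,C)=(A_1,A_2,A_3)$ and combined with the associativity of $t$, naturality of $\mu$, and the modification property of $\Gamma$ spelled out in \Cref{strongdiagram}, this axiom identifies the vertical composite of $\Gamma_{A_1,A_2}\times 1_{TA_3}$ and $1_{TA_2}\times \Gamma_{A_1,A_3}$ (suitably whiskered with strengths and swaps) with the single 2-cell $\Gamma_{A_1,A_2\times A_3}$ pre-whiskered by $1\times \omega_{A_2,A_3}$, which is precisely the RHS. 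Part (\ref{newlem9}) follows the same template, with axiom (\ref{axiomstrength1}) of \Cref{pseudocomm} playing the role of (\ref{axiomstrength3}).

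The main obstacle is the bookkeeping of commuting subdiagrams in the large pastings. Many of the arrows in the LHS equate by naturality of $\mu$, $t_1$, $t_2$ or by the coherence of the symmetry isomorphisms $\cong$ in $\K$ with $t$, but locating the exact spot to insert the single strength axiom for $\Gamma$ requires care. I anticipate some preliminary massaging using \Cref{ij}, which makes $\Gamma$-whiskerings independent of the chosen partition, to put the pastings into a normal form in which axiom (\ref{axiomstrength3}) (respectively (\ref{axiomstrength1})) can be read off directly.
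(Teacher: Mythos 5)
Your reduction has a genuine gap at its central step. You claim that axiom (\ref{axiomstrength3}) of \Cref{pseudocomm}, combined with associativity of $t$, naturality of $\mu$, and the modification property of $\Gamma$, identifies the vertical composite of the two 2-cells $\Gamma_{A_1,A_2}\times 1$ and $1\times\Gamma_{A_1,A_3}$ with the single 2-cell $\Gamma_{A_1,A_2\times A_3}$ pre-whiskered by $1\times\omega_{A_2,A_3}$. The tools you list cannot do this: the three strength axioms, 2-naturality, and the modification axiom each relate one whiskering of a single instance of $\Gamma$ to another whiskering of that same instance (this is precisely the content of \Cref{ij}), so they preserve the number of $\Gamma$-cells occurring in a pasting. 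The left-hand side of \Cref{newlem1} contains two $\Gamma$'s composed vertically while the right-hand side contains one, so an axiom that changes this count is unavoidable. Note also that axiom (\ref{axiomstrength3}) whiskers $\Gamma_{A,B\times C}$ by $1_{TA}\times {t_1}_{B,C}$ with the third variable $C$ \emph{not} carrying a $T$; in your situation the third factor is $TA_3$, and the whiskering $1\times\omega_{A_2,A_3}=1\times(\mu\circ Tt_2\circ t_1)$ involves a multiplication $\mu$ that the strength axioms cannot absorb.

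The missing ingredient is the $\mu$-compatibility axiom (\ref{axiommu1gamma}) (\Cref{pseudomu2}). The paper's proof of part (\ref{newlem10}) begins by writing the right-hand whiskering as $\Gamma_{A_1,A_2\times A_3}$ pre-composed with $1\times\mu$ (after $(1\times Tt_2)\circ(1\times t_1)$) and applying (\ref{axiommu1gamma}) to split the single $\Gamma$ into a pasting of a $\Gamma$ and a $T\Gamma$; only after this splitting do the strength axioms (\ref{axiomstrength3}), (\ref{axiomstrength1}) and (\ref{axiomstrength2}), together with the modification property, match the two resulting pieces with $\Gamma_{A_1,A_2}\times 1$ and $1\times\Gamma_{A_1,A_3}$ respectively. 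The same objection applies to your plan for part (\ref{newlem9}), which requires (\ref{axiommu2gamma}) in addition to (\ref{axiomstrength1}). Your unpacking of the two sides is otherwise accurate, so the outline can be repaired by inserting the $\mu$-axiom as the first move.
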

\begin{proof}
For part (1) we start from 

\begin{center}
\adjustbox{scale=0.9}{
\begin{tikzcd}
	{TA_1\times TA_2\times TA_3} && {TA_1\times T^2(A_2\times A_3)} & {TA_1\times T(A_2\times A_3)} & {T(A_1\times A_2\times A_3)}
	\arrow["{(1\times Tt_2)\circ (1\times t_1)}", from=1-1, to=1-3]
	\arrow["{1\times \mu}", from=1-3, to=1-4]
	\arrow[""{name=0, anchor=center, inner sep=0}, "\omega", curve={height=-12pt}, from=1-4, to=1-5]
	\arrow[""{name=1, anchor=center, inner sep=0}, "{\omega '}"', curve={height=12pt}, from=1-4, to=1-5]
	\arrow["\Gamma", between={0.2}{0.8}, Rightarrow, from=0, to=1]
\end{tikzcd}}
\end{center}

By (\ref{axiommu1gamma}) in \Cref{pseudocomm},  the previous whiskering equals the pasting diagram
\begin{center}
\begin{equation}\label{newlem3}
\adjustbox{scale=0.9}{
 \begin{tikzcd}
        TA_1\times TA_2\times TA_3\ar[r,"(1\times Tt_2)\circ (1\times t_1)"]&[10pt] TA_1\times T^2(A_2\times A_3)\ar[d,"t_2"']\ar[r,"t_1"]&[-10pt]T(A_1\times T^2(A_2\times A_3))\ar[r,"Tt_2"]&[-15pt]|[alias=D1]|T^2(A_1\times T(A_2\times A_3))\ar[d,"\mu"]\\
       & |[alias=R1]|T(TA_1\times T(A_2\times TA_3))\ar[d,"Tt_2"']\ar[r,"Tt_1"']&|[alias=D2]|T^2(A_1\times T(A_2\times A_3))\ar[d,"T^2t_2"]\ar[r,"\mu"']&T(A_1\times T(A_2\times A_3))\ar[d,"Tt_2"]\\
        &T^2(TA_1\times A_2\times A_3)\ar[d,"T^2t_1"]&T^3(A_1\times A_2\times A_3)\ar[r,"\mu"]\ar[d,"T\mu"]&T^2(A_1\times A_2\times A_3)\ar[d,"\mu"]\\
        &|[alias=R2]|T^3(A_1\times A_2\times A_3)\ar[r,"T\mu"']&T^2(A_1\times A_2\times A_3)\ar[r,"\mu"']&T(A_1\times A_2\times A_3).
        \arrow[Rightarrow,from=D1,to=R1,"\Gamma"',shorten <=15mm,shorten >=15mm]
        \arrow[Rightarrow,from=D2,to=R2,"T\Gamma"',shorten <=12mm,shorten >=12mm]
    \end{tikzcd}}
\end{equation}
\end{center}
First we will prove that the whiskering
\begin{center}
    \begin{equation}\label{newlem4}
        \adjustbox{scale=0.9}{\begin{tikzcd}
	{TA_1\times TA_2\times TA_3} &[15pt] {TA_1\times T^2(A_2\times A_3)} & {T(A_1\times T(A_2\times A_3))} & {T(A_1\times A_2\times A_3)}
	\arrow["{(1\times Tt_2)\circ (1\times t_1)}", from=1-1, to=1-2]
	\arrow[""{name=0, anchor=center, inner sep=0}, "\omega"', curve={height=12pt}, from=1-2, to=1-3]
	\arrow[""{name=1, anchor=center, inner sep=0}, "\omega", curve={height=-12pt}, from=1-2, to=1-3]
	\arrow["{\mu\circ Tt_2}", from=1-3, to=1-4]
	\arrow["\Gamma"', between={0.2}{0.8}, Rightarrow, from=1, to=0]
\end{tikzcd}}
    \end{equation}
\end{center}
coming from the previous diagram equals the whiskering
\begin{center}
\adjustbox{scale=0.9}{
\begin{tikzcd}
	{TA_1\times TA_2\times TA_3} &[15pt] {T(A_1\times A_2)\times TA_3} & {T(A_1\times A_2\times TA_3)} &[10pt] {T(A_1\times A_2\times A_3)}
	\arrow[""{name=0, anchor=center, inner sep=0}, "{\omega '\times 1}"', curve={height=12pt}, from=1-1, to=1-2]
	\arrow[""{name=1, anchor=center, inner sep=0}, "{\omega\times 1}", curve={height=-12pt}, from=1-1, to=1-2]
	\arrow["{t_1}", from=1-2, to=1-3]
	\arrow["{\mu\circ (Tt_3)}", from=1-3, to=1-4]
	\arrow["{\Gamma\times 1}", between={0.2}{0.8}, Rightarrow, from=1, to=0]
\end{tikzcd}}
\end{center}
coming from \Cref{newlem1}. By (\ref{axiomstrength3}) in  \Cref{pseudocomm},  the previous whiskering equals
\begin{center}
\adjustbox{scale=0.8}{ \begin{tikzcd}
        TA_1\times TA_2\times TA_3\ar[r,"1\times t_1"]&[-10pt]TA_1\times T(A_2\times TA_3)\ar[r,bend left,"\omega"name=D2]\ar[r,bend right,"\omega'"{swap,name=R2}]\ar[d,"1\times Tt_2"']&T(A_1\times A_2\times TA_3)\ar[d,"T(1\times t_2)"']\ar[dr,"Tt_3"]&[-10pt]\\[10pt]
        &TA_1\times T^2(A_2\times A_3)\ar[r,"\omega '"']&T(A_1\times T(A_2\times A_3))\ar[r,"Tt_2"']&T^2(A_1\times A_2\times A_3)\ar[r,"\mu"]&[-10pt]T(A_1\times A_2\times A_3).
        \arrow[Rightarrow,from=D2,to=R2,near start,"\Gamma ",shorten <=2mm,shorten >=2mm]
    \end{tikzcd}}
\end{center}
Since $\Gamma$ is a modification, the previous diagram equals \Cref{newlem4}. To finish part (1), we will prove that the whiskering 
\begin{center}
\begin{equation}\label{newlem5}
\adjustbox{scale=0.85}{
\begin{tikzcd}
	{TA_1\times TA_2\times TA_3} &[25pt] {T(TA_1\times T(A_2\times A_3))} & {T^2(A_1\times A_2\times A_3)} &[-15pt] {T(A_1\times A_2\times A_3)}
	\arrow["{t_2\circ (1\times Tt_2)\circ (1\times t_1)}", from=1-1, to=1-2]
	\arrow[""{name=0, anchor=center, inner sep=0}, "{T\omega '}"', curve={height=12pt}, from=1-2, to=1-3]
	\arrow[""{name=1, anchor=center, inner sep=0}, "{T\omega}", curve={height=-18pt}, from=1-2, to=1-3]
	\arrow["\mu", from=1-3, to=1-4]
	\arrow["T\Gamma",between={0.2}{0.8}, Rightarrow, from=1, to=0]
\end{tikzcd}}
\end{equation}
\end{center}
coming from \Cref{newlem3} equals the whiskering
\begin{center}
\adjustbox{scale=0.9}{
    \begin{tikzcd}
        TA_1\times TA_2\times TA_3\ar[r,"\cong\times 1"]
       &[-15pt] TA_2\times TA_1\times TA_3\ar[r,bend left=15,"1\times \omega"name=D2]\ar[r,bend right=15,"1\times \omega'"{swap,name=R2}]\ar[d,"t_1"]&TA_2\times T\ar[d,"t_1"](A_1\times A_3)\\
       & T(A_2\times TA_1\times TA_3)\ar[r,"T(1\times \omega ')"']&T(A_2\times T(A_1\times A_3))\ar[r,"T(\cong \times 1)\circ \mu \circ Tt_2"]&[15pt]T(A_1\times A_2 \times A_3)
       \arrow[Rightarrow,from=D2,to=R2,"1\times \Gamma ",shorten <=1mm,shorten >=1mm]
    \end{tikzcd}}
\end{center}
coming from \Cref{newlem1}. By 2-naturality of $t_1$, the previous diagram equals
\begin{center}
    \adjustbox{scale=0.75}{
\begin{tikzcd}
	{TA_1\times TA_2\times TA_3} &[-10pt] {T(A_2\times TA_1\times TA_3)} &[20pt] {T(A_2\times T(A_1\times A_3))} &[-15pt] {T^2(A_2\times A_1\times A_3)} &[-10pt] {T(A_1\times A_2\times A_3).}
	\arrow["{t_1\circ(\cong\times 1)}", from=1-1, to=1-2]
	\arrow[""{name=0, anchor=center, inner sep=0}, "{T(1\times \omega)}", curve={height=-12pt}, from=1-2, to=1-3]
	\arrow[""{name=1, anchor=center, inner sep=0}, "{T(1\times \omega')}"', curve={height=12pt}, from=1-2, to=1-3]
	\arrow["{Tt_2}", from=1-3, to=1-4]
	\arrow["{\mu\circ (\cong\times 1)}", from=1-4, to=1-5]
	\arrow["T(1\times \Gamma)"between={0.2}{0.8}, Rightarrow, from=0, to=1]
\end{tikzcd}}
\end{center}
By  (\ref{axiomstrength1}) in \Cref{pseudocomm}, this whiskering equals
\begin{center}
\adjustbox{scale=0.75}{
    \begin{tikzcd}
        TA_1\times TA_2\times TA_3\ar[r,"t_2"]\ar[d,"\cong\times 1"']&[-10pt]T(A_1\times A_2\times TA_3)\ar[d,"T(\cong\times 1)"] \ar[r,"T(t_1\times 1)"]&[-5pt]T(T(A_1\times A_2)\times TA_3)\ar[r,"T\omega"]\ar[d,"T(T\cong \times 1)"']&[-5pt]T^2(A_1\times A_2\times A_3)\ar[d,"T^2(\cong\times 1)"]\ar[r,"\mu"]&[-10pt]T(A_1\times A_2\times A_3)\ar[d,"T(\cong\times 1)"]\\[10pt]
        TA_2\times TA_1\times TA_3\ar[r,"t_1"']&T(A_2\times TA_1\times TA_3)\ar[r,"T(t_2\times 1)"']&T(T(A_2\times A_1)\times TA_3)\ar[r,bend left,"T \omega"name=D2]\ar[r,bend right,"T \omega' "{swap,name=R2}]&T^2(A_2\times A_1\times A_3)\ar[r,"\mu"]&T(A_2\times A_1\times A_3)\ar[d,"T(\cong \times 1)"]\\[-10pt]
        &&&&T(A_1\times A_2\times A_3).
        \arrow[Rightarrow,from=D2,to=R2,near start,"T\Gamma",shorten <=2mm,shorten >=2mm]
    \end{tikzcd}}
\end{center}
Since $\Gamma$ is a modificiation, we can write the previous whiskering as
\begin{center}
\adjustbox{scale=0.75}{
\begin{tikzcd}
	{TA_1\times TA_2\times TA_3} &[-15pt] {T(TA_1\times A_2\times TA_3)} &[-5pt] {T(T(A_1\times A_2)\times TA_3)} &[20pt] {T^2(A_1\times A_2\times A_3)} &[-15pt] {T(A_1\times A_2\times A_3).}
	\arrow["{t_2}", from=1-1, to=1-2]
	\arrow["{T(t_1\times 1)}", from=1-2, to=1-3]
	\arrow[""{name=0, anchor=center, inner sep=0}, "{T\omega}", curve={height=-12pt}, from=1-3, to=1-4]
	\arrow[""{name=1, anchor=center, inner sep=0}, "{T\omega '}"', curve={height=12pt}, from=1-3, to=1-4]
	\arrow["\mu", from=1-4, to=1-5]
	\arrow["{T\Gamma}", between={0.2}{0.8}, Rightarrow, from=0, to=1]
\end{tikzcd}}
\end{center}
By (\ref{axiomstrength2}) in \Cref{pseudocomm}, we get
\begin{center}
\adjustbox{scale=0.9}{
    \begin{tikzcd}
        TA_1\times TA_2\times TA_3\ar[r,"1\times t_1"']\ar[rr,bend left=10,"t_2"]&[-10pt]TA_1\times T(A_2\times TA_3)\ar[r,"t_2"]\ar[d,"1\times Tt_2"']&[-10pt]T(TA_1\times A_2\times TA_3)\ar[d,"T(1\times t_2)"']\\
        &TA_1\times T^2(A_2\times A_3)\ar[r,"t_2"']&T(TA_1\times T(A_2\times A_3))\ar[r,bend left,"T \omega"name=D2]\ar[r,bend right,"T \omega' "{swap,name=R2}]&T^2(A_1\times A_2\times A_3)\ar[d,"\mu"]\\[-10pt]
        &&&T(A_1\times A_2\times A_3),
        \arrow[Rightarrow,from=D2,to=R2,"T\Gamma",shorten <=1mm,shorten >=1mm]
    \end{tikzcd}}
\end{center}
which is precisely \Cref{newlem5}. We have proven part (\ref{newlem10}). Part (\ref{newlem9}) can be proven in a similar fashion.
\end{proof}
The next Lemma is the Yang-Baxter equation for pseudo commutative, strong 2-monads. Part (\ref{associativiyeqn}) is called the Associativity Equation in \cite{HP02}. Symmetry is not required.
\begin{lem}\label{yangbaxter}
Let $(T,\eta ,\mu ,t,\Gamma )$ be a pseudo commutative, strong, 2-monad. Then:
\begin{enumerate}
    \item The pasting
    \begin{center}
    \begin{equation}\label{wii+1}
        \begin{tikzcd}
    TA_1\times TA_2\times TA_3\ar[r,"1\times \omega"]\ar[d,"1\times \cong"']&|[alias=D1]|TA_1\times T(A_2\times A_3)\ar[r,"\omega"]&T(A_1\times A_2\times A_3)\\
    |[alias=R1]|TA_1\times TA_3\times TA_2\ar[rd,"\omega\times 1"']\ar[dd,"\cong\times 1"']\ar[r,"1\times \omega"']&TA_1\times T(A_3\times A_2)\ar[r,"\omega"']\ar[u,"1\times T\cong"']&T(A_1\times A_3\times A_2)\ar[u,"T(1\times\cong)"']\\[-10pt]
    &|[alias=D2]|T(A_1\times A_3)\times TA_2\ar[ru,"\omega"']\\
    |[alias=R2]|TA_3\times TA_1\times TA_2\ar[rd,"1\times \omega"']\ar[r,"\omega\times 1"']\ar[dd,"1\times \cong"']&T(A_3\times A_1)\times TA_2\ar[u," T\cong\times 1"']\ar[r,"\omega"']&T(A_3\times A_1\times A_2)\ar[uu,"T(\cong\times 1)"']\\[-10pt]
    &|[alias=D3]|TA_3\times T(A_1\times A_2)\ar[ru,"\omega"']\\
    |[alias=R3]|TA_3\times TA_2\times TA_1\ar[r,"1\times \omega"']&TA_3\times T(A_2\times A_1)\ar[r,"\omega"']\ar[u,"1\times T\cong"']&T(A_3\times A_2\times A_1)\ar[uu,"T(1\times\cong)"']
    \arrow[Rightarrow,from=D1,to=R1,"1\times \Gamma"',shorten <=3mm,shorten >=5mm]
    \arrow[Rightarrow,from=D2,to=R2,"\Gamma\times 1"',shorten <=3mm,shorten >=5mm]
    \arrow[Rightarrow,from=D3,to=R3,"1\times \Gamma"',shorten <=3mm,shorten >=5mm]
    \end{tikzcd}
    \end{equation}
\end{center}
equals the horizontal composite
\begin{center}
\begin{tikzcd}
	{TA_1\times TA_2\times TA_3} &[15pt] {TA_1\times T(A_2\times A_3)} & {T(A_1\times A_2\times A_3).}
	\arrow[""{name=0, anchor=center, inner sep=0}, "{1\times \omega}", curve={height=-12pt}, from=1-1, to=1-2]
	\arrow[""{name=1, anchor=center, inner sep=0}, "{1\times \omega'}"', curve={height=12pt}, from=1-1, to=1-2]
	\arrow[""{name=2, anchor=center, inner sep=0}, "\omega", curve={height=-12pt}, from=1-2, to=1-3]
	\arrow[""{name=3, anchor=center, inner sep=0}, "{\omega '}"', curve={height=12pt}, from=1-2, to=1-3]
	\arrow["{1\times \Gamma}", between={0.2}{0.8}, Rightarrow, from=0, to=1]
	\arrow["\Gamma", between={0.2}{0.8}, Rightarrow, from=2, to=3]
\end{tikzcd}
\end{center}
\item The pasting
\begin{center}
\begin{equation}
    \begin{tikzcd}
    TA_1\times TA_2\times TA_3\ar[d,"\cong\times 1"']\ar[r," \omega\times 1"]&|[alias=D1]|T(A_1\times A_3)\times TA_2\ar[r,"\omega"']&T(A_1\times A_2\times A_3)\\
    |[alias=R1]|TA_2\times TA_1\times TA_3\ar[rd,"1\times \omega"']\ar[r,"\omega\times 1"']\ar[dd,"1\times \cong"']&T(A_2\times A_1)\times TA_3\ar[r,"\omega"']\ar[u,"T\cong\times 1"]&T(A_2\times A_1\times A_3)\ar[u,"T(\cong\times 1)"']\\[-10pt]
    &|[alias=D2]|TA_2\times T(A_1\times A_3)\ar[ru,"\omega"']\\
    |[alias=R2]|TA_2\times TA_3\times TA_1\ar[r,"1\times \omega"']\ar[dd,"\cong\times 1"']\ar[rd,"\omega\times 1"']&TA_2\times T(A_3\times A_1)\ar[r,"\omega"]\ar[u,"1\times T\cong"']&T(A_2\times A_3\times A_1)\ar[uu,"T(1\times\cong)"']\\[-10pt]
    &|[alias=D3]|T(A_2\times A_3)\times TA_1\ar[ru,"\omega"']\\
    |[alias=R3]|TA_3\times TA_2\times TA_1\ar[r,"\omega\times 1"']&T(A_3\times A_2)\times TA_1\ar[u,"T\cong \times 1"']\ar[r,"\omega"']&T(A_3\times A_2\times A_1)\ar[uu,"T(\cong\times 1)"']
    \arrow[Rightarrow,from=D1,to=R1," \Gamma\times 1"',shorten <=3mm,shorten >=5mm]
    \arrow[Rightarrow,from=D2,to=R2,"1\times\Gamma"',shorten <=3mm,shorten >=5mm]
    \arrow[Rightarrow,from=D3,to=R3,"\Gamma\times 1"',shorten <=3mm,shorten >=5mm]
    \end{tikzcd}
    \end{equation}
\end{center}
equals the horizontal composite
\begin{center}
\begin{equation}\label{wii+2}
\begin{tikzcd}
	{TA_1\times TA_2\times TA_3} &[15pt] {T(A_1\times A_2)\times TA_3} & {T(A_1\times A_2\times A_3).}
	\arrow[""{name=0, anchor=center, inner sep=0}, "{\omega\times 1}", curve={height=-12pt}, from=1-1, to=1-2]
	\arrow[""{name=1, anchor=center, inner sep=0}, "{\omega' \times 1}"', curve={height=12pt}, from=1-1, to=1-2]
	\arrow[""{name=2, anchor=center, inner sep=0}, curve={height=-12pt}, from=1-2, to=1-3]
	\arrow[""{name=3, anchor=center, inner sep=0}, curve={height=12pt}, from=1-2, to=1-3]
	\arrow["{\Gamma\times 1}", between={0.2}{0.8}, Rightarrow, from=0, to=1]
	\arrow["\Gamma", between={0.2}{0.8}, Rightarrow, from=2, to=3]
\end{tikzcd}
\end{equation}
\end{center}
\item\label{associativiyeqn} The pastings and horizontal composites in  (1) and (2) are equal.
\end{enumerate}
\end{lem}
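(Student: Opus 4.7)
The plan is to reduce each pasting in parts (1) and (2) to a horizontal composite of two $\Gamma$-cells by applying \Cref{newlemma} to an appropriate sub-pasting, and then to deduce part (3) from parts (1) and (2) by comparing the resulting horizontal composites.

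For part (1), the pasting \Cref{wii+1} contains three $\Gamma$-cells arranged vertically: $1\times \Gamma$, $\Gamma\times 1$, and $1\times \Gamma$ from top to bottom. The two lower cells, together with the intervening swap and coherence isomorphisms, form a sub-pasting that matches the shape of \Cref{newlem1}. Applying \Cref{newlemma}(1) collapses this sub-pasting into a single whiskering of $\Gamma$ with $1\times \omega$. The remaining top $1\times \Gamma$-cell then combines vertically with this whiskering, yielding, by the exchange law in the 2-category $\K$, the horizontal composite of $1\times \Gamma$ with $\Gamma$ as claimed. Part (2) is entirely analogous: \Cref{newlemma}(2) collapses the two $\Gamma$-cells whose arrangement matches \Cref{newlem2}, and the remaining cell combines to yield the horizontal composite of $\Gamma\times 1$ with $\Gamma$.

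For part (3), it suffices by parts (1) and (2) to show that the two horizontal composites are equal. Both are 2-cells with common source $\omega\circ (1\times \omega) = \omega\circ(\omega\times 1)$ (the 1-cell part of associativity of $\omega$ from \Cref{omegaass}) and common target $\omega'\circ(1\times \omega') = \omega'\circ(\omega'\times 1)$ (an analogous associativity for $\omega'$, which follows because $\omega'$ is obtained from $\omega$ by composition with coherence swaps). To verify the equality, I would either expand each horizontal composite using the two interleavings of vertical composition with whiskering and compare them via the modification property of $\Gamma$, or reapply \Cref{newlemma} once more in the opposite order to convert one of the pastings directly into the other.

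The main technical obstacle will be the bookkeeping required to verify that the sub-pastings inside \Cref{wii+1} and its analogue for (2) exactly match the shapes in \Cref{newlem1} and \Cref{newlem2}, including the correct alignment of all intervening coherence swap isomorphisms. Once these matches are set up, the remaining manipulations are formal applications of the exchange law and the modification property. Conceptually, part (3) is the braid relation $\sigma_1\sigma_2\sigma_1 = \sigma_2\sigma_1\sigma_2$ in $\Sigma_3$ promoted to the 2-cell level, and its validity rests ultimately on the three strength axioms in \Cref{pseudocomm}.
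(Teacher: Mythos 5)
Your treatment of parts (1) and (2) is exactly the paper's argument: collapse the two lower (resp.\ upper) $\Gamma$-cells via \Cref{newlemma}, absorb the intervening swaps using the modification property of $\Gamma$ (this, rather than the exchange law, is the tool that converts the resulting whiskering along $1\times\omega$ of the permuted variables into one along $1\times\omega'$), and then vertically compose with the remaining cell to recognize the horizontal composite. For part (3), however, only the second of your two proposed routes is viable, and it is the one the paper takes: apply \Cref{newlemma}(\ref{newlem9}) to the top two cells of \Cref{wii+1}, which converts that pasting directly into the horizontal composite \Cref{wii+2}. Your first route --- expanding both horizontal composites by the two interleavings and comparing via the modification property --- does not obviously close, because the two composites are taken along different intermediate 1-cells ($TA_1\times T(A_2\times A_3)$ versus $T(A_1\times A_2)\times TA_3$), so neither the exchange law nor the modification axiom (which only governs whiskering by 1-cells of the form $Tf\times Tg$) relates them; the genuine input needed is \Cref{newlemma}, i.e.\ the strength axioms. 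Also note that knowing the two composites share source and target (which requires the associativity of $\omega$ and its transported version for $\omega'$) is of course only a sanity check, not part of the proof of their equality.
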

\begin{proof}
For (1), notice that by the \Cref{newlemma}, the pasting diagram
\begin{center}
\adjustbox{scale=0.85}{
    \begin{tikzcd}
     TA_1\times TA_2\times TA_3\ar[d,"1\times \cong"']&&T(A_1\times A_2\times A_3)\\
     TA_1\times TA_3\times TA_2\ar[d,"\cong\times 1"']\ar[r,"\omega\times 1"]&|[alias=D1]|T(A_1\times A_3)\times TA_2\ar[r,"\omega"]&T(A_1\times A_3\times A_2)\ar[u,"T(1\times \cong)"]\\
     |[alias=R1]|TA_3\times TA_1\times TA_2\ar[dd,"1\times \cong"']\ar[dr,"1\times \omega"']\ar[r,"\omega\times 1"]&T(A_3\times A_1)\times TA_2\ar[r,"\omega"]\ar[u,"T\cong\times 1"]&T(A_3\times A_1\times A_2)\ar[u,"T(\cong\times 1)"]\\
     [-15pt]
     &|[alias=D2]|TA_3\times T(A_1\times A_2)\ar[ru,"\omega"']&\\
     |[alias=R2]|TA_3\times TA_2\times TA_1\ar[r,"1\times \omega"']&TA_3\times T(A_2\times A_1)\ar[u,"1\times T\cong"]\ar[r,"\omega"']&T(A_3\times A_2\times A_1)\ar[uu,"T(1\times \cong )"']
     \arrow[Rightarrow,from=D1,to=R1,near start," \Gamma\times 1"',shorten <=6mm,shorten >=7mm]
        \arrow[Rightarrow,from=D2,to=R2,near start,"1\times\Gamma"',shorten <=6mm,shorten >=8mm]
    \end{tikzcd}}
\end{center}
equals the whiskering
\begin{center}
\adjustbox{scale=0.85}{
    \begin{tikzcd}
        TA_1\times TA_2\times TA_3\ar[d,"1\times \cong"']\ar[r,"1\times \omega '"]&TA_1\times T(A_2\times A_3)\ar[r,"\omega"]&T(A_1\times A_2\times A_3)\\
        TA_1\times TA_3\times TA_2\ar[r,"1\times \omega"']&TA_1\times T(A_3\times A_2)\ar[u,"1\times T\cong"]\ar[r,bend left,"\omega "{name=D1}]\ar[r,bend right,"\omega '"{swap,name=R1}]&T(A_3\times A_2\times A_1).\ar[u,"T(1\times \cong)"']
        \arrow[Rightarrow,from=D1,to=R1,near start,"\Gamma"',shorten <=2mm,shorten >=2mm]
    \end{tikzcd}}
\end{center}
Since $\Gamma$ is a modification, the last whiskering equals
\begin{center}
\adjustbox{scale=0.85}{
\begin{tikzcd}
	{TA_1\times TA_2\times TA_3} & {TA_1\times T(A_2\times A_3)} & {T(A_1\times A_2\times A_3).}
	\arrow["{1\times \omega '}", from=1-1, to=1-2]
	\arrow[""{name=0, anchor=center, inner sep=0}, "\omega", curve={height=-12pt}, from=1-2, to=1-3]
	\arrow[""{name=1, anchor=center, inner sep=0}, "{\omega '}"', curve={height=12pt}, from=1-2, to=1-3]
	\arrow["\Gamma", between={0.2}{0.8}, Rightarrow, from=0, to=1]
\end{tikzcd}}
\end{center}
Part (1) follows from this and part (2) is proven similarly. To prove part (3) we will prove that diagrams (\ref{wii+1}), and (\ref{wii+2}) are equal. We are done by (\ref{newlem9}) in \Cref{newlemma} since the whiserkings
\begin{center}
\adjustbox{scale=0.85,center}{\begin{tikzcd}
	{TA_1\times TA_2\times TA_3} &[-10pt] {TA_3\times TA_1\times TA_2} &[10pt] {TA_3\times T(A_1\times A_2),} &[-10pt] {T(A_3\times A_1\times A_2)} &[-10pt] {T(A_1\times A_2\times A_3)}
	\arrow["\cong", from=1-1, to=1-2]
	\arrow[""{name=0, anchor=center, inner sep=0}, "{1\times \omega '}"', curve={height=12pt}, from=1-2, to=1-3]
	\arrow[""{name=1, anchor=center, inner sep=0}, "{1\times \omega}", curve={height=-12pt}, from=1-2, to=1-3]
	\arrow["\omega", from=1-3, to=1-4]
	\arrow["{T\cong}", from=1-4, to=1-5]
	\arrow["1\times \Gamma",between={0.2}{0.8}, Rightarrow, from=1, to=0]
\end{tikzcd}}
\end{center}
and
\begin{center}
\adjustbox{scale=0.85}{
\begin{tikzcd}
	{TA_1\times TA_2\times TA_3} &[10pt] {T(A_1\times A_2)\times TA_3} & {T(A_1\times A_2\times A_3)}
	\arrow[""{name=0, anchor=center, inner sep=0}, "{\omega\times 1}", curve={height=-12pt}, from=1-1, to=1-2]
	\arrow[""{name=1, anchor=center, inner sep=0}, "{\omega'\times 1}"', curve={height=12pt}, from=1-1, to=1-2]
	\arrow["{\omega'}"', from=1-2, to=1-3]
	\arrow["{\Gamma\times 1}", between={0.2}{0.8}, Rightarrow, from=0, to=1]
\end{tikzcd}}
\end{center}
are equal. 
\end{proof}
In the presence of symmetry, we can give (a slight generalization of) the previous lemma the following interpretation.
\begin{lem}
Let $(T,\eta,\mu,t,\Gamma )$ be a symmetric, pseudo commutative, strong 2-monad. Then, the pasting diagram
\begin{center}
\adjustbox{scale=0.9}{
    \begin{tikzcd}
        \K(A_1\times\cdots \times A_n,C)\ar[r,"T"]\ar[d,"\sigma_i"']&[-10pt]|[alias=R1]|\talg (TA_1,\dots, TA_n;TC)\ar[d,"\sigma_i"]\\
        |[alias=D1]|\K(A_1\times \cdots\times A_{i+1}\times A_i\times \cdots\times A_n,C)\ar[r,"T"{swap}]\ar[d,"\sigma_{i+1}"']&|[alias=R2]|\talg (TA_1,\dots,TA_{i+1},TA_{i},\dots,TA_n;TC)\ar[d,"\sigma_{i+1}"]\\
         |[alias=D2]|\K(A_1\times\cdots\times A_{i+1}\times A_{i+2}\times A_i\times \cdots\times A_n,C)\ar[r,"T"']\ar[d,"\sigma_i"']&|[alias=R3]|\talg (TA_1,\dots,TA_{i+1},TA_{i+2},TA_i,\dots , TA_n;TC)\ar[d,"\sigma_i"]\\
        |[alias=D3]|\K(A_1\times\cdots\times A_{i+2}\times A_{i+1}\times A_i\times \cdots\times A_n,C)\ar[r,"T"']&\talg (TA_1,\dots,TA_{i+2},TA_{i+1},TA_i,\dots , TA_n;TC),
        \arrow[Rightarrow,from=D1,to=R1,"T_{\sigma_i}",shorten <=11mm,shorten >=12mm]
        \arrow[Rightarrow,from=D2,to=R2,"T_{\sigma_{i+1}}"',shorten <=11mm,shorten >=12mm]
        \arrow[Rightarrow,from=D3,to=R3,"T_{\sigma_{i}}"',shorten <=11mm,shorten >=12mm]
    \end{tikzcd}}
\end{center}
equals the pasting diagram
\begin{center}
\adjustbox{scale=0.9}{
    \begin{tikzcd}
        \K(A_1\times\cdots\times  A_n,C)\ar[r,"T"]\ar[d,"\sigma_{i+1}"']&[-10pt]|[alias=R1]|\talg (TA_1,\dots, TA_n;TC)\ar[d,"\sigma_{i+1}"]\\
        |[alias=D1]|\K(A_1\times \cdots\times A_{i+2}\times A_{i+1}\times \cdots\times A_n,C)\ar[r,"T"{swap}]\ar[d,"\sigma_{i}"']&|[alias=R2]|\talg (TA_1,\dots,TA_{i+2},TA_{i+1},\dots,TA_n;TC)\ar[d,"\sigma_{i}"]\\
         |[alias=D2]|\K(A_1\times\cdots\times A_{i+2}\times A_{i}\times A_{i+1}\times \cdots\times A_n,C)\ar[r,"T"']\ar[d,"\sigma_{i+1}"']&|[alias=R3]|\talg (TA_1,\dots,TA_{i+2},TA_{i},TA_{i+1},\dots , TA_n;TC)\ar[d,"\sigma_{i+1}"]\\
        |[alias=D3]|\K(A_1\times\cdots\times A_{i+2}\times A_{i+1}\times A_i\times \cdots\times A_n,C)\ar[r,"T"']&\talg (TA_1,\dots,TA_{i+2},TA_{i+1},TA_i,\dots , TA_n;TC).
        \arrow[Rightarrow,from=D1,to=R1,"T_{\sigma_{i+1}}",shorten <=11mm,shorten >=12mm]
        \arrow[Rightarrow,from=D2,to=R2,"T_{\sigma_{i}}"',shorten <=11mm,shorten >=12mm]
        \arrow[Rightarrow,from=D3,to=R3,"T_{\sigma_{i+1}}"',shorten <=11mm,shorten >=12mm]
    \end{tikzcd}}
\end{center}
\end{lem}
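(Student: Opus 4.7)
The plan is to unpack both pasting diagrams using the definition of $T_{\sigma_i}$ from \Cref{pseudosymmisos} and then to reduce the resulting equality of 2-cells to the Yang--Baxter equation \Cref{yangbaxter}(\ref{associativiyeqn}), which has already been established for an arbitrary pseudo commutative, strong 2-monad (without requiring symmetry).

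First, I would compute the component of each composite natural transformation on a test map $f\colon A_1\times\cdots\times A_n\to C$. By \Cref{pseudosymmisos}, each individual $T_{\sigma_j;g}$ is obtained by whiskering $Tg$ and a subsequent $T(\cong)$ with a 2-cell of the form $1\times \Gamma\times 1$ sandwiched between copies of $\omega$. Vertically composing the three such 2-cells that appear on either side of the claimed equality, and using naturality of $\omega$ (\Cref{omeganatural}) together with its associativity (\Cref{omegaass}) to absorb the intermediate $T(\cong)$'s and collapse the nested $\omega$'s, the component on each side can be rewritten as a single multicategorical whiskering of $Tf$ with one $T(\cong)$ on the left and, on the right, a pasting of three instances of $\Gamma$ acting on three adjacent input slots.

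Second, I would observe that only the factors $TA_i,TA_{i+1},TA_{i+2}$ participate nontrivially; all remaining slots carry identities throughout the computation, and the final postfix $T(\cong)\circ Tf$ is common to both sides because the underlying permutations $\sigma_i\sigma_{i+1}\sigma_i$ and $\sigma_{i+1}\sigma_i\sigma_{i+1}$ agree in $\Sigma_n$. After stripping away this common data, the desired equality reduces to an identity between two 2-cells $TA_i\times TA_{i+1}\times TA_{i+2}\to T(A_i\times A_{i+1}\times A_{i+2})$, each of which is a threefold $\Gamma$-pasting arranged in the pattern dictated by the corresponding word in the symmetric group. This identity is precisely \Cref{yangbaxter}(\ref{associativiyeqn}).

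The main obstacle will be the bookkeeping in the first step: translating each multicategorical whiskering of $T_{\sigma_j}$ with the subsequent input permutation into exactly the shape appearing in \Cref{yangbaxter} requires systematic use of the 2-naturality of $t_1,t_2,\mu$, the modification axiom for $\Gamma$, and the coherence identities already proven for $\omega$ (\Cref{omegais1cell,omega2}). In particular, because the two pastings in \Cref{yangbaxter} are formulated in terms of both $\omega$ and $\omega'$, I would expect to invoke the symmetry axiom of \Cref{Tsymmetric} together with \Cref{inverses} to convert $\omega'$ back to $\omega$ in the right places and match the exact source and target 1-cells of the two composites.
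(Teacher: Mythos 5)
Your proposal is correct and follows exactly the route the paper intends: the lemma is stated immediately after \Cref{yangbaxter} precisely as the interpretation, in the symmetric setting, of its part (\ref{associativiyeqn}), and the reduction you describe — unpacking $T_{\sigma_i}$ via \Cref{pseudosymmisos}, collapsing the nested $\omega$'s with \Cref{omegaass,omeganatural}, noting that only the three slots $i,i+1,i+2$ are active, and matching the two threefold $\Gamma$-pastings — is the intended argument. The bookkeeping caveats you flag (2-naturality, the modification axiom for $\Gamma$, and using \Cref{Tsymmetric} to interchange $\omega$ and $\omega'$) are the right ones.
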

The three previous lemmas give us the following.
\begin{thm}
Suppose that $(T,\eta,\mu,t,\Gamma )$ is a symmetric, pseudo commutative strong  2-monad and let $A_1,\dots, A_n,C$ be objects of $\K$. The transformations $T_{\sigma_i}$ for $1\leq i\leq n-1$ assemble together to give, for each $\sigma \in \Sigma_n,$ a unique transformation
\begin{center}
    \begin{tikzcd}
        \K (A_1\times \cdots\times A_n ,C)\ar[d,"\sigma"']\ar[r,"T"]&|[alias=R1]|\talg (TA_1,\dots,TA_n;TC)\ar[d,"\sigma"]\\
        |[alias=D1]|\K(A_{\sigma(1)}\times \cdots\times A_{\sigma(n)},C)\ar[r,"T"']&\talg (TA_{\sigma(1)},\dots,TA_{\sigma(n)};TC).
        \arrow[Rightarrow,from=D2,to=R2,"T_{\sigma}"',shorten <=11mm,shorten >=12mm]
    \end{tikzcd}
\end{center}
These satisfy the unit and the product permutation axiom in \Cref{pseudosymm}.
\end{thm}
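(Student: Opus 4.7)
The strategy is to exploit the presentation of $\Sigma_n$ by the generators $\sigma_1,\dots,\sigma_{n-1}$ together with the three standard relations (a), (b), (c). Given any $\sigma\in\Sigma_n$, choose a reduced word $\sigma=\sigma_{i_1}\sigma_{i_2}\cdots \sigma_{i_k}$ in the consecutive transpositions and define $T_\sigma$ to be the pasting of the natural transformations $T_{\sigma_{i_j}}$ (each whiskered by the appropriate action of the remaining transpositions on the top row, and by $T$ on the bottom row) stacked vertically in the order $i_1,\dots,i_k$. For $\sigma=\mathrm{id}$ we take the empty pasting, which is the identity natural transformation from $\sigma\circ T=T$ to $T\circ\sigma=T$; this immediately yields the unit axiom. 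Concatenation of words gives the product permutation axiom $T_{\sigma\tau}=T_\sigma\cdot T_\tau$ by construction, modulo well-definedness.

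The content of the theorem is therefore the independence of $T_\sigma$ on the chosen decomposition. By standard Tietze transformations, any two reduced words for the same $\sigma$ are related by finitely many applications of the relations (a), (b), (c). Hence it suffices to check that each of these relations is respected by the assignment $\sigma_i\mapsto T_{\sigma_i}$ as natural transformations. This is precisely the content of the three lemmas just proven: the first lemma (requiring symmetry of $T$) takes care of relation (a) $\sigma_i^2=\mathrm{id}$, showing that the vertical composite $T_{\sigma_i}\cdot T_{\sigma_i}$ is the identity; the second lemma handles relation (b) by showing that $T_{\sigma_i}$ and $T_{\sigma_j}$ can be interchanged in a pasting whenever $|i-j|>1$, using that the two pasting diagrams in question are both reducible to the same pasting involving a single $\omega\times\omega\times\omega$ and two commuting $\Gamma$'s; and the third (Yang--Baxter) lemma handles relation (c) by showing that the two ways of realizing a braid move yield the same pasting.

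Concretely, I would argue as follows. Given two reduced words $w$ and $w'$ for $\sigma$, apply a sequence of moves (a), (b), (c) to pass from $w$ to $w'$. At each intermediate step, the corresponding pasting of $T_{\sigma_i}$'s is equal to the previous one by the appropriate one of the three lemmas (whiskered on the unchanged parts by the remaining transpositions and by $T$). Since each rewriting step preserves the pasting, the endpoints $w$ and $w'$ produce the same natural transformation, so $T_\sigma$ is well-defined. Once well-definedness is established, the unit axiom is immediate from the empty-word definition, and the product axiom is immediate from concatenation of decompositions.

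The main obstacle is really bookkeeping rather than any new mathematical input: one must be careful that the three lemmas are stated (and were proven) in a form that allows whiskering by arbitrary disjoint transpositions and by $T$, so that the local rewriting arguments above apply inside any larger pasting. Relation (b) in particular requires checking that the transposition $\sigma_j$ acting far from the $i$-th and $(i+1)$-th slots genuinely commutes past $T_{\sigma_i}$ as a natural transformation between functors $\K(\overline A,C)\to \talg(\la T\overline A\ra\tau; TC)$; this was established by the disjointness lemma preceding the statement. Relation (c) is the most delicate, but the Yang--Baxter lemma was set up precisely to provide the needed equality after applying $T$ and pre-composing by $\omega_n$. With these verifications in hand the coherence argument is purely formal.
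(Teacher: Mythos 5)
Your proposal is correct and follows exactly the route the paper intends: the paper gives no written proof beyond ``the three previous lemmas give us the following,'' and those lemmas are precisely the verifications of the relations (a), (b), (c) in the standard presentation of $\Sigma_n$ that you invoke, so that $T_\sigma$ is well-defined on reduced words and the unit and product axioms follow from the empty word and concatenation. (The only quibble is terminological: what you need is not Tietze transformations but simply that two words in the generators represent the same permutation iff they are connected by the defining relations, which is the meaning of the presentation.)
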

We are just missing the top and bottom equivariance axioms to prove that our functor $T:\K\to\talg$ is pseudo symmetric. When $T$ is a pseudo commutative, strong 2-monad that fails to be symmetric, we can still give \Cref{yangbaxter} an interpretation using the Bruhat order of the symmetric group $\Sigma_n$ on generators $\sigma_i$ for $1\leq i \leq n-1.$
\begin{defi}
    Let $\Sigma_n$ be the symmetric group with generators $\{\sigma_i\}_{1\leq i<n}$ and presentation:
    \begin{itemize}
        \item $\sigma_i \sigma_i =1,$
        \item $\sigma_i\sigma_j=\sigma_j\sigma_i$ if $|i-j|\geq 2$
        \item $\sigma_i\sigma_{i+1}\sigma_i=\sigma_{i+1}\sigma_i\sigma_{i+1}.$
    \end{itemize}
    The length of a permutation $\sigma\in\Sigma_n,$ $\ell (\sigma)$, is the number of inversions of $\sigma,$ i.e., the number of couples $(i,j)$ such that $1\leq i<j\leq n$ and $\sigma(i)<\sigma(j).$ This agrees with the length of a minimal word for $\sigma$ in the previous presentation \cite[Prop. 1.5.2.]{BB05}. The weak right order on $\Sigma_n$ \cite[Def. 3.1.1.]{BB05}is the partial order on $\Sigma_n$ generated by declaring that $\sigma<\sigma\sigma_i$ when $\ell(\sigma)< \ell(\sigma\sigma_i)$ \cite[p. 66]{BB05}. This only happens when none of the reduced words for $\sigma$ end in $\sigma_i.$ The bottom of this order is the identity and the top is the reverse order permutation.
\end{defi}
\begin{rmk}\label{hpcoherence}
    Let $(T,\eta,\mu,t,\Gamma)$ be a pseudo commutative, strong 2-monad and $A_1,\dots,A_n$ objects of $\K$. We have the 1-cell
    \begin{center}
        \begin{tikzcd}
            \omega\colon A_1\times \cdots\times A_n\ar[r]&T(A_1\times \cdots\times A_n).
        \end{tikzcd}
    \end{center}
    Although we don't have a symmetric $\mathbf{Cat}$-multicategory, we still have a 1-cell $\omega_\sigma$ (it is called $t_\sigma$ on \cite{HP02}):
    \begin{center}
        \begin{tikzcd}
            TA_1\times \cdots\times TA_n\ar[r,"\omega_\sigma"]\ar[d,"\sigma^{-1}"']&T(A_1\times \cdots\times A_n)\\
            TA_{\sigma(1)}\times \cdots\times TA_{\sigma(n)}\ar[r,"\omega"']&T(A_{\sigma(1)}\times \cdots\times A_{\sigma(n)}).\ar[u,"T\sigma"']
        \end{tikzcd}
    \end{center}
    When $\ell(\sigma)<\ell(\sigma \sigma_i ),$ we can define a 2-cell $\omega_\sigma\to\omega_{\sigma\sigma_i}$ as
 \begin{center}
 \begin{equation}\label{bruhat}\adjustbox{scale=0.7}{
\begin{tikzcd}
	{TA_1\times \cdots\times TA_n} & {TA_{\sigma(1)}\times\cdots\times TA_{\sigma(n)}} & {TA_{\sigma(1)}\times\cdots\times TA_{\sigma(i+1)}\times TA_{\sigma(i)}\times \cdots\times TA_{\sigma(n)}} \\
	& {TA_{\sigma(1)}\times\cdots\times T\left( A_{\sigma(i)}\times A_{\sigma (i+1)}\right) \times \cdots\times TA_{\sigma(n)}} & {TA_{\sigma(1)}\times \cdots\times T\left(A_{\sigma(i+1)}\times A_{\sigma(i)}\right)\times \cdots\times TA_{\sigma(n)}} \\
	{T(A_1\times\cdots\times A_n)} & {T\left(A_{\sigma(1)}\times \cdots\times A_{\sigma(n)}\right)} & {T\left(A_{\sigma(1)}\times\cdots\times A_{\sigma(i+1)}\times A_{\sigma(i)}\times\cdots\times A_{\sigma(n)}\right)}
	\arrow["{{\sigma^{-1}}}", from=1-1, to=1-2]
	\arrow["{\omega_\sigma}"', from=1-1, to=3-1]
	\arrow["{{\sigma_{i}^{-1}}}", from=1-2, to=1-3]
	\arrow["{{1\times\omega\times 1}}"', from=1-2, to=2-2]
	\arrow["{{1\times \omega \times 1}}", from=1-3, to=2-3]
	\arrow["{{1\times \Gamma\times 1}}", Rightarrow, from=2-2, to=1-3]
	\arrow["\omega"', from=2-2, to=3-2]
	\arrow["{{1\times T\cong \times 1}}", from=2-3, to=2-2]
	\arrow["\omega", from=2-3, to=3-3]
	\arrow["{{T\sigma}}", from=3-2, to=3-1]
	\arrow["{{T\sigma_i}}", from=3-3, to=3-2]
\end{tikzcd}
}
\end{equation}
 \end{center}
    Thus we have a 2-cell $\omega_\sigma\to\omega_{\sigma '}$ when $\sigma<\sigma '$ in the weak right order.
    
    Notice that our definition gives a 2-cell $\omega_{\sigma}\to \omega_{\sigma\sigma_i}$ even when $\sigma <\sigma\sigma_i$ is false in the weak right order, but we avoid considering these cells since, in the absence of symmetry, $\omega_{\sigma}\to\omega_{\sigma\sigma_i}\to \omega_{\sigma\sigma_i\sigma_i}$ may not be the identity.
    
    By \Cref{yangbaxter2,yangbaxter}, there is functor $\Omega:B_n\to \K(TA_1\times \cdots\times TA_n,T(A_1\times \cdots\times A_n)),$ 
    with $\Omega(1)=\omega,$ $\Omega(\sigma)=\omega_\sigma,$ and such that when $\sigma<\sigma\sigma_i$ in $B_n,$ $\Omega(\sigma<\sigma\sigma_i)$ is the 2-cell \Cref{bruhat}. We believe this to be the coherence theorem that Hyland and Power refer to in \cite{HP02}.
\end{rmk}
To finish proving our coherence theorem, that is, that $T:\K\to \talg$ is pseudo symmetric we need to prove the top and bottom invariance axioms for $T$ in \Cref{pseudosymm}. First we will prove top equivariance for $\sigma_i,$ for which we will need the following.

\begin{notn}In what follows we will use $\overline{A_i}$ to denote $A_{i,1}\times \cdots\times A_{i,k_i}$ and $\overline{A}$ to denote $\overline{A_1}\times\cdots\times \overline{A_n}$ when $k_i$ and $n$ are clear from the context.  We will also use $\overline{A_{>i}}=\overline{A_{i+1}}\times\cdots\times \overline{A_{n}}$ when $n$ is clear for the context, and similarly  $\overline{A_{<i}}=\overline{A_1}\times\cdots \times \overline{A_{i-1}}.$
\end{notn}
\begin{lem}\label{topequivarianceT}
    Let $(T,\eta,\mu,t,\Gamma)$ be a symmetric, pseudo commutative, strong 2-monad. Suppose $k_i\geq 1 $ for $1\leq i \leq n$, and let $A_{i,1},\dots, A_{i,{k_i}}$ for $1\leq i \leq n$ and $C$ be objects of $\K$. The component of the natural transformation $T_{\sigma_i\la \id_{k_{\sigma_i(j)}}\ra_{j=1}^n},$ fitting in the diagram
    \begin{center}
        \adjustbox{scale=0.9}{\begin{tikzcd}
            \K(\overline{A_1}\times \cdots \times \overline{A_n},C)\ar[d,"\sigma_i \la\id_{{k_{\sigma_i(j)}}}\ra_{j=1}^n "']\ar[r,"T"name=D1]&\talg (\la TA_1\ra,\dots,\la TA_n\ra,TC)\ar[d,"\sigma_i \la\id_{{k_{\sigma_i(j)}}}\ra_{j=1}^n "]\\
            |[alias=R1]|\K (\overline{A_{<i}}\times\overline{A_{i+1}}\times\overline{A_i}\times \overline{A_{>i+1}},C)\ar[r,"T"']&\talg (\la\la TA_{j}\ra\ra_{j<i},\la TA_{i+1}\ra,\la TA_i\ra,\la\la TA_j\ra\ra_{j>i+1};C),
            \arrow[Rightarrow,from=R1,to=D1,"T_{\sigma_i \la\id_{{k_{\sigma_i(j)}}}\ra_{j=1}^n }"',shorten <=8mm,shorten >=8mm]
        \end{tikzcd}}
    \end{center}
at $h\colon \overline{A_1}\times \cdots\times \overline{A_n}\to C$ in $\K$ is  
\begin{center}\adjustbox{scale=.9 ,center}{
\begin{tikzcd}
	{\overline{TA_1}\times\cdots\times \overline{TA_{i+1}}\times \overline{TA_i}\times \cdots \times \overline{TA_n}} && {\overline{TA_1}\times\cdots\times \overline{TA_{i+1}} \times \overline{TA_i}\times \cdots \times\overline{TA_n}} \\
	{\overline{TA_1}\times\cdots\times T\left(\overline{A_{i+1}}\right)\times T\left(\overline{A_i}\right)\times \cdots \times \overline{TA_n}} && {\overline{TA_1}\times\cdots\times T\left(\overline{A_i}\right)\times T\left(\overline{A_{i+1}}\right)\times\cdots \times \overline{TA_n}} \\
	{\overline{TA_1}\times\cdots\times T\left(\overline{A_{i+1}}\times\overline{A_i}\right)\times \cdots \times \overline{TA_n}} && {\overline{TA_1}\times\cdots\times T\left(\overline{A_{i+1}}\times\overline{A_i}\right)\times \cdots \times \overline{TA_n}} \\[-10pt]
	{T(\overline{A_1}\times \cdots\times \overline{A_{i+1}}\times\overline{A_i}\times \cdots\times \overline{A_n})} & {T(\overline{A_1}\times\cdots\times \overline{A_n})} & TC.
	\arrow["{1\times \cong\times 1}", from=1-1, to=1-3]
	\arrow["{1\times \omega\times \omega\times 1}"', from=1-1, to=2-1]
	\arrow["{1\times \omega\times \omega\times 1}", from=1-3, to=2-3]
	\arrow["{1\times \cong \times 1}", from=2-1, to=2-3]
	\arrow["{1\times \omega\times 1}"', from=2-1, to=3-1]
	\arrow["{1\times \omega\times 1}", from=2-3, to=3-3]
	\arrow["{1\times \Gamma\times 1}", Rightarrow, from=3-1, to=2-3]
	\arrow["\omega"', from=3-1, to=4-1]
	\arrow["{1\times T(\cong)\times 1}", from=3-3, to=3-1]
	\arrow["{T(1\times \cong\times 1)}"', from=4-1, to=4-2]
	\arrow["h"', from=4-2, to=4-3]
\end{tikzcd}}
\end{center}
\end{lem}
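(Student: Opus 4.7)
The plan is to reduce the block transposition $\sigma_i\la\id_{k_{\sigma_i(j)}}\ra_{j=1}^n$, thought of as an element of $\Sigma_N$ with $N=k_1+\cdots+k_n$, to a product of elementary transpositions $\sigma_s\in\Sigma_N$, and then to invoke the preceding theorem that $T_\sigma$ assembles uniquely from the generators $T_{\sigma_s}$. Concretely, $\sigma_i\la\id\ra$ admits a reduced expression which slides each of the $k_{i+1}$ symbols of block $i+1$ leftward past each of the $k_i$ symbols of block $i$; choosing this word, the component $T_{\sigma_i\la\id\ra;h}$ becomes a pasting of $k_i k_{i+1}$ elementary 2-cells, one for each pair of positions across the two blocks, each of the form produced by \Cref{pseudosymmisos}.

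First, I would apply \Cref{pseudosymmisos} termwise to identify each of these $k_i k_{i+1}$ elementary pieces as an instance of the 2-cell $1\times \Gamma\times 1$ whiskered by the large $\omega_N\colon \la TA_{1,1},\dots,TA_{n,k_n}\ra\to T(\overline{A})$ and followed by $Th$. Since $Th$ is post-composed at the very end of every piece, this step reduces the lemma to an equality of 2-cells in $\K$ with codomain $T(\overline{A})$, namely ``iterated $\Gamma$'s on single coordinates'' versus ``one $\Gamma$ at the block level''.

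Second, using the associativity of $\omega$ from \Cref{omegaass}, I would factor $\omega_N$ as the two-step composite $\omega_n\circ(\omega_{k_1}\times\cdots\times\omega_{k_n})$, inserting $T(\overline{A_i})$ and $T(\overline{A_{i+1}})$ as intermediate $T$-algebras. Under this refactoring, the stack of $k_ik_{i+1}$ single-coordinate pseudo-commutativities can be rewritten, via the Yang--Baxter/Associativity Equation (\Cref{yangbaxter}) and \Cref{newlemma} applied iteratively, as a single pseudo-commutativity between the blocks $T(\overline{A_{i+1}})$ and $T(\overline{A_i})$, that is, as $1\times \Gamma_{\overline{A_{i+1}},\overline{A_i}}\times 1$ whiskered by the block-wise $\omega$'s. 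This is precisely the pasting displayed in the statement of the lemma.

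The main obstacle will be the bookkeeping in the second step: checking that the iterated 2-cell really does collapse, term by term, into the single block-$\Gamma$. This is the part where \Cref{yangbaxter}, combined with the independence-of-partition result \Cref{ij}, does the real work, and it is naturally set up by induction on $k_i+k_{i+1}$. The base case $k_i=k_{i+1}=1$ is just the direct unpacking of \Cref{pseudosymmisos}, and the inductive step peels off one coordinate of block $i+1$ at a time, using associativity of $\omega$ and \Cref{yangbaxter} to migrate the elementary $\Gamma$'s past the remaining $\omega$'s until they aggregate into the block-$\Gamma$.
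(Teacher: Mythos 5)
Your proposal is correct and follows essentially the same route as the paper: the paper also reduces the block transposition to compositions of smaller (eventually elementary) transpositions, handles the base case $k_i=k_{i+1}=1$ via \Cref{pseudosymmisos}, invokes the product permutation axiom to paste the pieces, and uses \Cref{newlemma} (with associativity of $\omega$) to aggregate the coordinate-level $\Gamma$'s into the single block-level $\Gamma$. The only cosmetic difference is the induction scheme — the paper runs a nested double induction (first on $k_{i+1}$ with $k_i=1$, then on $k_i$) rather than a single induction on $k_i+k_{i+1}$, and it does not need to cite \Cref{yangbaxter} separately since \Cref{newlemma} already does that work.
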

\begin{proof}
We prove this by induction on $k_i$ and $k_{i+1}.$ For $k_i=k_{i+1}=1$ this is just \Cref{pseudosymmisos}. Next we induct on $k_{i+1}$ assuming $k_i=1.$ In this case we can write $\sigma_i\la\id_{k_{\sigma_i(j)}}\ra$ as the composition
\begin{center}
\adjustbox{scale=0.9}{
\begin{tikzcd}
\overline{A_1}\times\cdots\times \overline{A_{i-1}}\times A_{i+1,1}\times A_{i+1,2}\times\cdots\times A_{i+1,k_{i+1}}\times A_i\times \overline{A_{i+2}}\times\cdots\times \overline{A_n}\ar[d,"\sigma_{i+1}\la\id_{k_1}{,}\dots {,}\id_{k_{i-1}}{,}\id_1{,}\id_{k_{i+1}-1}{,}\id_1{,}\id_{k_{i+2}}{,}\dots{,}\id_{k_n}\ra"]\\
\overline{A_1}\times\cdots\times \overline{A_{i-1}}\times A_{i+1,1}\times A_i\times A_{i+1,2}\times \cdots\times A_{i+1,k_{i+1}}\times \overline{A_{i+2}}\times \cdots \times \overline{A_n}\ar[d,"\sigma_{1+\sum_{t=1}^{i-1}k_t}"]\\
\overline{A_1}\times \cdots\times \overline{A_{i-1}}\times A_i\times \overline{A_{i+1}}\times \overline{A_{i+2}}\times\cdots\times \overline{A_n}.
\end{tikzcd}}
\end{center}
After applying the inductive hypothesis to  $\sigma_{i+1}\la\id_{k_1}{,}\dots {,}\id_{k_{i-1}}{,}\id_1{,}\id_{k_{i+1}-1}{,}\id_1{,}\id_{k_{i-2}}{,}\dots{,}\id_{k_n}\ra,$
\Cref{pseudosymmisos} to $\sigma_{1+\sum_{t=1}^{i-1}k_t},$ and the product axiom, we get the result for $\sigma_i\la\id_{k_{\sigma_i(j)}}\ra$ by an application of (\ref{newlem10}) in \Cref{newlemma}. By induction, the result holds for any $k_{i+1}$ and $k_i=1.$\\
We  finish by induction on $k_i,$ proving that the result holds for all $k_{i+1}.$ We have proven that this is true for $k_i=1.$ For the inductive step we can write $\sigma_i\la\id_{k_{\sigma_i(j)}}\ra$ as
    \begin{center}
    \adjustbox{scale=0.9}{
    \begin{tikzcd}
        \overline{A_1}\times\cdots\times \overline{A_{i-1}}\times\overline{A_{i+1}}\times A_{i,1}\times \cdots\times A_{i,{k_i}-1}\times A_{i,k_i}\times \overline{A_{i+2}}\times \cdots\times \overline{A_n } \ar[d,"\sigma_i\la\id_{k_1}{,}\dots {,}\id_{k_{i-1}}{,}\id_{k_{i+1}}{,}\id_{{k_i}-1}{,}\id_1{,}\id_{k_{i+2}}{,}\dots{,}\id_{k_n}\ra"]\\
        \overline{A_1}\times \cdots\times \overline{A_{i-1}}\times A_{i,1}\times\cdots\times A_{i,{k_i}-1}\times \overline{A_{i+1}}\times A_{i,{k_i}}\times \overline{A_{i+2}}\times \cdots\times \overline{A_n}\ar[d,"\sigma_{i+1}\la\id_{k_1}{,}\dots{,}\id_{k_{i-1}}{,}\id_{{k_i}-1}{,}\id_{k_{i+1}}{,}\id_1{,}\id_{k_{i+2}}{,}\dots{,}\id_{k_n}\ra"]\\
        \overline{A_1}\times \cdots\times \overline{A_{i}}\times \overline{A_{i+1}}\times\cdots\times \overline{A_i}.
    \end{tikzcd}}
    \end{center}
After applying the inductive hypothesis to
$\sigma_i\la\id_{k_1}{,}\dots {,}\id_{k_{i-1}}{,}\id_{k_{i+1}}{,}\id_{{k_i}-1}{,}\id_1{,}\id_{k_{i+2}}{,}\dots{,}\id_{k_n}\ra,$
the already proven to 
$\sigma_{i+1}\la\id_{k_1}{,}\dots{,}\id_{k_{i-1}}{,}\id_{{k_i}-1}{,}\id_{k_{i+1}}{,}\id_1{,}\id_{k_{i+2}}{,}\dots{,}\id_{k_n}\ra,$
and the product axiom, we get our result by an application of (\ref{newlem9}) in \Cref{newlemma}.
\end{proof}
\begin{lem}\label{topeqT}
Suppose $(T,\eta,\mu,t,\Gamma)$ is a symmetric, pseudo commutative, strong 2-monad. Let $n\geq 2$ and $1\leq i\leq n-1,$ and consider the $\Cat$-multifunctor $T:\K\to \talg$. Then, the top equivariance axiom in \Cref{pseudosymm} holds for $\sigma_i\la \id_{k_{\sigma_i(j)}}\ra_{j=1}^n.$ That is, for every $C\in\ob(\K),$  $\la B \ra =\la B_j\ra_{j=1}^n\in \ob(\K)^n,$ $k_j\geq 0$ for $1\leq j\leq n,$   and $\la A_j\ra=\la A_{j,i}\ra_{i=1}^{k_j}\in \ob(\K)^{k_j}$ for $1\leq j \leq n,$  the pasting diagram
    \begin{center}
    \adjustbox{scale=0.9}{
        \begin{tikzcd}
            \K \left(\overline{B};C\right)\times\prod\limits_j\K\left(\overline{A_i},B_j\right)\ar[rr,"\prod T"]\ar[d,"\gamma"{swap,name=h}]&[-10pt]&[-10pt]\talg(\la TB\ra;TC)\times\prod\limits_j\talg\left(\langle TA_{j}\rangle;TB_j\right)\ar[d,"\gamma"name=g]\\
            
            \K\left(\overline{A},C\right)\ar[d,"\sigma_i\la\id_{k_{\sigma_i(j)}}\ra"swap]\ar[rr,"T"]&&|[alias=N1]|\talg\left(\la\la TA_j\ra\ra;TC\right)\ar[d,"\sigma_i\la\id_{k_{\sigma_i(j)}}\ra"]\\
            |[alias=M1]|\K\left(\overline{A_{<i}}\times \overline{A_{i+1}}\times \overline{A_i}\times\overline{A_{>i+1}},C\right)\ar[rr,"T"swap]&&\talg\left(\la\la TA_j\ra\ra_{j<i},\la TA_{i+1}\ra,\la TA_{i}\ra,\la \la TA_j\ra\ra_{j>i+1};TC\right)
            \arrow[Rightarrow,from=M1, to=N1,shorten=6mm,"T_{\sigma_i\la\id_{k_{\sigma_i(j)}}\ra}"swap]
            \end{tikzcd}}
            \end{center}
            equals the pasting diagram
\begin{center}\adjustbox{scale=.85}{
\begin{tikzcd}
	{\K\left(\overline{B},C\right)\times \prod\limits_j\K\left(\overline{A_j},B_j\right)} &[-10pt] { {\talg(\la TB\ra;TC)\times \prod\limits_j \talg (\la TA_j\ra;TB_j)}} \\
	{\K(B_{<i}\times B_{i+1}\times B_i\times B_{>i+1},C)\times \prod\limits_j\left(\overline{A_{\sigma_i(j)}},B_{\sigma_i(j)}\right)} & {\talg (\la TB\ra \sigma_i;TC)\times \prod\limits_j \talg \left(\la TA_{\sigma_i(j)}\ra;TB_{\sigma_i(j)}\right)} \\
	{\K\left(\overline{A_i}\times \overline{A_{i+1}}\times \overline{A_i}\times \overline{A_{>i+1}},C\right)} & {\talg \left(\la \la TA_j\ra\ra_{j<i} ,\la TA_{i+1}\ra,\la TA_{i}\ra ,\la\la TA_j\ra\ra_{j>i+1};TC\right).}
	\arrow["{{\prod T}}", from=1-1, to=1-2]
	\arrow["{{\sigma_i\times \sigma_i^{-1}}}"', from=1-1, to=2-1]
	\arrow["{{\sigma_i\times \sigma_i^{-1}}}", from=1-2, to=2-2]
	\arrow["{{T_{\sigma_i}\times 1}}"', between={0.2}{0.8}, Rightarrow, from=2-1, to=1-2]
	\arrow["{{\prod T}}"', from=2-1, to=2-2]
	\arrow["\gamma"', from=2-1, to=3-1]
	\arrow["\gamma", from=2-2, to=3-2]
	\arrow["T"', from=3-1, to=3-2]
\end{tikzcd}}
\end{center}
\end{lem}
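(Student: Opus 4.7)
The plan is to evaluate both pasting diagrams at an arbitrary tuple $(h;g_1,\ldots,g_n)$ with $h\in \K(\overline{B},C)$ and $g_j\in \K(\overline{A_j},B_j)$, and show that the resulting 2-cells in $\talg$ agree. Since the free algebra 2-functor $T\colon \K\to\talg$ is a multifunctor by \Cref{maintheoremchapter1}, the upper squares of both pastings commute on the nose, and the task reduces to identifying two explicit 2-cells between the same pair of 1-cells in $\K$.

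First I would expand the first pasting. Its component at this tuple is $T_{\sigma_i\langle\id_{k_{\sigma_i(j)}}\rangle;\, h\circ (g_1\times \cdots\times g_n)}$, and by \Cref{topequivarianceT} this is an explicit pasting built from blockwise $\omega$'s together with an outer $\omega$ and an isomorphism $T\cong$, whose only non-identity 2-cell is a single instance of $1\times \Gamma_{\overline{A_{i+1}},\overline{A_i}}\times 1$, post-composed with $T(h\circ (g_1\times\cdots\times g_n))=Th\circ T(g_1\times\cdots\times g_n)$. Next I would expand the second pasting. Its component is $\gamma(T_{\sigma_i;h};1_{Tg_{\sigma_i(1)}},\ldots,1_{Tg_{\sigma_i(n)}})$, which, by the formula in \Cref{compositiontalg} specialized to identity 2-cells on the $Tg_j$'s, becomes the whiskering of $T_{\sigma_i;h}$ with $\omega_n\circ (Tg_{\sigma_i(1)}\times\cdots\times Tg_{\sigma_i(n)})$ on the source side. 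Expanding $T_{\sigma_i;h}$ via \Cref{pseudosymmisos} presents it as an $\omega$-pasting whose single non-identity 2-cell is $1\times \Gamma_{B_{i+1},B_i}\times 1$, followed by $\omega$, $T\cong$, and $Th$.

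The key step will be to bridge the two presentations by sliding the $Tg_j$'s past the intermediate $\omega$'s and across the $\Gamma$ block. The 2-naturality of $\omega_n$ (\Cref{omeganatural}) rewrites each occurrence $\omega_m\circ (Tg_{m_1}\times \cdots\times Tg_{m_r})=T(g_{m_1}\times\cdots\times g_{m_r})\circ \omega_m$, and the analogous identity holds for $\omega'$ since its constituent $t_1,t_2,\mu$ are all 2-natural. To move $Tg_{i+1}\times Tg_i$ across the $\Gamma$ block itself I would invoke the modification axiom for $\Gamma$ (the pasting equality \Cref{strongdiagram}) applied to $g_{i+1}$ and $g_i$, which delivers
$$T(g_{i+1}\times g_i)\cdot \Gamma_{\overline{A_{i+1}},\overline{A_i}}=\Gamma_{B_{i+1},B_i}\cdot(Tg_{i+1}\times Tg_i),$$
converting $\Gamma_{B_{i+1},B_i}$ in the second pasting into $\Gamma_{\overline{A_{i+1}},\overline{A_i}}$. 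After collecting the $Tg_j$'s past the outermost $\omega$ and using functoriality of $T$ to merge them into $T(g_1\times\cdots\times g_n)$, which then combines with $Th$ into $T(h\circ (g_1\times\cdots\times g_n))$, the result matches the first pasting.

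The main obstacle I anticipate is the bookkeeping: tracking the coherence isomorphisms $\cong$ that rearrange products and the many blockwise $\omega_{k_j}$'s appearing in both pastings, and verifying that after all sliding and rearrangement the single $\Gamma$ appears in the same diagrammatic position on both sides so that the modification axiom applies directly. Modulo this bookkeeping, the proof amounts to one application each of the naturality of $\omega_n$ and the modification property of $\Gamma$.
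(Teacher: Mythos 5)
Your proposal is correct and follows the same overall strategy as the paper: evaluate both pastings at a tuple $(h;g_1,\dots,g_n)$, expand the left-hand pasting via \Cref{topequivarianceT} and the right-hand one via \Cref{pseudosymmisos} together with the formula for $\gamma$ in \Cref{compositiontalg}, and then match the two resulting $\K$-2-cells. The paper's own proof is a one-line citation of \Cref{topequivarianceT}, \Cref{pseudosymmisos}, and part (\ref{newlem10}) of \Cref{newlemma}; you replace the third ingredient by a direct appeal to the modification axiom \Cref{strongdiagram} for $\Gamma$ (applied to $g_{i+1}\colon \overline{A_{i+1}}\to B_{i+1}$ and $g_i\colon \overline{A_i}\to B_i$), combined with 2-naturality of the $\omega_m$ (\Cref{omeganatural}). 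This is legitimate and arguably more direct, since after this step both sides carry the single 2-cell $1\times\Gamma_{\overline{A_{i+1}},\overline{A_i}}\times 1$ in the same position and no decomposition into iterated elementary $\Gamma$'s is required. Two small points to tighten: first, it is not the \emph{upper} squares of both pastings that commute strictly --- in the second pasting the upper square carries the 2-cell $T_{\sigma_i}\times 1$ and it is the \emph{lower} ($\gamma$) square that commutes by \Cref{maintheoremchapter1}; your subsequent identification of the component as $\gamma(T_{\sigma_i;h};1,\dots,1)$ shows you mean the right thing, but the sentence should be corrected. Second, the ``bookkeeping'' step of regrouping the outer $\omega$'s (e.g.\ identifying $\omega_3\circ(\omega\times 1\times\omega)\circ(\prod\omega_{k_j}\times\cdots)$ with the mixed $\omega$ appearing in \Cref{topequivarianceT}) is exactly an application of \Cref{omegaass}, and it is worth citing that lemma explicitly rather than leaving it implicit.
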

\begin{proof}
The lemma follows at once from \Cref{topequivarianceT}, \Cref{pseudosymmisos},  and (\ref{newlem10}) in \Cref{newlemma}.
\end{proof}
\begin{lem}\label{bottomeqT}
Suppose $(T,\eta,\mu,t,\Gamma)$ is a symmetric, pseudo commutative, strong 2-monad. Let $n\geq 1$ and $1\leq i\leq n-1,$ and consider the $\Cat$-multifunctor $T:\K\to \talg$. Then, the bottom equivariance axiom in \Cref{pseudosymm} holds for $\id_n\la\id_{k_1},\dots,\sigma_i,\dots,\id_{k_n}\ra$ that is, For every $C\in\ob(\K),$ $\la B \ra =\la B_j\ra_{j=1}^n\in \ob(K)^n,$ $k_j\geq 0$ for $1\leq j\leq n,$   and $\la A_j\ra=\la A_{j,l}\ra_{l=1}^{k_j}\in \ob(\K)^{k_j}$ for $1\leq j \leq n,$ the pasting diagram
\begin{center}
\adjustbox{scale=0.9}{
    \begin{tikzcd}
        \K\left(\overline{B},C\right)\times \prod\limits_j \K\left(\overline{A_j},B_j\right)\ar[d,"\gamma"']\ar[rr,"\prod T"]&[-20pt]&[-20pt]\talg\left(\la TB\ra;TC\right)\times \prod\limits_j \talg\left(\la TA_j\ra;TB_j\right)\ar[d,"\gamma"]\\
\K\left(\overline{A},C\right)\ar[rr,"T"name=R1]\ar[d,"\id_n\la \id_{k_1}{,}\dots{,}\sigma_i{,}\dots{,}\id_{k_n}\ra"']&&|[alias=R2]|\talg\left(\la\la TA_j\ra\ra;TC\right)\ar[d,"\id_n\la \id_{k_1}{,}\dots{,}\sigma_i{,}\dots{,}\id_{k_n}\ra"]\\[10pt]
    |[alias=D1]|\K\left(\overline{A_{<i}}\times \prod\limits_j A_{i,\sigma_i(j)}\times \overline{A_{>i}},C\right)\ar[rr,"T"{swap,name=D2}]&&\talg\left(\la \la TA_j\ra\ra_{j<i},\la TA_{i,\sigma_i(j)}\ra_j,\la \la TA_j\ra\ra_{j>i} ;TC\right)
    \arrow[Rightarrow,from=D1,to=R2," T_{\id_n\la \id_{k_1}{,}\dots{,}\sigma_i{,}\dots{,}\id_{k_n}\ra}"',near start,shorten <=8mm,shorten >=8mm]
    \end{tikzcd}}
\end{center}
is equal to the pasting
\begin{center}\adjustbox{scale=.7,center}{\begin{tikzcd}[column sep=-2in]
	{\K\left(\overline{B},C\right)\times \prod\limits_j\K\left(\overline{A_j},B_j\right)} & {\talg(\la TB\ra ;TC)\times \prod\limits_j \left(\la TA_j\ra ;B_j\right)} \\
	{\K\left(\overline{B},C\right)\times\prod\limits_{j<i}\K\left(\overline{A_j},B_j\right)\times \K\left(\prod\limits_j A_{i,\sigma_i(j)},B_i\right)\times \prod\limits_{j>i}\K\left(\overline{A_j},B_j\right)} \\
	& {\talg (\la TB\ra ;TC)\times \prod\limits_{j<i}\talg (\la TA_j\ra ;B_j)\times \talg\left(\la TA_{i,\sigma_i(j)}\ra ;TB_i\right)\times \prod\limits_{j>i}\talg(\la TA_j\ra ;TB_j)} \\
	{\K\left(\overline{A_{<i}}\times \prod\limits_j A_{i,\sigma_i(j)}\times \overline{A_{>i}},C\right)} & {\talg\left( \la TA_j\ra_{j<i},\la TA_{i,\sigma_i(j)}\ra,\la TA\ra_{j>i}\ra ;C\right).}
	\arrow["{\prod T}", from=1-1, to=1-2]
	\arrow["{\id\times \id_{k_1}\times\cdots\times\sigma_i\times \cdots\times \id_{k_n}}"', from=1-1, to=2-1]
	\arrow["{\id\times \id_{k_1}\times\cdots\times \sigma_i\times\cdots\times \id_{k_n}}", from=1-2, to=3-2]
	\arrow["{1\times T_{\sigma_i}\times 1}"',between={0.2}{0.8}, Rightarrow, from=2-1, to=1-2]
	\arrow["{\prod T}"', from=2-1, to=3-2]
	\arrow["\gamma"', from=2-1, to=4-1]
	\arrow["\gamma", from=3-2, to=4-2]
	\arrow["T"', from=4-1, to=4-2]
\end{tikzcd}}
\end{center}
\end{lem}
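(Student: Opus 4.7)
The plan is to proceed in direct analogy with the proof of top equivariance (\Cref{topeqT}), exploiting the fact that the permutation here takes place entirely within the $i$-th block $\overline{A_i}$ and does not touch the outer block structure indexed by $j=1,\ldots,n$. I would begin by establishing a bottom-equivariance analogue of \Cref{topequivarianceT}: for any $h\colon \overline{A}\to C$ in $\K$, I compute the component of the natural transformation $T_{\id_n\la\id_{k_1},\dots,\sigma,\dots,\id_{k_n}\ra}$ at $h$. Using the recursive definition $\omega_n=\gamma(\omega_2;\omega_{n-1},\omega_1)$ together with \Cref{omeganatural}, this component equals the whiskering that inserts the $\Gamma_{A_{i,\ell},A_{i,\ell+1}}$-type 2-cell into the $i$-th tensor factor alone, leaving the other factors untouched. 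The verification reduces, by induction on $n$ and on the remaining $k_j$'s, to the two-variable case, which is \Cref{pseudosymmisos} for $\omega_{k_i}$ rewritten via \Cref{omega'}.

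Next, I would unpack both pastings displayed in the statement using \Cref{compositiontalg,pseudosymmisos}. On the left, I first $\gamma$-compose $Tf$ with $Tg_1,\dots,Tg_n$ in $\talg$; by naturality of $\omega$ (\Cref{omeganatural}(1)) and associativity of $\omega$ (\Cref{omegaass}), the 1-cell component becomes $T(f\circ \prod_j g_j)\circ \omega_{\sum_j k_j}$, and I then apply the 2-cell computed in the previous step, which sits entirely in the $i$-th tensor factor. On the right, I first whisker $T_{\sigma;g_i}$ (a 2-cell living in the $i$-th slot) with the remaining $Tg_j$'s and $Tf$ via $\gamma$-composition. Since $\gamma$-composition of 2-cells is given by horizontal composition in $\K$ (\Cref{compositiontalg}), the two pastings coincide by a single application of the exchange rule in the 2-category $\K$ together with the modification property of $\Gamma$.

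The main obstacle should be purely notational, amounting to carefully bookkeeping the inductive reduction to the two-variable case and the whiskering pattern. Unlike the top-equivariance proof, however, no Yang--Baxter-type identity from \Cref{newlemma} or \Cref{yangbaxter} is needed: because the permutation is interior to a single block, the associated $\Gamma$-2-cell slides past the outer $\omega_n$ by naturality alone, and never interacts with any of the other $\omega$'s through the pseudo commutativity modification. The proof should therefore conclude as briefly as that of \Cref{topeqT}, citing \Cref{omeganatural}, \Cref{omegaass}, and the bottom-equivariance analogue of \Cref{topequivarianceT}.
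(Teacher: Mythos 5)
Your proposal is correct, and it reaches the conclusion by a route that is genuinely leaner than the one the paper records. The paper's own proof is a one-line citation of \Cref{pseudosymmisos} together with part (\ref{newlem9}) of \Cref{newlemma}; you argue instead that no identity from \Cref{newlemma} is needed, and I believe you are right. The decisive observation is the one you make: $\id_n\la\id_{k_1},\dots,\sigma_i,\dots,\id_{k_n}\ra$ is itself an adjacent transposition of the flattened input list, so by \Cref{pseudosymmisos} both pastings are post-whiskerings of the \emph{identical} 2-cell $1\times\Gamma_{A_{i,m+1},A_{i,m}}\times 1$ — on the left whiskered through the three-block decomposition of $\omega_N$ used in \Cref{pseudosymmisos}, on the right whiskered first through $\omega_{k_i}$ and $Tg_i$ and then through $\omega_n$ and $Tf$ via the $\gamma$-composition of \Cref{compositiontalg}. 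Since post-whiskering a fixed 2-cell depends only on the 1-cell one whiskers with, the equality reduces to an equality of 1-cells, which holds by re-bracketing the iterated $\omega$ (\Cref{omegaass}) and sliding the $Tg_j$'s past $\omega_n$ (\Cref{omeganatural}(1)); the composite-of-$\Gamma$'s-equals-block-$\Gamma$ content of \Cref{newlemma} never enters because, unlike in \Cref{topeqT}, there is no product of transpositions to collapse. What the paper's citation of (\ref{newlem9}) buys is a uniform treatment parallel to \Cref{topeqT}; what your route buys is the recognition that bottom equivariance for a generator is purely a bookkeeping statement about bracketings. Two small inaccuracies in your write-up, neither fatal: the "modification property of $\Gamma$" and \Cref{omega'} are not really doing any work here (nothing ever has to slide past $\Gamma$ itself, since the $g_j$'s are applied only after all the $\omega$'s), and "slides past the outer $\omega_n$ by naturality" should more precisely be "is absorbed by whiskering functoriality together with \Cref{omegaass}" — \Cref{omeganatural}(2) does not apply to $\Gamma$, which is not of the form $T\alpha$. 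Also, your proposed "bottom-equivariance analogue of \Cref{topequivarianceT}" with a double induction is harmless but unnecessary for the same reason: the relevant permutation is already a generator of $\Sigma_{\sum k_j}$, so its pseudo-symmetry 2-cell is given directly by \Cref{pseudosymmisos} rather than as a composite.
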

\begin{proof}
The lemma follows at once from \Cref{pseudosymmisos},  and (\ref{newlem9}) in \Cref{newlemma}.
\end{proof}
Finally we arrive at the proof of our main theorem.
\begin{thm}\label{maintheoremchapter2}
    Suppose $(T,\eta,\mu,t,\Gamma)$ is a symmetric, pseudo commutative, strong 2-monad. Then, the free algebra $\Cat$-multifunctor $T\colon \K\to \talg$ is pseudo symmetric.
\end{thm}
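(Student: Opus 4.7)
The plan is to assemble the results of this section to verify each axiom in the definition of a pseudo symmetric $\Cat$-multifunctor recalled in \Cref{pseudosymm}. The pseudo symmetry isomorphisms $T_\sigma$ have been built in \Cref{pseudosymmisos} for consecutive transpositions $\sigma_i$, and the theorem immediately preceding \Cref{hpcoherence} already assembles these into a coherent family $\{T_\sigma\}_{\sigma\in\Sigma_n}$ satisfying the unit axiom and the product permutation axiom. What remains is to verify top and bottom equivariance for arbitrary symmetric group elements and block permutations, since \Cref{topeqT} and \Cref{bottomeqT} establish these only for the generating cases $\sigma_i\la\id_{k_{\sigma_i(j)}}\ra$ and $\id_n\la\id,\dots,\sigma_i,\dots,\id\ra$, respectively.

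For top equivariance, I would proceed by induction on the length of a reduced expression $\sigma=\sigma_{i_1}\cdots\sigma_{i_m}$ for a permutation $\sigma\in\Sigma_n$. The base case $m=0$ is immediate from the unit axiom. For the inductive step, the product permutation axiom lets us decompose $T_{\sigma\la\id_{k_{\sigma(j)}}\ra}$ as a composite of $T_{\sigma_{i_m}\la\id\ra}$ and $T_{\sigma'\la\id\ra}$ with $\sigma'=\sigma_{i_1}\cdots\sigma_{i_{m-1}}$ acting on the appropriately reindexed source. Combining \Cref{topeqT} applied to the generator factor with the inductive hypothesis applied to the shorter factor, and using the naturality and associativity of multicategorical $\gamma$-composition, yields the required equality of pasting diagrams.

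Bottom equivariance is handled by an analogous induction: an arbitrary tuple $\la\tau_1,\ldots,\tau_n\ra$ with $\tau_j\in\Sigma_{k_j}$ factors, via the product permutation axiom, into a composite of elementary block permutations $\id_n\la\id,\dots,\sigma_{i_l},\dots,\id\ra$, each of which is covered by \Cref{bottomeqT}. The main obstacle, which is formal rather than conceptual, will be unpacking the product permutation axiom in terms of the precise equivariance pasting diagrams: one must check that composing the generator-level equivariance diagrams across the induction step reproduces the full equivariance diagram for the composed permutation, and that the coherences introduced by $\Gamma$ (via $\omega$ and $\omega'$) line up correctly with those already present. Once this bookkeeping is carried out, the theorem follows directly from \Cref{topeqT}, \Cref{bottomeqT}, and the previously established assembly of the $T_{\sigma_i}$ into the family $\{T_\sigma\}_{\sigma\in\Sigma_n}$.
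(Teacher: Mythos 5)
Your proposal is correct and follows essentially the same route as the paper: both reduce top and bottom equivariance to the generator cases of \Cref{topeqT} and \Cref{bottomeqT} by repeatedly invoking the product permutation axiom (your explicit induction on reduced-word length is just the paper's "if it holds for $\sigma\la\cdots\ra$ and $\tau\la\cdots\ra$ then it holds for $\sigma\tau\la\cdots\ra$" made formal), with the unit and product permutation axioms themselves supplied by the preceding assembly theorem.
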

\begin{proof}
We just need to prove that the bottom and top equivariance axioms hold for $T.$ For the top equivariance axiom we notice that given $\sigma,\tau\in \Sigma_n$, and $k_1,\dots,k_n,$ we can write $\sigma\tau\la\id_{k_{\sigma\tau(1)}},\dots,\id_{k_{\sigma\tau(n)}}\ra$ as the composition
\begin{center}
    \begin{tikzcd}[column sep=0.5in]
        \overline{A_{\sigma\tau(1)}}\times\cdots\times \overline{A_{\sigma\tau(n)}}\ar[r,"\tau\la\id_{k_{\sigma\tau(i)}}\ra"]&
        \overline{A_{\sigma(1)}}\times\cdots\times \overline{A_{\sigma(n)}}\ar[r,"\sigma\la\id_{k_{\sigma(i)}}\ra"]&
        \overline{A_1}\times\cdots\times \overline{A_n}.
    \end{tikzcd}
\end{center}
By an application of the product axiom, if $\sigma\la \id_{k_{\sigma(i)}}\ra$ and $\tau\la {\id_{k_{\sigma\tau(i)}}}\ra$ satisfy the top invariance axiom, then so does $\sigma\tau\la\id_{k_{\sigma\tau(1)}},\dots,\id_{k_{\sigma\tau(n)}}\ra.$ We are done by Lemma \Cref{topeqT}.

Similarly, for the bottom equivariance axiom. Given $n,$ $k_1,\dots k_n$ and $\sigma ,\tau \in\Sigma_{k_i}.$ If the bottom equivariance axiom holds for $\id_n\la \id_{k_1},\dots,\tau,\dots\id_{k_n}\ra,$ and $\id_n\la \id_{k_1},\dots,\sigma,\dots,\id_{k_n}\ra,$ then it also holds for $\id_n \la\id_{k_1},\dots,\sigma\tau ,\dots,\id_{k_n}\ra$ by  the product axiom. By \Cref{bottomeqT}, we get the bottom equivariance axiom for $\id_n\la \id_{k_1},\dots,\sigma ,\dots , \id_{k_n}\ra$ for any $\sigma\in \Sigma_{k_i}.$ On the other hand, if the bottom equivariance axiom holds for $\id_n\la \sigma_1,\dots,\sigma_n\ra,$ and $\id_n\la\tau_1,\dots,\tau_n\ra,$ where $\sigma_i,\tau_i\in \Sigma_{k_i},$ then it also holds for $\id_n\la \sigma_1\tau_1,\dots,\sigma_n\tau_n\ra$ by another application of the product axiom. Thus $T$ saatisfies the bottom equivariance axiom.
\end{proof}
Since the free functor associated to a pseudo commutative operad is a symmetric, pseudo commutative strong 2-monad, the free functors of the pseudo commutative operads defined in \cite{CG13,GMMO21} and considered as well in \cite{Y24} are pseudo symmetric.
\begin{rmk}
We can get a version of \Cref{hpcoherence} for the case were $(T,\eta,\mu,t,\Gamma)$ is a symmetric, pseudo commutative, strong 2-monad. In this case, the Bruhat order is replaced by $E\Sigma_n ,$ i.e., the category with objects $\Sigma_n$ and a unique morphism between each pair of objects. We notice that the $E\Sigma_n$'s assemble to give a $\Cat$-operad known as the Barrat-Eccles operad. That is, there is a multicategory $E\Sigma_*$ with a single object and such that the $n$-multilinear maps are given by $$E\Sigma_*(\underbrace{*,\dots, *}_{n\text{ times}};*)=E\Sigma_n,$$
and the composition is defined for $\sigma\in \Sigma_n$ and $\tau_i\in \Sigma_{k_i}$ as $\gamma(\sigma;\tau_1,\dots, \tau_n)=\sigma\la\tau_1,\dots ,\tau_n\ra,$(see \Cref{gam}). The composition for 2-cells is forced by the previous. By the coherence theorem for pseudo symmetric multifunctors \cite{M23} we can rigidify the multifunctor $T\colon \K\to \talg$ and turn it into a symmetric multifunctor $\phi(T)\colon \K\times E\Sigma_*\to \talg .$ For objects, $\phi(T)(A,*)=\phi(T)(A)=TA,$ for 1-cells $\phi(T)(f,\sigma)=T(f\sigma^{-1})\sigma,$ for 2-cells $\phi(T)(\alpha,1_\sigma)=T(\alpha\sigma^{-1})\sigma,$ and also $\phi(T)(1_f,\sigma\to \tau)=(T_{\tau\sigma^{-1};f\sigma^{-1}})\sigma.$ Let's focus on the 1-cell $1_{A_1\times\cdots\times A_n}.$ It's easy to check that for $\omega_\sigma$ defined in \Cref{hpcoherence}, we have that $\omega_\sigma=\phi(T)(1_{A_1\times \cdots\times A_n},\sigma^{-1}).$ Thus, we get a map
$$E\Sigma_n\to \talg (TA_1,\dots, TA_n;T(A_1\times \cdots\times A_n))$$
which sends $\sigma$ to $\phi(T)(1_{A_1\times \cdots\times A_n},\sigma^{-1})=\omega_\sigma,$ and $\sigma\to\tau$ to $\phi(T)\left(1_{A_1\times \cdots\times A_n},\sigma^{-1}\to \tau^{-1}\right).$
\end{rmk}
\appendix
\section{}\label{appendix}
Here are the definitions of $C$-multicategory, (symmetric) $C$-multifunctor, and pseudo symmetric $\Cat$-multifunctor. 
\begin{notn}\label{gam} If $\sigma \in \Sigma_n$ and $\tau_i\in \Sigma_{k_{i}}$ for $1\leq i \leq n,$ we will denote by $\sigma\langle \tau_1,\dots, \tau_n\rangle\in\Sigma_{k_1+\cdots+k_n}$ the permutation that swaps $n$ blocks of lengths $k_1,\dots, k_n$ according to $\sigma$ and each block of length $k_i$ according to $\tau_i.$  
\end{notn}
\begin{defi}\label{multicat}
    If $(C,\otimes ,1,\rho ,\lambda )$ is a symmetric monoidal category, a $C$-\textit{multicategory} $(\M,\gamma,1)$ consists of the following data:
    \begin{itemize}
        \item A class of objects $\ob(\M).$
        \item For every $n\geq 0,$ $\la a\ra=\la a_i\ra_{i=1}^n\in\ob(\M)^n$  and $b\in \ob(\M),$ an object in $C$ denoted by
        $$\M(\la a\ra;b)=\M(a_1,\dots,a_n;b).$$
        We will write $\la a\ra$ instead of $\la a_i\ra_{i=1}^n$ when $n$ is clear from the context or irrelevant. [In the case $C=\Cat,$ an object $f$ of $\M(\la a\ra;b)$ will be called an $n$-ary 1-cell with input $\la a\rangle$ and output $b$ and will be denoted as $f\colon \la a \ra \to b.$ Similarly, we will call $\alpha\colon f\to g$ in $\M(\la a\ra;b)(f,g)$ an $n$-ary 2-cell.]
        \item For each $n\geq 0,$  $\la a\ra\in\ob(\M)^n,$ $b\in\ob(\M),$ and $\sigma\in \Sigma_n,$ a $C$-isomorphism
        \begin{center}
            \begin{tikzcd}
                \M(\la a\ra;b)\ar[r,"\sigma","\cong"swap]&\M(\la a\ra \sigma;b)
            \end{tikzcd}
        \end{center}
        called the right $\sigma$ action or the symmetric group action. Here
        $$\la a\ra\sigma=\la a_1,\dots,a_n\ra \sigma=\la a_{\sigma(1)},\dots,a_{\sigma(n)}\ra.$$ [In the case $C=\Cat$ we write $f\sigma$ for the image of an $n$-ary 1-cell $f\colon \la a\ra \to b$ in $\M$ and similarly for 2-cells.]
        \item For each object $a\in\ob(\M),$ a morphism
        \begin{center}
            \begin{tikzcd}
                1\ar[r,"1_a"]&\M(a;a)
            \end{tikzcd}
        \end{center}
        called the $a$-unit. In the case $C=\Cat$ we notice that  if $a\in\ob(\M),$ $1_a\colon a\to a$ is a 1-ary 1-cell while if $f\colon \la a\ra\to b$ is an $n$-ary 1-cell, then $1_f\colon f\to f$ is an $n$-ary 2-cell in $\M(\la a\ra;b)(f,f)$ so our notation is unambiguous.
        \item For every $c\in\ob(\M),$ $n\geq 0,$ $\la b \ra =\la b_j\ra_{j=1}^n\in \ob(M)^n,$ $k_j\geq 0$ for $1\leq j\leq n,$   and $\la a_j\ra=\la a_{j,i}\ra_{i=1}^{k_j}\in \ob(\M)^{k_j}$ for $1\leq j \leq n,$   a morphism in $C,$
        \begin{center}
            \begin{tikzcd}
                \M(\la b\ra;c)\otimes \bigotimes\limits_{j=1}^n\M(\la a_j\ra;b_j)\ar[r,"\gamma"]&\M(\la a\ra;c),
            \end{tikzcd}
        \end{center}
        where we adopt the convention that $\la a\ra \in \ob(\M)^k,$ where $k=\sum_{i=1}^n k_j,$ denotes the concatenation of the varying $a_j$'s for $j=1,\dots,n$. That is,
        $$\la a\ra =\la a_1,\dots,a_n\ra=\la\la a_j\ra\ra_{j=1}^n= \la a_{1,1},\dots, a_{1,k_1},a_{2,1,}\dots,a_{n-1,k_{n-1}}a_{n,1},\dots,a_{n,k_n}\ra.$$
    \end{itemize}
    The previous data are required to satisfy the following axioms.
    \begin{itemize}
        \item \textbf{Symmetric group action}: For every $n\geq 0,$ $\la a\ra\in \ob(\M),$  $b\in\ob(\M),$ and $\sigma,\tau$ in  $\Sigma_n$ the following diagram commutes in $C:$
        \begin{center}
\begin{tikzcd}
	{\M(\la a\ra ;b)} & {\M(\la a\sigma\ra ;b)} & {\M(\la a\sigma\tau\ra ;b).}
	\arrow["\sigma", from=1-1, to=1-2]
	\arrow["{\sigma\tau}"', curve={height=12pt}, from=1-1, to=1-3]
	\arrow["\tau", from=1-2, to=1-3]
\end{tikzcd}
        \end{center}
        \item \textbf{Identity:} the identity permutation $\id_n\in\Sigma_n$  act as the identity morphism on $\M(\la a\ra;b).$
        \item \textbf{Associativity:} For every $d\in\ob(\M),$ $  n\geq 1,$ $\la c\ra=\la c_j\ra_{j=1}^n\in\ob(\M)^n,$ $k_j\geq 0$ for $1\leq j\leq n$ with $k_j\geq 1$ for at least one $j,$ $\la b_j\ra=\la b_{j,i}\ra_{i=1}^{k_j}\in\ob(\M)^{k_j}$ for $1\leq j\leq n,$ $l_{i,j}\geq 0$ for $1\leq j\leq n $ and $1\leq i\leq k_j,$   and $\la a_{j,i}\ra=\langle a_{j,i,p}\rangle_{p=1}^{l_{i,j}}\in\ob(\M)^{l_{i,j}}$ for $1\leq j \leq n$ and $1\leq i\leq k_j,$ the following \textit{associativity diagram} commutes in $C$:
        \begin{center}
          \adjustbox{scale=0.75}{
\begin{tikzcd}
	{\M (\la c\ra;d)\otimes\left(\bigotimes\limits_{j=1}^n \M(\la b_j\ra;c_j)\right)\otimes \bigotimes\limits_{j=1}^n\left(\bigotimes\limits_{i=1}^{k_j}\M (\la a_{j,i}\ra ;b_{j,i})\right)} & {\M(\la \la b_j\ra\ra_{j=1}^n,d)\otimes\bigotimes\limits_{j=1}^n\left(\bigotimes\limits_{i=1}^{k_j}\M (\la a_{j,i}\ra ;b_{j,i})\right)} \\
	{\M (\la c\ra;d)\otimes \bigotimes\limits_{j=1}^n\left(\M(\la b_j;c_j\ra)\bigotimes\limits_{i=1}^{k_j}\M (\la a_{j,i}\ra ;b_{j,i})\right)} \\
	{\M (\la c\ra;d)\otimes \bigotimes\limits_{j=1}^n \M(\la a_j\ra ;c_j)} & {\M (\la a\ra ;b).}
	\arrow["{\gamma\times1}", from=1-1, to=1-2]
	\arrow["\cong"', from=1-1, to=2-1]
	\arrow["\gamma", from=1-2, to=3-2]
	\arrow["{1\otimes \bigotimes_{j=1}^n\gamma}"', from=2-1, to=3-1]
	\arrow["\gamma"', from=3-1, to=3-2]
\end{tikzcd}}
        \end{center}
        \item \textbf{Unity:} Suppose $b\in \ob(\M)$ and $\la a\ra=\la a_j\ra_{j=1}^n\in\ob(\M)^n,$ then the diagrams
        \begin{center}
\adjustbox{scale=0.85}{
\begin{tikzcd}
	{\M (\la a\ra ;b)\otimes \bigotimes\limits_{j=1}^n1} &&[-20pt]&[-20pt] {1\otimes\M (\la a\ra ;b)} \\
	{\M (\la a\ra ;b)\otimes \bigotimes\limits_{j=1}^n\M (a_j;a_j)} & {\M(\la a\ra ;b),} & {\text{and}} & {\M(b;b)\otimes \M(\la a\ra;b)} & {\M( \la a\ra ;b)}
	\arrow["{1\otimes \bigotimes_{j=1}^n1_{a_j}}"', from=1-1, to=2-1]
	\arrow["\rho\circ\dots\circ\rho", from=1-1, to=2-2]
	\arrow["{1_b\otimes 1}"', from=1-4, to=2-4]
	\arrow["\lambda", from=1-4, to=2-5]
	\arrow["\gamma"', from=2-1, to=2-2]
	\arrow["\gamma"', from=2-4, to=2-5]
\end{tikzcd}}
            \end{center}
        commute.
        \item \textbf{Top equivariance:} For every $c\in\ob(\M),$ $n\geq 1,$ $\la b \ra =\la b_j\ra_{j=1}^n\in\ob(\M)^n,$ $k_j\geq 0$ for $1\leq j\leq n,$ $\la a_j\ra=\la a_{j,i}\ra_{i=1}^{k_j}\in \ob(\M)^{k_j}$ for $1\leq j \leq n,$ and $\sigma\in \Sigma_n,$ the following diagram commutes:
        \begin{center}
            \begin{tikzcd}
                \M(\la b\ra;c)\otimes\bigotimes\limits_{j=1}^n\M(\la a_j\ra;b_j)\ar[r,"\sigma\otimes \sigma^{-1}"]\ar[d,"\gamma"swap]&[20pt] \M(\la b\ra\sigma;c)\otimes\bigotimes\limits_{j=1}^n\M(\la a_{\sigma(j)}\ra;b_{\sigma(j)})\ar[d,"\gamma"]\\
                \M(\la a_1\ra,\dots,\la a_n\ra;c)\ar[r,"\sigma\bigl\langle\id_{k_{\sigma(1)}}{,}\dots{,}\id_{k_{\sigma(n)}}\bigr\rangle"swap]&\M(\la a_{\sigma(1)}\ra,\dots,\la a_{\sigma(n)}\ra;c).
            \end{tikzcd}
            \end{center}
            Here $\sigma^{-1}$ is the unique isomorphism in $C,$ given by the coherence theorem for symmetric monoidal categories, that permutes the factors $\M(\la a_j\ra,b_j)$ according to $\sigma^{-1}.$
        \item \textbf{Bottom equivariance:} For $\la a_j\ra,\la b\ra$ and $c$ as in Top equivariance, the following diagram commutes:
        \begin{center}
            \begin{tikzcd}
                \M(\la b\ra;c)\otimes\bigotimes\limits_{j=1}^n\M(\la a_j\ra;b_j)\ar[r,"\id\otimes \bigotimes\limits_{j=1}^n\tau_j"]\ar[d,"\gamma"swap]&[20pt] \M(\la b\ra,c)\otimes\bigotimes\limits_{j=1}^n\M(\la a_j\ra\tau_j;b_j)\ar[d,"\gamma"]\\
                \M(\la a_1\ra,\dots,\la a_n\ra;c)\ar[r,"\id_n\bigl\langle\tau_1{,}\dots{,}\tau_n\bigr\rangle"swap]&\M(\la a_1\ra\tau_1,\dots,\la a_n\ra\tau_n;c).
            \end{tikzcd}
            \end{center}
    \end{itemize}
    This concludes the definition of a $C$-multicategory.
\end{defi}
\begin{defi}\label{multifunc}
    A symmetric $C$-multifunctor $F\colon \M\to\Nn$ between $C$-multicategories $\M$ and $\Nn$ consists of the following data:
    \begin{itemize}
        \item An object assignment $F\colon \ob(\M)\to\ob(\Nn)$.
        \item For each $n\geq 0,$ $\la a\ra\in\ob(\M)^n$ and $b\in\ob(\M)$ a $C$ morphism
        \begin{center}
            \begin{tikzcd}
                \M(\la a\ra;b)\ar[r,"F"]&\Nn(\la Fa\ra;Fb).
            \end{tikzcd}
        \end{center}
    \end{itemize}
    These data are required to preserve units, composition, and the action of the symmetric group.
    \begin{itemize}
        \item\textbf{Units:} For each object $a\in\ob(\M),$ 
        $F(1_a)=1_{Fa},$ i.e., the following diagram commutes in $C:$
        \begin{center}
\begin{tikzcd}
	1 & {\M (a,a)} & {\Nn (Fa,Fa).}
	\arrow["{1_a}", from=1-1, to=1-2]
	\arrow["{1_{Fa}}"', curve={height=12pt}, from=1-1, to=1-3]
	\arrow["F", from=1-2, to=1-3]
\end{tikzcd}
        \end{center}
        \item \textbf{Composition:} For every $c\in\ob(\M),$ $n\geq 0,$ $\la b \ra =\la b_j\ra_{j=1}^n\in \ob(M)^n,$ $k_j\geq 0$ for $1\leq j\leq n,$   and $\la a_j\ra=\la a_{j,i}\ra_{i=1}^{k_j}\in \ob(\M)^{k_j}$ for $1\leq j \leq n$ and $1\leq i \leq n,$ the following diagram commutes in $C:$
        \begin{center}
                \begin{tikzcd}
                    \M (\la b\ra;c)\otimes\bigotimes\limits_{j=1}^n\M(\langle a_j\rangle;b_j)\ar[r,"F\otimes\bigotimes\limits_{j=1}^n F"]\ar[d,"\gamma"{swap,name=h}]&[5pt]\Nn(\la Fb\ra;Fc)\otimes\bigotimes\limits_{j=1}^n\Nn(\langle Fa_j\rangle;Fb_j)\ar[d,"\gamma"name=g]\\
            \M(\la a\ra;c)\ar[r,"F"swap]&|[alias=N1]|\Nn(\la Fa\ra;Fc).
                \end{tikzcd}
        \end{center}
        \item\textbf{Symmetric Group Action:} For each $\la a \ra \in\ob(\M)^n$ and $b\in\ob(\M)$ the following diagram commutes in $C:$
        \begin{center}
                \begin{tikzcd}
                    \M(\la a\ra;b)\ar[r,"F"]\ar[d,"\sigma","\cong"swap]&\Nn(\la Fa\ra;Fb)\ar[d,"\sigma","\cong"swap]\\
                    \M(\la a\ra\sigma;b)\ar[r,"F"swap]&\N(\la Fa\ra\sigma;Fb).
                \end{tikzcd}
        \end{center}
    \end{itemize}
    \end{defi}
    Yau defines, in \cite{Y23}, a 2-category, $C$-$\mathbf{Multicat}$  with 0-cells $C$-multicategories, 1-cells symmetric $C$-multifunctors, and 2-cells $C$-multinatural transformations.
 \begin{defi}{\cite[Def. 4.1.1]{Y23}}\label{pseudosymm} Suppose that $\mathcal{M},\mathcal{N}$ are $\Cat$-multicategories. A  \textit{pseudo symmetric $\Cat$-multifunctor} $F\colon \mathcal{M}\to \mathcal{N}$ consists of the following data:
\begin{itemize}
    \item A  function on object sets $F\colon \ob(\mathcal{M})\to\ob(\mathcal{N}).$
    \item For each $\la a\ra\in \ob(\mathcal{M})^n$ and $b\in\ob(\mathcal{M}),$ a component functor
    \begin{center}
\begin{tabular}{c}
    \xymatrix{\mathcal{M}(\langle a\rangle;b)\ar[r]^-{F}&\mathcal{N}(\langle Fa\rangle;Fb).}
\end{tabular}
\end{center}
\item For each $\sigma\in \Sigma_n,$ $\langle a\rangle\in\ob(\M)^n,$ $b\in \ob(\M),$  a natural isomorphism $F_{\sigma,\langle a\rangle,b}$
\begin{center}
\begin{tikzcd}
  \mathcal{M}(\langle a\rangle;b)\ar[r,"F"]\ar[d,"\sigma"swap]&|[alias=N]|\mathcal{N}(\langle Fa\rangle;Fb)\ar[d,"\sigma"]\\
  |[alias=M]|\mathcal{M}(\langle a\rangle\sigma;b)\ar[r,"F"swap]&\mathcal{N}(\langle Fa\rangle\sigma;Fb).
  \arrow[Rightarrow,from=M, to=N,shorten >=4mm,shorten <=4mm,"F_{\sigma,\langle a\rangle,b}"swap,"\cong"]
\end{tikzcd}
\end{center}
When $\langle a\rangle$ and $b$ are clear from the context we write simply $F_\sigma,$ and if $f\colon \la a\ra\to b $ we will denote by $F_{\sigma,\la a\ra,b;f}=F_{\sigma;f}\colon F(f\sigma)\to F(f)\sigma$ the 2-cell in $\Nn(\la Fa\ra\sigma;Fb)$ corresponding to the component of $F_\sigma$ at $f.$  
\end{itemize}
These data are subject to the same axioms of unit and composition preservation as a symmetric $\mathbf{Cat}$-multifunctor, but we replace the symmetric group action preservation axiom by the following four axioms.
\begin{itemize}
    \item \textbf{Unit permutation:} Let $n\geq 0,$ $\la a\ra\in\ob(\M)^n$ and $b\in\ob(\M),$ then
       \[ F_{\text{id}_n,\langle a\rangle,b}=1_F.\]
    \item \textbf{Product permutation:} Let $n\geq 0,$ $\la a\ra\in\ob(\M)^n,$ $b\in\ob(M)$ and $\sigma,\tau \in \Sigma_n.$ Then, the following pasting digrams are equal.
    \begin{center}
\begin{tikzcd}
	{\M (\la a\ra ;b)} & {\Nn (\la Fa\ra ;Fb)} &[-10pt]&[-20pt] {\M (\la a\ra ;b)} & {\Nn (\la Fa\ra ;Fb)} \\
	{\M (\la a\ra\sigma ;b)} & {\Nn (\la Fa\ra\sigma ;Fb)} & {=} \\
	{\M (\la a\ra\sigma\tau ;b)} & {\Nn (\la Fa\ra\sigma\tau ;Fb)} && {\M (\la a\ra\sigma\tau ;b)} & {\Nn (\la Fa\ra\sigma\tau ;Fb).}
	\arrow["F", from=1-1, to=1-2]
	\arrow["\sigma"', from=1-1, to=2-1]
	\arrow["\sigma", from=1-2, to=2-2]
	\arrow["F", from=1-4, to=1-5]
	\arrow["{\sigma\tau}"', from=1-4, to=3-4]
	\arrow["{\sigma\tau}", from=1-5, to=3-5]
	\arrow["{F_\sigma}"', between={0.3}{0.7}, Rightarrow, from=2-1, to=1-2]
	\arrow["F"', from=2-1, to=2-2]
	\arrow["\tau"', from=2-1, to=3-1]
	\arrow["\tau", from=2-2, to=3-2]
	\arrow["{F_{\tau}}"', between={0.3}{0.7}, Rightarrow, from=3-1, to=2-2]
	\arrow["F"', from=3-1, to=3-2]
	\arrow["{F_{\sigma\tau}}"', between={0.4}{0.6}, Rightarrow, from=3-4, to=1-5]
	\arrow["F"', from=3-4, to=3-5]
\end{tikzcd}
    \end{center}
    \item \textbf{Top equivariance:} For every $c\in\ob(\M),$ $n\geq 0,$ $\la b \ra =\la b_j\ra_{j=1}^n\in \ob(M)^n,$ $k_j\geq 0$ for $1\leq j\leq n,$   and $\la a_j\ra=\la a_{j,i}\ra_{i=1}^{k_j}\in \ob(\M)^{k_j}$ for $1\leq j \leq n,$ and $\sigma\in \Sigma_n,$ the pasting
\begin{center}
\begin{tikzcd}
	{\M (\la b\ra ;c)\times \prod\limits_{j=1}^n\M (\la a_j\ra ;b_j)} & {\Nn (\la Fb\ra ;Fc)\times \prod\limits_{j=1}^n\Nn (\la Fa_j\ra ;Fb_j)} \\
	{\M (\la\la a_j\ra\ra_{j=1}^n ;c)} & {\Nn (\la\la Fa_j\ra\ra_{j=1}^n ;Fc)} \\
	{\M (\la\la a_{\sigma(j)}\ra\ra_{j=1}^n; c)} & {\Nn (\la\la Fa_{\sigma(j)}\ra\ra_{j=1}^n; Fc)}
	\arrow["{\prod F}", from=1-1, to=1-2]
	\arrow["\gamma"', from=1-1, to=2-1]
	\arrow["\gamma", from=1-2, to=2-2]
	\arrow["F"', from=2-1, to=2-2]
	\arrow["{\sigma\la \id_{k_{\sigma(j)}}\ra}"', from=2-1, to=3-1]
	\arrow["{\sigma\la \id_{k_{\sigma(j)}}\ra}", from=2-2, to=3-2]
	\arrow["{F_{\sigma\la\id_{k_{\sigma(j)}}\ra}}"', between={0.3}{0.7}, Rightarrow, from=3-1, to=2-2]
	\arrow["F"{description}, from=3-1, to=3-2]
\end{tikzcd}
\end{center}
equals
    \begin{center}
\begin{tikzcd}
	{\M (\la b\ra ;c)\times \prod\limits_{j=1}^n\M (\la a_j\ra ;b_j)} & {\Nn (\la Fb\ra ;Fc)\times \prod\limits_{j=1}^n\Nn (\la Fa_j\ra ;Fb_j)} \\
	{\M (\la b\ra \sigma ;c)\times \prod\limits_{j=1}^n\M (\la a_{\sigma(j)}\ra ;b_{\sigma(j)})} & {\Nn (\la Fb\ra \sigma ;Fc)\times \prod\limits_{j=1}^n\Nn (\la Fa_{\sigma(j)}\ra ;Fb_{\sigma(j)})} \\
	{\M (\la\la a_{\sigma(j)}\ra\ra_{j=1}^n; c)} & {\Nn (\la\la Fa_{\sigma(j)}\ra\ra_{j=1}^n; Fc).}
	\arrow["{\prod F}", from=1-1, to=1-2]
	\arrow["{\sigma\times \sigma^{-1}}"', from=1-1, to=2-1]
	\arrow["{\sigma\times \sigma^{-1}}", from=1-2, to=2-2]
	\arrow["{F_\sigma \times 1}"', between={0.3}{0.7}, Rightarrow, from=2-1, to=1-2]
	\arrow["{\prod F}"', from=2-1, to=2-2]
	\arrow["\gamma"', from=2-1, to=3-1]
	\arrow["\gamma", from=2-2, to=3-2]
	\arrow["F"', from=3-1, to=3-2]
\end{tikzcd}
\end{center}
Here $\sigma\la \id_{k_{\sigma(j)}}\ra=\sigma\la\id_{k_{\sigma(1)}},\dots,\id_{k_\sigma(n)}\ra.$  
\item \textbf{Bottom Equivariance:} For every $c\in\ob(\M),$ $n\geq 0,$ $\la b \ra =\la b_j\ra_{j=1}^n\in \ob(M)^n,$ $k_j\geq 0$ for $1\leq j\leq n,$   and $\la a_j\ra=\la a_{j,i}\ra_{i=1}^{k_j}\in \ob(\M)^{k_j}$ for $1\leq j \leq n$ and $1\leq i \leq k_j,$ and $\tau_j \in \Sigma_{k_j},$ the following two diagram
\begin{center}
\begin{tikzcd}
	{\M (\la b\ra ;c)\times \prod\limits_{j=1}^n\M (\la a_j\ra ;b_j)} & {\Nn (\la Fb\ra ;Fc)\times \prod\limits_{j=1}^n\Nn (\la Fa_j\ra ;Fb_j)} \\
	{\M (\la\la a_j\ra\ra_{j=1}^n ;c)} & {\Nn (\la\la Fa_j\ra\ra_{j=1}^n ;Fc)} \\
	{\M (\la\la a_j\ra\tau_j\ra_{j=1}^n; c)} & {\Nn (\la\la Fa_j\ra\tau_j\ra_{j=1}^n; Fc)}
	\arrow["{\prod F}", from=1-1, to=1-2]
	\arrow["\gamma"', from=1-1, to=2-1]
	\arrow["\gamma", from=1-2, to=2-2]
	\arrow["F"', from=2-1, to=2-2]
	\arrow["{\id_n \la \tau_j\ra}"', from=2-1, to=3-1]
	\arrow["{\id_n\la \tau_j\ra}", from=2-2, to=3-2]
	\arrow["{F_{\id_n \la \tau_j\ra}}"', between={0.3}{0.7}, Rightarrow, from=3-1, to=2-2]
	\arrow["F"', from=3-1, to=3-2]
\end{tikzcd}
\end{center}
equals
\begin{center}
\begin{tikzcd}
	{\M (\la b\ra ;c)\times \prod\limits_{j=1}^n\M (\la a_j\ra ;b_j)} & {\Nn (\la Fb\ra ;Fc)\times \prod\limits_{j=1}^n\Nn (\la Fa_j\ra ;Fb_j)} \\
	{\M (\la b\ra ;c)\times \prod\limits_{j=1}^n\M (\la a_j\ra\tau_j ;b_j)} & {\Nn (\la Fb\ra ;Fc)\times \prod\limits_{j=1}^n\Nn (\la Fa_j\ra\tau_j ;Fb_j)} \\
	{\M (\la\la a_j\ra\tau_j\ra_{j=1}^n; c)} & {\Nn (\la\la Fa_j\ra\tau_j\ra_{j=1}^n; Fc).}
	\arrow["{\prod F}", from=1-1, to=1-2]
	\arrow["{1\times \prod\tau_j}"', from=1-1, to=2-1]
	\arrow["{1\times \prod\tau_j}", from=1-2, to=2-2]
	\arrow["{1\times\prod F_{\tau_j}}"', between={0.3}{0.7}, Rightarrow, from=2-1, to=1-2]
	\arrow["F"', from=2-1, to=2-2]
	\arrow["\gamma"', from=2-1, to=3-1]
	\arrow["\gamma", from=2-2, to=3-2]
	\arrow["F"', from=3-1, to=3-2]
\end{tikzcd}
\end{center}
The domain and codomain of the pasting diagrams in the Top and Bottom Equivariance axioms are equal by top and bottom equivariance for $\M$ and $\N.$ The preservation of $\gamma$ by $F$ guarantees that the empty squares commute.
\end{itemize}
\end{defi}
\printbibliography

\end{document}